\documentclass[11pt]{amsart}
 \textwidth=5in
\textheight=7.5in

\usepackage{graphicx,color}
\usepackage[left=1in,right=1in,top=1in,bottom=1in]{geometry}
\usepackage{latexsym}
\usepackage{cite}
\usepackage{amsmath,amssymb,amsthm,mathrsfs}

\allowdisplaybreaks[1]  

\oddsidemargin = 0 in
\evensidemargin = 0 in
\topmargin = 0.0 in
\headheight = 0.08 in
\headsep = 0.2 in  
\parskip = 0.0 in
\parindent = 0.2in



\newtheorem{theorem}{Theorem}

\newtheorem{lemma}[theorem]{Lemma}
\newtheorem{defn}[theorem]{Definition}
\newtheorem{rem}[theorem]{Remark}

\renewcommand{\phi}{\varphi}

\let\epsilon=\varepsilon


\newcommand{\ppt}{\frac{\partial}{\partial t}}

\def\crn#1#2{{\vcenter{\vbox{
\hbox{\kern#2pt \vrule width.#2pt height#1pt
 }
\hrule height.#2pt}}}}

\newcounter{mnotecount}[section]
\let\oldmarginpar\marginpar
\setlength{\marginparwidth}{0.8in}
\renewcommand\marginpar[1]{\-\oldmarginpar[\raggedleft\footnotesize #1]%
{\raggedright\footnotesize #1}}

\begin{document}

%
%
%
%
%
%
%
%
%

\title{Scaling and Entropy for the RG-$2$ Flow}

\author[M. Carfora]{Mauro Carfora}

\address[Department of Physics, University of Pavia]{University of Pavia} 
\address[GNFM and INFN]{Italian National Group of Mathematical Physics, and INFN Pavia Section}
\email{mauro.carfora@unipv.it}

\author[C.Guenther]{Christine Guenther}

\address[Pacific University]{Pacific University} 

\email{guenther@pacificu.edu}

\thanks{This work was partially supported under a GNFM visiting professorship grant and Simons Foundation Collaboration Grant 283083}



\date{24 May 2018}

\begin{abstract}

Let $(M,g)$ be a closed Riemannian manifold. The \emph{second order approximation} to the perturbative renormalization group flow for the nonlinear sigma model (RG-2 flow) is given by :

\begin{equation*}
\label{standardRG2}
\frac{\partial }{\partial t} \, g(t) \, =\,  -2 \mathrm{Ric}(t) \, -\, \frac{\alpha}{2}%
\mathrm{Rm}^2(t),\newline
\end{equation*}
where
$ g  = \mathrm{Riemannian  \ metric}, 
\mathrm{Ric} = \mathrm{Ricci \ curvature, } \
\mathrm{Rm}^2_{ij}:=\mathrm{R}_{irmk}\mathrm{R}_j^{rmk},$
and $\alpha\ge 0$ is a parameter. The flow is invariant under diffeomorphisms, but not under scaling of the metric. 
We first develop a \emph{geometrically defined} coupling constant $\alpha_g$ that leads to an equivalent, scale-invariant flow. 
We further find a modified Perelman entropy for the flow, and prove local existence of the resulting variational system. The crucial idea is to modify the flow by two diffeomorphisms, the first being the usual DeTurck diffeomorphism the second being strictly related to the geometrical characterization of the coupling constant $\alpha_g$. We minimize the entropy functional so introduced to characterize a natural extension $\Lambda[g]$ of  the  Perelman's $\lambda(g)$--functional, and show that $\Lambda[g]$ is monotonic  under the RG-2 flow.
Although the modified Perelman entropy is monotonic, the RG-2 flow is not a gradient flow with respect this functional. We discuss this issue in detail, showing how to deform the functional in order to give rise to a gradient flow for a DeTurck modified RG-2 flow. 
\end{abstract}

\maketitle

\section{INTRODUCTION: A SCALE-INVARIANT RG-2 FLOW}
\label{Introduction}
The  RG-2 flow (see e.g. \cite{VB}, \cite{cremaschi}, \cite{GGI2}, \cite{GGI3}, \cite{GO},  \cite{oliy09})  is the geometric flow associated with the two--loop (\emph{i.e.} 
second order) approximation to the perturbative renormalization group flow for nonlinear sigma models \cite{mc},  \cite{DanAnnPhys}\, 
\cite{gawedzki}   given by  

\begin{equation} 
\begin{tabular}{l}
$\frac{\partial }{\partial t }g_{ij}(t )=\,-\,2\mathrm{Ric}_{ij}(t )\,-\,\frac{\alpha}{2}\,\mathrm{Rm}^2_{ij}(t),$ \\ 
\\ 
$g_{ab}(t =0)=g_{ab}$\;,%
\end{tabular}
   \label{RG2flow}
\end{equation}
where $\mathrm{Ric}(t )$, $\mathrm{Rm}(t )$ denote the Ricci and the Riemann  tensor of the evolving metric $g(t )$, and 
\begin{equation}
\mathrm{Rm}^2_{ij}(t)\,:=\,\mathrm{Rm}_{\;iklm}(t)\mathrm{Rm}_{\;jpqr}(t)g^{kp}(t)g^{lq}(t)g^{mr}(t)\;.
\end{equation} 
Note that the fixed parameter $\alpha\,\geq 0$ in (\ref{RG2flow}) is dimensionful (it has dimension of a length squared, \emph{i.e.} $[\alpha]\,=\,[L^2]$) and is typically assumed to be unrelated to the geometry. This immediately implies that the system of partial differential equations (\ref{RG2flow}) is not invariant under scalings of the metric: if $g \rightarrow \lambda g$,\, $\lambda\,\in\,\mathbb{R}_{>0}$\,,\, then $\mathrm{Rc}(\lambda g) = \mathrm{Rc}(g),$ but $\mathrm{Rm}^2(\lambda g) = \frac{1}{\lambda} \mathrm{Rm}^2(g)$, and consequently
\begin{equation}
2\mathrm{Ric}(\lambda g )\,+\,\frac{\alpha}{2}\,\mathrm{Rm}^2(\lambda g)\,=\,
2\mathrm{Ric}(g )\,+\,\frac{\alpha}{2}\,\lambda^{-1}\,\mathrm{Rm}^2(g)\;.
\end{equation}
 This is at variance with what happens for the Ricci flow, where one has manifestly parabolic space and time scaling symmetry, which are of basic importance in the geometric applications of the theory.  The lack of scaling invariance  is a source of a number of delicate problems in the analysis of  the RG-2 flow. This  is already evident when dealing with the condition assuring its (weak)--parabolicity, according to which the flow exists (and is parabolic) provided that \cite{GGI2}, \cite{cremaschi}, \cite{oliy09} 
  \begin{equation}
\label{parcond0}
1\,+\,\alpha\,\mathcal{K}_{(g)}[\sigma]\,>\,0\;,\;\;\;\;\;\forall \,\sigma\in Gr_{(2)}(TM)\;,
\end{equation}
where $\mathcal{K}_{(g)}[\sigma]$ denotes the sectional curvature of the initial $(M, g)$ along the plane $\sigma\in Gr_{(2)}(TM)$, and $Gr_{(2)}(TM)$ is the Grassmannian of 2-planes in $TM$;  however, under the scaling action $g \rightarrow \lambda g$ we get 
\begin{eqnarray}
\label{scalInv}
\mathcal{K}_{(\lambda g)}[\sigma(X,Y)]\,=\,\lambda^{-\,1}\,\mathcal{K}_{(g)}[\sigma(X,Y)]\;.
\end{eqnarray}  
It follows that if we assume that the condition (\ref{parcond0}) holds for the manifold $(M, g)$,  then on the rescaled manifold  $(M, \lambda g)$ the analogous condition may easily fail as soon as $\mathcal{K}_{(g)}[\sigma]\,< 0$ and $\lambda$ is small enough. \noindent The fact that (weak)--parabolicity of (\ref{RG2flow}) depends on the \emph{size} of the manifold is a somewhat unsatisfactory feature. One may argue that from a PDE point of view this behavior cannot be formally ruled out; nonetheless, one would like a deeper rationale for the fact that a geometric flow, driven by local curvatures, changes nature abruptly as a function of the  \emph{overall size} of the manifold. Moreover, the characterization of  the coupling $\alpha$ as a quantity unrelated to geometry is physically unjustified from the point of view of  the perturbative renormalization group for nonlinear sigma model, where one is forced to attribute the role of true coupling parameter to the normalized metric $\alpha^{-1}\,g$. This latter remark is made quite clear in D. Friedan's foundational paper  (see \cite{DanAnnPhys} pp.324 ) ,  where  (referring to the parameter $\alpha$ as a temperature $T$)  he stresses that  "\textit{... The temperature  $T$ in the coupling $T^{-1}g_{ij}$ is not a separate parameter. Multiplying $T$ by a positive constant $c$ while multiplying $g_{ij}$ by $c^{-1}$ leaves the coupling unchanged. The temperature is written separately only to make the expansion parameter visible and appears only in the combination $(Tg^{-1})^{ij}$. 
 ...}".  It must be said that, even if physically motivated, it is difficult to implement  Friedan's remark in the geometric flow framework associated with (\ref{RG2flow}), in particular if we want to preserve the locality requirements underlying quantum field theory.  
 
 Even if we put the locality requirements set by quantum theory aside, from the geometric point of view we need a natural mechanism that forces the rescaling of $\alpha$ along with  the rescaling of the metric, and this cannot be implemented with the RG-2 flow  as it stands. The most obvious candidate for such a mechanism, \emph{i.e.} setting $\left(\alpha\right)^{\frac{n}{2}}\,:=\,\int_M\,d\mu_g$, where  $d\mu_g$ is the Riemannian volume element, is not a viable prescription since along (\ref{RG2flow})  the Riemannian volume is not constant. To develop  a  solution to this problem, we exploit a natural variant of the Perelman's strategy \cite{Perelman}  by  introducting along the RG-2 flow a reference  measure, and associating to it  a \emph{geometrically defined} coupling constant $\alpha_{g}$. 
  
  More precisely, 
let $(M, g,\,d{\omega}(g))$ be a closed $n$--dimensional Riemannian manifold \,($n\geq 3$)\,with density \cite{grigoryan}, \cite{gromov}, \emph{i.e.} a smooth orientable manifold without boundary, endowed with a Riemannian metric $g$ and a Borel  measure $d{\omega}(g)$ that is absolutely continuous with respect to the Riemannian volume element $d\mu_g$. We set $d{\omega}(g)\,=\,e^{\,-{f}}\,d\mu_g$ for some smooth function ${f}\,\in\,C^\infty(M, \mathbb{R})$, and denote by  
\begin{equation}
\label{alphaG}
\left(\alpha_g\right)^{\frac{n}{2}}\,:=\,\int_M\,d{\omega}(g)\;
\end{equation}
the total $d{\omega}(g)$--mass of $(M, g)$, and by
\begin{equation}
\label{normomega}
d\widehat{\omega}(g)\,:=\,\left(\alpha_g\right)^{\,-\,\frac{n}{2}}\,d{\omega}(g)\,=\,\left(\alpha_g\right)^{\,-\,\frac{n}{2}}\,e^{\,-{f}}\,d\mu_g\;,\;\;\;\;\;\int_M\,d\widehat{\omega}\,=\,1\;,
\end{equation}
the associated probability measure $d\widehat{\omega}(g)$. Note that under the metric rescaling $g\,\longmapsto \,\lambda\,g$, \,$\lambda\,\in\,\mathbb{R}_{>0}$,\, the parameter $\alpha_g$, associated with $(M, g,\,d{\omega}(g))$ scales according to $\alpha_{\lambda g}\,=\,\lambda\,\alpha_g$.  Moreover, $\alpha_g$  is defined  up to the gauge transformation  
\begin{eqnarray}
\label{gauge1}
d\omega\,\longmapsto \,d\widetilde{\omega}\,&:=&\,d\omega\,+\,\alpha_g\,\mathrm{L}_{\xi_g}\,d\omega\\
&=&\left( 1\,+\, \alpha_g\,\mathrm{div}^{(\omega)}\,{\xi_g} \right)\,d\omega\nonumber\;,
\end{eqnarray}
where ${\xi_g}\,\in\,C^\infty(M, TM)$ is, as we shall see, a smooth gradient vector field, \textit{i.e.}, if  $g^{-1}$ denotes the inverse metric on $TM^*$,  $\xi_g\,=\,g^{-1}(d\psi,\cdot )$ for some smooth function $\psi$. (The notation ${\xi_g}$  emphasizes the metric dependence  induced by the gradient nature of ${\xi_g}$; also note that ${\xi_g}$ has the dimension of an inverse length, \emph{i.e.}\, $[\xi_g]\,=\,[L^{\,-1}]$). 
The weighted divergence operator $\mathrm{div}^{(\omega)}$ introduced in the gauge transformation (\ref{gauge1})  is characterized in terms of the Lie derivative $\mathrm{L}_{\xi_g}\,d\omega$ of $d\omega$ along  ${\xi_g}$ according to
\begin{equation}
\label{Wlie}
\mathrm{L}_{\xi_g}\,d\omega\,=\,\mathrm{div}^{(\omega)}{\xi_g}\,d\omega\;,
\end{equation}
or, in index notation,
\begin{equation}
\label{Wlieindex}
\mathrm{div}^{(\omega)}{\xi_g}\, =\,\nabla^{(\omega)}_k{\xi_g}^k\,:=\,\left(\nabla_k\,-\,\nabla_k f  \right){\xi_g}^k\;. 
\end{equation}
Hence, $\int_M\,d\widetilde{\omega}\,=\,\int_M\,\left( 1\,+\, \alpha_g \mathrm{div}^{(\omega)}\,{\xi_g} \right)\,d\omega\,=\,\int_M\,d\omega\,=\,\left(\alpha_g\right)^{\frac{n}{2}}$. 

In the analysis that follows, it is useful to keep track of the  gauge freedom  (\ref{gauge1})  by introducing the following characterization of the scale invariant  RG--$2$ flow. 
\begin{defn} (The scale--invariant RG--$2$ flow).\\
\label{SIRG2}
Let $[0,1] \ni t\longmapsto {\xi_{g(t)}}\in C^\infty(M, TM)$ be a given choice of a possibly $ t$--dependent vector field 
on $(M, g, d\omega)$. The scale--invariant RG--$2$ flow associated with the Riemannian manifold with density  $(M, g,\,d{\omega}(g))$  is 
 \begin{equation} 
	 \label{out1}
\frac{\partial }{\partial  t }g_{ij}( t)\,=\,-\,2\mathrm{Ric}_{ij}( t )\,-\,\frac{\alpha_g}{2}\,\mathrm{Rm}^2_{ij}( t)\;,\;\;\;\;\;g_{ij}( t =0)=g_{ij}\;, 
\end{equation}
coupled to the backward Fokker--Planck equation  describing the (backward) diffusion of the measure $d\omega(t)$ in presence of the drift generated by the given time--dependent  vector field ${\xi_{g(t)}}$,  
\begin{equation}
\label{out1b}
\frac{\partial}{\partial  t}\,d{\omega}( t)\,=\,-\,\Delta_{g( t)}\,d{\omega}( t)\,-\,
\mathrm{div}^{(\omega)}{\xi_{g(t)}}\,d\omega( t)\;,
\end{equation}
where  $\Delta_{g( t)}$ denotes the Laplace--Beltrami operator associated with the evolving metric $g( t )$ and $d\omega(t)$ is a shorthand notation for $d\omega[g(t)]$. 
\end{defn}

Note that when we couple (\ref{out1b})  to  (\ref{out1}), we can write $\alpha_{g(t)}$ in place of $\alpha_{g}$ since  under the flows   (\ref{out1}) and (\ref{out1b}) the coupling parameter $\alpha_{g(t)}$ remains constant.  Explicitly,  let us  introduce the $d{\omega}$--weighted Laplacian  \cite{grigoryan} on $(M, g,\,d{\omega})$ according to 
\begin{equation}
\label{weightLap}
\Delta_{g} ^{({\omega})}\phi\,:=\,
\Delta_{g}\,\phi\,-\,g^{ik}\nabla_i{f}\nabla_k \phi\;,\;\;\;\;\phi\in C^\infty(M, \mathbb{R})\;.
\end{equation} 
From the relation
\begin{equation}
\label{Wlap}
\Delta_g \,d\omega\,=\,\Delta_{g}\,\left(e^{\,-f} \,d\mu_{g}\right)\,=\,-\,\left(\Delta_g f - |\nabla f|_g^2 \right)\,e^{\,-f}d\mu_g\,
=:\,-\,\Delta _g^{(\omega)} f\,d\omega\;,
\end{equation}
we compute
\begin{eqnarray}
\frac{d}{d t}\,\left(\alpha_{g( t)}\right)^{\frac{n}{2}}\,&=&\,\frac{d}{d t}\,\int_M\,d\omega( t)\,=\,
\int_M\,\left[-\,\Delta_{g( t)}\,d{\omega}( t)\,-\,
\mathrm{div}^{(\omega)}{\xi}_{g(t)}\,d\omega( t)  \right]\nonumber\\
\label{alfac}\\
&=&\,-\,\int_M\,\Delta_{g( t)}\,(d{\omega}( t))\,=\,-\,\int_M\,\mathrm{L}_{\nabla\,f}\,d\omega( t)\,=\,0\nonumber\;.
\end{eqnarray}
In a sense, the evolution (\ref{out1b})  is a time--dependent version of the gauge transformation  (\ref{gauge1}).  Its  Fokker--Planck nature immediately follows since  the weighted divergence term  $\mathrm{div}^{(\omega)}{\xi}_{g(t)}\,d\omega( t)$  in  (\ref{out1b}) can be locally rewritten as $\nabla_k\left(\xi_g^k(t)\,d\omega(t)\right)$ which
 allows us to interpret  the  given time--dependent  vector field ${\xi_{g(t)}}$ as the generator of   a  drift acting on  
the (backward) diffusion of the measure $d\omega(t)$. The first set of results we present in this paper
are the local existence for the initial value problem associated with the coupled system (\ref{out1}) and (\ref{out1b}), and the characterization of its scaling properties. The second set of results concerns the proof of existence of a  monotonic functional $\mathcal{F}({g}(t), {{f}(t)}, {\xi}_g(t))$ (see \eqref {extPerelman}), which plays for the RG-2 flow the same role  that Perelman's $\mathcal{F}$--energy \cite{Perelman} has in standard Ricci flow theory. By minimizing  $\mathcal{F}({g}(t), {{f}(t)}, {\xi}_g(t))$ over the auxiliary fields $f$ and $\xi_g$ we are able to connect $\mathcal{F}({g}(t), {{f}(t)}, {\xi}_g(t))$  to a natural extension of  Perelman's $\lambda(g)$--functional,  and prove that this extended geometric functional,  $\Lambda[g]$, is monotonic along the RG--2 flow. The main theorems are Theorem  \ref{ThBB},  Theorem  \ref{ThBB2} and  Theorem \ref{Entropy}. These results were announced in \cite{G}.

\section{LOCAL EXISTENCE AND SCALE INVARIANCE}
 Local existence for the initial value problem associated with the coupled system
 (\ref{out1}) and (\ref{out1b}) directly follows from an obvious adaptation of the conditions  \cite{cremaschi}, \cite{GGI2} for weak--parabolicity for the standard RG-2 flow \eqref{RG2flow}. However, at variance with respect to its non--scaling behavior, we now have manifestly parabolic space and time scaling symmetry.  For the sake of notational clarity in addressing these scaling properties, in this section we shall explicitly write $d\omega[g(t)]$ in place of the shorthand notation $d\omega(t)$ we have been using.  
\begin{theorem}
\label{ThA}
Let $(M, g,\,d{\omega}[g])$ be a closed $n$--dimensional Riemannian manifold \,($n\geq 3$)\,with density, and denote by $Gr_{(2)}(TM)$ the Grassmannian of $2$--planes in $TM$. If the parameter 
$\alpha_g$ and the initial metric $g$ are such that
\begin{equation}
\label{parcond}
1\,+\,\alpha_g\,\mathcal{K}_{P}(M,g)\,>\,0\;,\;\;\;\;\;\forall \,P\in Gr_{(2)}(TM)\;,
\end{equation}
where $\mathcal{K}_{P}(M,g)$ is the sectional curvature of $(M,g)$ along the plane $P\in Gr_{(2)}(TM)$, then the initial value problem associated with (\ref{out1}) is weakly-parabolic, and there exists a unique solution
\begin{equation}
\left(t, g\right)\,\longmapsto\, g(t)\;,
\end{equation}
on some time interval $[0,\,T)$. Let $T_0\,<\,T$ and set $\eta\,:=\,T_0\,-\,t$. Then along the time--reversed flow $\eta\,\longmapsto \,g(\eta)$,\;$\eta\,\in\,[0, T_0]$,\, the evolution (\ref{out1b}) of the  measure $\eta\longmapsto d{\omega}[g(t=T_0-\eta)]\,=\,e^{\,-{f(\eta)}}\,d\mu_{g(\eta)}$, in the gauge defined by the chosen gradient vector field $\eta\longmapsto {\xi}_{g(\eta)}\,=\,\nabla_{g(\eta)}\,\psi(\eta)$, is governed by the Fokker--Planck equation 
\begin{equation}
\label{omegaheat}
\frac{\partial}{\partial\eta}\,d{\omega}[g(\eta)]\,=\,\Delta_{g(\eta)}\,d{\omega}[g(\eta)]\,+\,
 \mathrm{div}^{(\omega)}{\xi}_{g(\eta)}\,d\omega[g(\eta)]\;.
\end{equation}
The resulting  evolution $[0, T_0]\,\ni \,t\,\longmapsto (g(t),\,d{\omega}[g(T_0- t)])$ induces on the solution space of (\ref{out1}) and (\ref{out1b}) the parabolic space and time scaling symmetry
\begin{equation}
\label{scalingRG2}
\left(g(t),\, \xi_{g(t)},\, d{\omega}[g(T_0-t)] \right)\,\longmapsto \,
\left(\lambda\,g\left({t}/{\lambda} \right),\,\xi_{\lambda\,g(t/\lambda)},\,
d{\omega}\left[\lambda\,g\left({(T_0-t)}/{\lambda} \right)\right]   \right)\;,
\end{equation} 
for $t\,\in\,[0, T_0]$, and $\forall \,\lambda\,\in\,\mathbb{R}_{>0}$.
\end{theorem}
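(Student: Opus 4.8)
The plan is to establish the three assertions --- weak parabolicity and short--time existence of $g(t)$, the forward Fokker--Planck form \eqref{omegaheat} of the measure evolution, and the parabolic scaling symmetry \eqref{scalingRG2} --- essentially independently, the key structural fact being that coupling \eqref{out1b} to \eqref{out1} freezes the coupling constant. First I would record, as in the computation leading to \eqref{alfac}, that $\frac{d}{dt}\int_M d\omega(t)=0$ along the coupled system, so that $\alpha_{g(t)}=\bigl(\int_M d\omega(t)\bigr)^{2/n}\equiv\alpha_g$ is constant. Hence the metric equation \eqref{out1} is \emph{literally} the classical RG-$2$ flow \eqref{RG2flow} with the fixed value $\alpha=\alpha_g$, and the DeTurck--trick and weak--parabolicity analysis of \cite{cremaschi}, \cite{GGI2} (see also \cite{oliy09}) applies verbatim: condition \eqref{parcond0} with $\alpha=\alpha_g$ is exactly \eqref{parcond}, under which the DeTurck--modified flow is strictly parabolic, and solving it and then undoing the DeTurck diffeomorphism produces the unique solution $t\mapsto g(t)$ on a maximal interval $[0,T)$.

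Next, fix $T_0<T$, set $\eta=T_0-t$, and treat $g$ as the smooth solution just produced. Since $\frac{\partial}{\partial\eta}=-\frac{\partial}{\partial t}$, equation \eqref{out1b} is term by term the \emph{forward} parabolic equation \eqref{omegaheat}; to see it is well posed I would write $d\omega[g(\eta)]=e^{-f(\eta)}d\mu_{g(\eta)}$ and combine \eqref{Wlap} with the evolution of the volume element $d\mu_{g(\eta)}$ along \eqref{out1}, which turns \eqref{omegaheat} into a uniformly parabolic scalar equation for $f$ with smooth $\eta$--dependent coefficients on the compact interval $[0,T_0]$. Standard linear parabolic theory then yields a unique solution (positive when the data are) for prescribed data at $\eta=0$, while integrating \eqref{omegaheat} over $M$ recovers $\int_M d\omega[g(\eta)]=(\alpha_g)^{n/2}$, consistently with \eqref{alfac}.

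Finally, for \eqref{scalingRG2} I would separate a pure space rescaling from the parabolic time rescaling. Under $g\mapsto\lambda g$ the Levi--Civita connection and the $(1,3)$--curvature tensor are scale invariant, so $\mathrm{Ric}(\lambda g)=\mathrm{Ric}(g)$, whereas lowering three indices to form $\mathrm{Rm}^2$ gives $\mathrm{Rm}^2(\lambda g)=\lambda^{-1}\mathrm{Rm}^2(g)$; together with $\alpha_{\lambda g}=\lambda\,\alpha_g$ this makes the driving tensor of \eqref{out1}, namely $2\mathrm{Ric}+\frac{\alpha_g}{2}\mathrm{Rm}^2$, exactly scale invariant, so differentiating $\lambda g(t/\lambda)$ in $t$ shows $t\mapsto\lambda g(t/\lambda)$ again solves \eqref{out1}. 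For the companion measure one notes that under $g\mapsto\lambda g$ the density $f$ is unchanged (both $\int_M d\omega$ and $d\mu_g$ scale by $\lambda^{n/2}$), that $\Delta_{\lambda g}=\lambda^{-1}\Delta_g$, and that $\xi_g=\nabla_g\psi$ and $\mathrm{div}^{(\omega)}\xi_g$ pick up the matching factor $\lambda^{-1}$ forced by $[\xi_g]=[L^{-1}]$, so that these $\lambda^{-1}$'s are absorbed exactly by the time rescaling $\eta\mapsto\eta/\lambda$ in \eqref{omegaheat}; reassembling, and checking that the space and time rescalings commute, yields \eqref{scalingRG2}. I expect the main (though purely bookkeeping) obstacle to be exactly this last verification --- fixing consistently how $\xi_g$, hence $\mathrm{div}^{(\omega)}\xi_g$, transforms under $g\mapsto\lambda g$, and confirming that the parabolic time rescaling compensates the $\lambda^{-1}$ coming from $\Delta_{\lambda g}$ and from $\mathrm{Rm}^2(\lambda g)$ exactly; everything else is either a citation (existence of $g(t)$) or a one--line sign flip (the passage from \eqref{out1b} to \eqref{omegaheat}).
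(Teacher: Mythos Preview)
Your proposal is correct and follows the same three--part architecture as the paper: cite the weak--parabolicity/existence results of \cite{cremaschi}, \cite{GGI2} with $\alpha=\alpha_g$ (constant by \eqref{alfac}), flip the sign of $t$ to turn \eqref{out1b} into the forward parabolic \eqref{omegaheat}, and verify \eqref{scalingRG2} by tracking how $\mathrm{Ric}$, $\mathrm{Rm}^2$, $\alpha_g$, $\Delta_g$, $\xi_g=\nabla_g\psi$ and $d\omega$ transform under $g\mapsto\lambda g$ and then absorbing the residual $\lambda^{-1}$'s into $t\mapsto t/\lambda$.

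The one substantive piece the paper includes that you do not is a weighted Helmholtz decomposition on $(M,g,d\omega)$ (via the $L^2(d\omega)$--adjoint of $\nabla$ and the Otto parametrization of $T_{\widehat\omega}\mathrm{Prob}_{ac}(M,g)$) to \emph{justify} that the drift $\xi_g$ may always be taken to be a gradient $\nabla_g\psi$; this was promised in the introduction and is what makes the scaling law $\xi_{\lambda g}=\lambda^{-1}\xi_g$ legitimate rather than an extra hypothesis on the vector field. Since the theorem as stated already stipulates the gradient gauge, your omission is not a logical gap, but the paper treats this reduction as part of the proof, and your appeal to the dimensional heuristic $[\xi_g]=[L^{-1}]$ is only a mnemonic for what the gradient form actually guarantees.
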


\begin{proof}
Since the coupling parameter $\alpha_g$ in  (\ref{out1}) refers to the given initial metric $g$, the proof of local existence of    (\ref{out1})  follows directly from the conditions \cite{cremaschi}, \cite{GGI2},  for weak--parabolicity for the standard RG-2 flow (\ref{RG2flow}) with $\alpha\, \equiv\, \alpha_g$.  Before proceeding with the analysis of  the  evolution (\ref{out1b}) of the  measure $t\longmapsto d\omega(t)$ along the solution
$(t, g)\,\longmapsto\,g(t)$ of  (\ref{out1}),  and of the associated   scaling properties (\ref{scalingRG2}),   we need to further explore the nature of the gauge choice associated with the drift $\xi_g$ in order to prove that it can always be assumed to be a gradient vector field, as anticipated. We start by noticing that (\ref{gauge1})  is an infinitesimal version of Moser's theorem \cite{moser}, and we can exploit the  Helmholtz decomposition  of the vector field $\xi_g$, to provide a finer resolution of the gauge freedom (\ref{gauge1}). Let us recall that the standard Helmholtz decomposition of a vector field on a Riemannian manifold $(M,g)$ is the orthogonal factorization, with respect to the standard $L^2(M,d\mu_g)$ inner product, of  a vector field into its gradient and  divergence--free part (see \cite{cantor}). To extend this factorization to the Riemannian manifold with density  $(M, g,\,d{\omega}(g))$, let $D_i\,:=\,e^{-f}\,\nabla_i$. Then for any smooth test function $\phi$ and vector field $X\,\in\,C^{\infty}(M,TM)$ we compute 
\begin{equation}
\label{weightCantor}
\int_M\,X^i\,D_i\phi\,d\mu_g\,=\,\int_M\,\left( \nabla_i(e^{-f}\,X^i) \right)\,\phi\,d\mu_g\,=\,\int_M\,e^{-f}\,\mathrm{div}^{(\omega)}\,X\,\phi\,d\mu_g\;,
\end{equation}
which shows that the formal  $L^2(M,d\mu_g)$ adjoint of $D$ is $D^*\,=\,e^{-f}\,\mathrm{div}^{(\omega)}$, or equivalently that the  formal adjoint of  $\nabla$, with respect to the $L^2(M, d\omega)$  inner product on $M$ associated to the measure $d\omega$,  is  the weighted divergence operator $\mathrm{div}^{(\omega)}$.  This latter remark and Theorem 3.12 of  \cite{cantor} directly imply that 
for any $1\,\leq\,q\,\leq\,s$, we have on the Riemannian manifold with density  $(M, g,\,d{\omega}(g))$ the weighted  Helmholtz decomposition
\begin{equation}
W_{(\omega)}^{p, q-1}(TM)\,=\,\mathrm{grad}\,\left(W_{(\omega)}^{p, q}(M) \right)\,\oplus \,\mathrm{Ker}\left(  \mathrm{div}^{(\omega)} \right)
\end{equation}
where, for $p\,>\,1$,\; $s\,>\,\frac{n}{p}\,+\,2$, \; $W_{(\omega)}^{p, s}(M)$ and  $W_{(\omega)}^{p, s}(TM)$   respectively denote the space of functions and vector fields of Sobolev class  $(p,s)$ with respect to the measure $d\omega$.   Hence, we can write
\begin{equation}
\xi_g\,=\,\nabla_g\,\psi\,+\,\xi^{\perp }_g,  
\end{equation}
where the vector field $\xi^{\perp }_{g}$ is such that  $\mathrm{div}^{(\omega)}(\xi^{\perp }_g)\,=\,0 $, and  $\nabla_g\,\psi\,:=\,g^{-1}(d\psi, \cdot )$ is the gradient of the scalar function  $\psi\,\in\, W_{(\omega)}^{p, q}(M)$ solution of the elliptic PDE
\begin{equation}
\label{ottoPDE}
\triangle ^{(\omega)}_g\,\psi\,=\, \mathrm{div}^{(\omega)}\,\xi_g\;,
\end{equation}
where  $\triangle ^{(\omega)}_g$  is the $d{\omega}$--weighted Laplacian  (\ref{weightLap})   on $(M, g,\,d{\omega})$.  It is perhaps  interesting to note that (\ref{ottoPDE}) can be interpreted as the Otto parametrization \cite{16}, \cite{17} of the tangent vectors 
\begin{equation}
\label{tangProb}
T_{\widehat{\omega}}\,\mathrm{Prob}_{ac}(M,g)\,:=\,\left\{h\,\in\,C^{\infty}(M)\,| \,\int_M\,h\,d\widehat{\omega}\,=\,0  \right\}\;,
\end{equation}
to the space of absolutely continuous probability measures $\mathrm{Prob}_{ac}(M,g)$  on $(M,g)$. It follows that the gauge transformation (\ref{gauge1})  can be equivalently rewritten as 
\begin{equation}
\label{gauge2}
d\omega\,\longmapsto \,d\widetilde{\omega}\,=\,\left( 1\,+\, \alpha_g\,\Delta ^{(\omega)}_g\,\psi \right)\,d\omega\;,
\end{equation}
in terms of the scalar function $\psi$. This gauge freedom is clearly defined up to the residual gauge characterized by the transformation $\nabla \psi\,\longmapsto \,\nabla \psi\,+\,\xi^\perp_g $, with $\mathrm{div}^{(\omega)}\,\xi^\perp_g \,=\,0$. Formally this can be justified on more sophisticated grounds by using the isomorphism between   $T_{\widehat{\omega}}\mathrm{Prob}_{ac}(M, g)$ and $C^{\infty}(M)/\mathbb{R}$.  This goes as follows.
Let  $h\,\in\,C^{\infty}(M)$\,with $\int_M\,h\,d\widehat{\omega}\,=\,0$ be the scalar function representing a tangent vector to $\mathrm{Prob}_{ac}(M, g)$ (see (\ref{tangProb})). Any such $h$ generates a gauge trasformation (\ref{gauge1})  according to
\begin{equation}
\label{gauge3}
d\widehat{\omega}\,\longmapsto \,d\widetilde{\omega}\,:=\,d\widehat{\omega}\,-\,h\,d\widehat{\omega}\;,
\end{equation}
where we have used the normalized probability measure  (\ref{normomega}) associated with $d\omega$, and the minus sign in front of $h$ is for later convenience. We can parametrize any $h\,\in\,T_{\widehat{\omega}}\mathrm{Prob}_{ac}(M, g)$ in terms of the solution $\varphi $ of the (Otto) elliptic partial differential equation
\begin{equation}
\Delta _g^{(\omega)}\,\varphi \,=\,-\,h\;,
\end{equation}
under the equivalence relation identifying any two such solutions differing by an additive constant. It is relatively easy to prove (\cite{16}  see also \cite{lott1b}) that the map so defined,
\begin{eqnarray}
T_{\omega}Prob_{ac}(M,g )\;\;\;\;\;\;\;&\longrightarrow & 
C^{\infty }(M ,\mathbb{R})/\mathbb{R}\; ,\\
h\;\;\;\;\;\;\;&\longmapsto &\;\;\;\;\; \varphi  \nonumber\;
\end{eqnarray}
is an isomorphism. Hence we can always write the general gauge trasformation (\ref{gauge3}) in the form
\begin{equation}
\label{gauge4}
d\widehat{\omega}\,\longmapsto \,d\widetilde{\omega}\,:=\,d\widehat{\omega}\,+\,\Delta _g^{(\omega)}\,\varphi \,d\widehat{\omega}\;,
\end{equation}
 which, up to the constant normalization factors related to $\alpha_g$, is exactly (\ref{gauge2}). 
\begin{rem}
According to these remarks, it follows that  we can always assume that the gauge vector field  $\xi_g$ is a gradient for some scalar function $\psi$, \textit{i.e.}, 
\begin{equation}
\xi_{g(t)}\,=\, \nabla_{g(t)}\,\psi(t)\;.
\end{equation}
However, for notational ease we do not emphasize the dependence from the potential $\psi$ since most of our evolutive equations are more easily expressed in terms of  $\xi_g$. The potential $\psi$ and the corresponding evaluation $\xi_{g}\,=\, \nabla_{g}\,\psi$ will be used only when the analysis requires it explicitly. Moreover,  by design, the prescription we adopt is to assign along $(t, g)\,\longmapsto\,g(t)$ the  vector field $\xi_g$  up to  an evolutive gauge condition of choice, (see Section \ref{sect5}),  and then solve the  evolution (\ref{out1b}) as a backward parabolic equation. Explicitly,  the local existence result for   (\ref{out1})  implies that for any $T_0\,<\,T$ we can consider the time--reversed flow 
$\eta\,\longmapsto \, g(\eta)$,\,$\eta\,:=\,T_0\,-\,t$,\, decorated by the given drift-generating time--dependent  vector field  $\xi_{g(t)}$  according to $\xi_{g(\eta)}\,:=\,\xi_{g(t\,=\,T_0-\eta)}$. The data $\left(g(\eta),\, \xi_{g(\eta)}\right)$ so defined
 characterize the  evolution (\ref{out1b}) of the  measure $d\omega$ as the solution  $\eta\longmapsto d{\omega}[g(\eta)]\,=\,e^{\,-{f(\eta)}}\,d\mu_{g(\eta)}$ of the forward parabolic Fokker--Planck equation
\begin{equation}
\label{omegaheat*}
\frac{\partial}{\partial\eta}\,d{\omega}[g(\eta)]\,=\,\Delta_{g(\eta)}\,d{\omega}[g(\eta)]\,+\,\mathrm{div}^{(\omega)}{\xi}_{g(\eta)}\,d\omega[g(\eta)]\;,
\end{equation}
 along the flow $\eta\,\longmapsto \, \left(g(\eta),\,\xi_{g(\eta)}\right)$,\,$\eta\,:=\,T_0\,-\,t$. 
\end{rem}
It is in such a framework that  the solution, 
$t\,\longmapsto \,\left( g(t),\,d\omega[g(T_0-t)]  \right)$, \,$t\,\in\,[0, T_0]$,\, of the coupled flows (\ref{out1}) and  (\ref{out1b}) exhibits a   \emph{scale invariance} as in the case of the Ricci flow.   Explicitly,
for the given gradient vector field $\xi_{g(t)}\,=\, \nabla_{g(t)}\psi(t)$,  let $\left(g,\,d\omega[g(\eta=0)],\,
\,\tilde{t}\,\right)\,\longmapsto\, \left(g(\,\tilde{t}\,) ,\,d\omega[g(T_0-\,\tilde{t}\,) ] \right)$, \;$\,\tilde{t}\,\in [0, \widetilde{T}_0)$,\, be a solution of the coupled RG-$2$ flows  (\ref{out1})  and  (\ref{out1b}) in the forward $\,\tilde{t}\,$ and backward $\widetilde{\eta}\,:=\,\widetilde{T}_0\,-\,\tilde{t}$  evolution times, \emph{i.e.},   \\ 
\begin{equation} 
\frac{\partial }{\partial \,\tilde{t}\, }\;g_{ij}(\,\tilde{t}\, )\,=\,-\,2\mathrm{Ric}_{ij}\left(g(\,\tilde{t}\,) \right)\,-\,\frac{\alpha_{g}}{2}\,\mathrm{Rm}^2_{ij}(\,\tilde{t}\,)\;,\;\;\;\; g_{ab}(\,\tilde{t}\, =0)=g_{ab}\;,
   \label{RGflowAA}
\end{equation}
and 
\begin{equation}
\label{omegaheatilde}
\frac{\partial}{\partial\widetilde{\eta}}\,d{\omega}[g(\widetilde{\eta})]\,=\,\Delta_{g(\widetilde{\eta})}\,d{\omega}[g(\widetilde{\eta})]\,+\,
\mathrm{div}^{(\omega)}\xi_{g(\widetilde{\eta})}\,d\omega[g(\widetilde{\eta})]\;.
\end{equation}
Let us rescale $\tilde{t}$ according to $\,\tilde{t}\,\,=\,\frac{t}{\lambda}$, $\lambda\in\mathbb{R}_{>0}$. We get \\
\begin{equation}
 \label{RGflowbisAA}
\frac{\partial }{\partial t}\,\lambda\,g_{ij}\left({t}/{\lambda}\right)\,=\,-\,2\mathrm{Ric}_{ij}\left(g\left({t}/{\lambda}\right) \right)\,-\,\frac{\alpha_{g}}{2}\,\mathrm{Rm}^2_{ij}\left(g\left({t}/{\lambda}\right)\right)\;,\;\;\;\;g_{ab}\left({t}/{\lambda}=0\right)=g_{ab}\;,
\end{equation}
and
\begin{equation}
\label{omegarescaled}
\frac{\partial}{\partial \eta}\,\lambda\,d{\omega}[g\left( {\eta}/{\lambda} \right)]\,=\,\Delta_{g ({\eta}/{\lambda}) }
\,d{\omega}[g\left({\eta}/{\lambda}\right)] \,+\,
\mathrm{div}^{(\omega)} \xi_{g({\eta}/{\lambda})}\,d\omega[g \left({\eta}/{\lambda}\right)]\;,
\end{equation}
where we have rewritten
${\partial }/{\partial ({t}/{\lambda}) }$ as $\lambda\,{\partial }/{\partial t}$ (similarly for ${\partial }/{\partial ({\eta}/{\lambda}) }$).
We start the discussion of the scaling properties of  (\ref{omegarescaled}) by rewriting it as 
\begin{equation}
\label{omegarescaledbis}
\frac{\partial}{\partial \eta}\,d{\omega}[g\left( {\eta}/{\lambda} \right)]\,=\,\lambda^{-1}\,\Delta_{g ({\eta}/{\lambda}) }
\,d{\omega}[g\left({\eta}/{\lambda}\right)] \,+\,
\mathrm{div}^{(\omega)} \lambda^{-1}\xi_{g({\eta}/{\lambda})}\,d\omega[g \left({\eta}/{\lambda}\right)]\;,
\end{equation}
and by noticing that under the rescaling $g\left({\eta}/{\lambda} \right)\longmapsto \lambda\,g\left({\eta}/{\lambda} \right)$ the Laplace--Beltrami operator $\Delta_{g (\frac{\eta}{\lambda}) }$,  the gradient vector field $\xi_{g(t)}\,=\,\nabla_{g(t)}\psi(t)$,
and the measure $d{\omega}[g\left({\eta}/{\lambda} \right)]$ scale as
\begin{eqnarray}
 \Delta_{\lambda\,g ({\eta}/{\lambda}) } \,&=&\,\lambda^{-1}\, \Delta_{g ({\eta}/{\lambda}) }\;,\nonumber\\
\label{sclDM}\\
\xi_{\lambda\,g({\eta}/{\lambda})}^i\,&=&\,\lambda^{-1}\, g^{ik}({\eta}/{\lambda})\,\partial_k\,\psi({\eta}/{\lambda})\,=\,
\lambda^{-1}\,\xi_{g({\eta}/{\lambda})}^i\;,\nonumber\\
\nonumber\\
d{\omega}[\lambda\,g\left({\eta}/{\lambda} \right)]\,&=&\,\lambda^{\frac{n}{2}}\,d{\omega}[g\left({\eta}/{\lambda} \right)]\;.\nonumber
\end{eqnarray}
From these relations it follows that if  along $t\,\longmapsto \,g(t)$  we rescale the metric according to  
$g_{ab}(\eta)\,\longmapsto \, \lambda\,g_{ab}\left(\frac{\eta}{\lambda} \right)$  then (\ref{omegarescaledbis}) reduces to
\begin{equation}
\label{omegarescaled*}
\frac{\partial}{\partial \eta}\,d{\omega}[\lambda\,g\left({\eta}/{\lambda} \right)]\,=\,\Delta_{\lambda\,g ({\eta}/{\lambda}) }
\,d{\omega}[\lambda\,g\left({\eta}/{\lambda}\right) ]\,+\,
\mathrm{div}^{(\omega)} \xi_{\lambda\,g({\eta}/{\lambda})}\,d\omega[g \left({\eta}/{\lambda}\right)]\;.
\end{equation}
According to  (\ref{alphaG}) and  (\ref{alfac}), the  scaling relation (\ref{sclDM}) for the measure $d{\omega}$ also implies that
\begin{equation}
\alpha_{\lambda\,g}\,=\,
\alpha_{\lambda\,g({\eta}/{\lambda})}\,=\,\lambda\,\alpha_{g({\eta}/{\lambda})}\,=\,\lambda\,\alpha_{g}\;.  
\end{equation}
If we take into account this latter result and the Riemannian scaling relations
$\mathrm{Ric}\left(g\left(\frac{t}{\lambda}\right) \right)\,=\,\mathrm{Ric}\left(\lambda\,g\left(\frac{t}{\lambda}\right) \right)$, \; 
$\mathrm{Rm}^2\left(\lambda g(\frac{t}{\lambda})\right)=\lambda^{\,-1}\,\mathrm{Rm}^2\left( g(\frac{t}{\lambda})\right)$,\; 
it easily follows that the rescaled metric  $(\lambda\,g_{(0)},\,t)\longmapsto \lambda\,g\left(\frac{t}{\lambda} \right)$,\; ${t}\in [0, \lambda\,\widetilde{T})$ is a space and time rescaled solution of the RG-$2$ flow\\

\begin{equation} 
\begin{tabular}{l}
$\frac{\partial }{\partial t}\, \lambda\,g_{ij}\left(\frac{t}{\lambda}\right)=\,-\,2\mathrm{Ric}_{ij}\left(\lambda\,g\left(\frac{t}{\lambda}\right) \right)\,-\,\frac{\alpha_{\lambda g}}{2}\,\mathrm{Rm}^2_{ij}\left(\lambda g(\frac{t}{\lambda})\right)\,,$ \\ 
\\ 
$\lambda\,g_{ab}\left(\frac{t}{\lambda}=0\right)\,=\,\lambda\,g_{ab}\;,\;\;\;\;\;\;{t}\in [0, \lambda\,\widetilde{T})$\;.%
\end{tabular}
   \label{RGflowtrisAA}
\end{equation}
This, together with (\ref{omegarescaled*}), implies that   the solution of the coupled system (\ref{out1}) and (\ref{out1b}) 
has the parabolic space and time scaling symmetry
\begin{equation}
\left(g(t),\, \xi_{g(t)},\, d{\omega}\left[g\left(T_0-t\right)\right]\right)\,\longmapsto \,
\left(\lambda\,g\left({t}/{\lambda} \right),\,\xi_{\lambda\,g(t/\lambda)},\,
d{\omega}\left[\lambda\,g\left({(T_0-t)}/{\lambda} \right)\right]   \right)\;,
\end{equation} 
$t\,\in\,[0, T_0]$,\;$\forall \,\lambda\,\in\,\mathbb{R}_{>0}$,\;as stated.
\end{proof}

We briefly elaborate on the consequences of the scale-invariant flow to solitons.
In \cite{GGI}, the authors investigated soliton structures for the RG-2 flow, and (without the scale-invariant $\alpha$) concluded that homothetically expanding solitons were quite restricted; for example, they are Einstein manifolds. One may suspect that these restriction are somehow related to the lack of scale invariance of  (\ref{RG2flow}), but this is not the case since this behavior holds also for the scale invariant RG-2 flow.   We show this explicitly in the case that $g_0$ has constant curvature. Let $\left(M, g_0, d\omega(g_0)\,=\,e^{-\,f_0}\,d\mu_{g_0}\right)$ be the Riemannian manifold 
 that we use as initial datum (in the sense specified by Theorem \ref{ThA}) for the coupled system (\ref{out1}) and (\ref{out1b}) defining the scale-invariant RG-2 flow, and let us assume that $g_0$ is a constant curvature metric, \, \emph{i.e.} $R_{ijkl}(g_0)\,=\,K\,\left((g_0)_{il} (g_0)_{jk}-(g_0)_{ik}(g_0)_{jl}\right)$ for some constant $K$. Say one has a solution $t\,\mapsto\,\left(g(t), d\omega(t)\,=\,e^{-\,f(t)}\,d\mu_{g(t)}\right)$ of  (\ref{out1}) and (\ref{out1b}) where the metric part evolves by scaling,  \emph{i.e.}, $g(t) = \sigma(t) g_0$, with  $\sigma(t)\,\in\,\mathbb{R}_{>0}$, \,$\sigma(t=0)\,=\,1$.  If we take into account these remarks, then
\begin{equation}
-2\mathrm{Ric}(g(t)) - \frac{\alpha_{g(t)}}{2} \mathrm{Rm}^2(g(t)) = -2\mathrm{Ric}(g_0) - \frac{\alpha_{g_0}}{2}\,\sigma(t)^{-\,1}\,
 \mathrm{Rm}^2(g_0)\;,
\end{equation}
where, besides $\alpha_{g_0}\,=\, \alpha_{g(t)}$, we have used $\mathrm{Ric}(g(t)) = \mathrm{Ric}(g_0)$ and $\mathrm{Rm}^2(g(t))\,=\,\sigma(t)^{-\,1}\,\mathrm{Rm}^2(g_0)$. Since $g_0$ is of constant curvature, we can write $\mathrm{R}_{ij}(g_0)\,=\,K\,(n-1)\,(g_0)_{ij}$ and $\mathrm{Rm}^2_{ij}(g_0)\,=\,2K^2\,(n-1)\,(g_0)_{ij}$.
Hence, corresponding to the assumed scaling evolution for the metric, the RG-2 flow takes the form (factoring out a common $g_0$)
\begin{equation}
\frac{d}{d t}\sigma(t) \,= \, -2K\,(n-1)\, -\alpha_{g_0}\,\sigma(t)^{-\,1}\, K^2\,(n-1)\,, 
\end{equation}
which is  implicitly solved  by  the Lambert W function construction that was developed in \cite{GGI3}, \emph{i.e.},  by scaling factors $\sigma(t)$ which satisfy 
\begin{equation}
\sigma(t) \,= \, -2K\,(n-1)t\,+\,1\, +\,\frac{\alpha_{g_0}\,K}{2}\,\ln\,\left|\frac{2\sigma(t)\,+\,\alpha_{g_0}K}{2\,+\,\alpha_{g_0}K} \right|\;.
\end{equation}

These remarks explicitly show that the variegated nature of the soliton structures for the standard RG-2 flow is not caused by the fact that the flow is not scale invariant. The complex structure is indeed found also for the scale-invariant RG-2 flow. It is the geometric interplay with the $\alpha_g\mathrm{Rm}^2$ term that claims responsibility for that.

\section{ENTROPIES}

A second set of results we prove concerns the existence of a monotonic functional which plays for  (\ref{out1}) the same role Perelman's $\mathcal{F}$--energy \cite{Perelman} has in standard Ricci flow. This is a very delicate issue which has two distinct aspects. One concerns to what extent the Perelman's functional $\mathcal{F}$ may be used to control also the RG-2 flow, an issue that in the physics literature has been addressed at various levels by A. Tseytlin \cite{tseytlin} and by T. Oliynyk, V. Suneeta, and  E. Woolgar \cite{oliy}, in connection with A. Zamolodchikov's \textit{c-theorem} \cite{zamo}. The other issue concerns the possibility of extending Perelman's technique for  constructing  explicitly a monotonic functional with respect to which the RG-2 flow is gradient. An entropy for a (normalized) RG-2 flow on surfaces with positive curvature was found by V. Branding \cite{VB}, by generalizing R. Hamilton's entropy for the Ricci flow on surfaces with positive curvature \cite{Ha}; however, as in the Ricci flow case, this RG-2 flow surface entropy does not generalize to higher dimensional manifolds.  
It is interesting to note that in \cite{flowers}, B. Chow and P. Lu considered an approach to entropy for the RG-2 flow in general dimensions, with the hope that it would apply, in some recursive way, also to higher loops corrections (see \cite{flowers}, equation (17.32)). The functional that they consider is the natural analog of Perelman's functional. They were able to derive a quantity based on this functional which, at a \emph{given fixed time}, is instantaneously monotonic if one considers the sum of the instantaneous and synchronous variation of the Perelman functional along the Ricci flow direction and along a $\mathrm{Rm}^2$ flow direction. 

 In what follows, we derive an extended Perelman entropy $\mathcal{F}({g}(t), {{f}(t)}, {\xi}_g(t))$
that is a natural generalization of Perelman's entropy by exploiting the gauge freedom related to the gradient vector field ${\xi_g}$. Although our strategy emphasizes, as in Perelman's analysis of the Ricci flow \cite{Perelman}, the interplay between the  diffeomorphism group and the RG-2 flow, it has aspects that are in the spirit of Chow and Lu's suggestion. We replace their two--flows splitting with the full RG-2 flow coupled to a corresponding auxiliary flow governing the gauge drift vector field ${\xi_g}$ associated to the measure $d\omega$. It is the latter that allows to take into account the contribution of the $\mathrm{Rm}^2$ term  to the entropy. Quite remarkably, by minimizing  $\mathcal{F}({g}(t), {{f}(t)}, {\xi}_g(t))$ with respect to the auxiliary fields $f$ and $\xi_g$, we obtain a geometric functional $\Lambda[g]$ directly related to a natural extension of Perelman's $\lambda(g)$--functional in terms of the first eigenvalue of the weighted Laplacian $\Delta_g^{({\omega})}$ on $(M, g,  d \omega)$.  As a consequence of the monotonicity of $\mathcal{F}({g}(t), {{f}(t)}, {\xi}_g(t))$  we prove that this extended  $\Lambda[g]$ is monotonic for the RG-2 flow.\\
\\
 To begin, let us recall that in the Ricci flow case, Perelman's energy functional  is constructed  by considering, along the Ricci flow metric $[0,\,T_0]\,\ni\,t\,\longmapsto \,h(t)$, solution of 
 \begin{equation}
\frac{\partial}{\partial t}\,h(t)\,=\, -2\,\mathrm{Ric}(h(t))\;,\;\;\;\;\;\;\;\; h(0)\,=\,h_0\;,
\label{RiccioA}
\end{equation}
 a (probability) measure $d\pi(t)\,:=\,e^{\,-\,{m(t)}}\,d\mu_{t}$, \, $m(t)\,\in\,C^\infty(M, \mathbb{R})$,  evolving according to  the backward heat equation
\begin{equation}
\label{BackHeat}
\frac{\partial}{\partial  t}\,d\pi( t)\,=\,
-\,\Delta_{h( t)}\,d\pi ( t)\;.
\end{equation}
To the resulting $t\,\longmapsto \,\left(M, h,\, d{\pi }\right)$ we associate the $\mathcal{F}(h(t), {m(t)})$--energy functional 
\begin{equation}
\label{perelman1}
\mathcal{F}(h(t), {m(t)})\,:=\,\int_M\,\left[\mathrm{R}(h(t))\,+\,|\nabla m(t)|^2_{h(t)} \right]\,d\pi (t)\,=\,
\int_M\, \mathrm{R}^{Per}(h(t))\,d\pi (t)\;,
\end{equation}
where 
\begin{equation}
\label{defPerlmanR}
\mathrm{R}^{Per}\,:=\,\mathrm{R}\,+\,2 \Delta _{h}f\,-\,\left|\nabla f \right|_h^2\,=\mathrm{R}\,+\,2 \Delta^{(\omega)} _{h}f\,+\,\left|\nabla f \right|_h^2
\end{equation}
denotes  the Perelman's modified scalar curvature associated with the Riemannian manifold with density $(M, h, d\pi)$.
$\mathcal{F}(h(t), {m(t)})$ is the \emph{entropy production functional}  $\frac{d}{d t}N(h(t), d\pi (t))$ of the related entropy (Nash entropy)   
\begin{equation}
\label{NashE1}
N(h(t), d\pi (t))\,:=\,-\int_M\,\log\left(\frac{d\pi (t)}{d\mu_{h(t)}}\right)\,d\pi (t)\;,
\end{equation} 
associated with the coupled evolution  (\ref{RiccioA}) and (\ref{BackHeat}). As a consequence of  the time--dependence of the metric $h(t)$,  the Nash entropy is not a monotonic quantity, whereas the entropy production functional $\mathcal{F}(h(t), {m(t)})$ turns out to enjoy a subtle monotonicity property of great geometrical relevance. This was one of Perelman's fundamental discoveries \cite{Perelman}. If, along the flow $t\,\longmapsto \,(h(t), d\pi(t))$, \,$t\,\in\,[0, T_0]$,\, one considers the $1$--parameter family of diffeomorphisms $\phi(t)\,:\,M\,\longmapsto \,M$ solution of the system of ODE $\frac{d}{dt}\,\phi(t)\,=\,-\,\nabla _{h(t)}\,m(t)$, \,$\phi(t=0)\,=\,\mathrm{id}_M$, then the pulled back metric and measure, 
 $\overline{h}(t)\,:=\,\phi(t)^*h(t)$ and $d\overline{\pi}(t)\,:=\,\phi(t)^*d\pi(t)$ satisfy the system
 \begin{eqnarray}
\frac{\partial}{\partial t}\,\overline{h}(t)\,&=&\, -2\,\left(\mathrm{Ric}(\overline{h}(t))\,+\, \nabla _{h(t)}\nabla _{h(t)}\,m(t)\circ \phi(t) \right)\;,\;\;\;\;\;\;\;\; \overline{h}(0)\,=\,h_0\;,\nonumber\\
\label{RiccioA2}\\
\frac{\partial}{\partial t}\,d\overline{\pi}(t)\,&=&\,0\;,\nonumber
\end{eqnarray}
which appears as the gradient flow of the functional (\ref{perelman1}). Note that by  diffeomorphim invariance, one easily shows that 
$\mathcal{F}(h(t), {m(t)})$ is monotonic along the original coupled flows (\ref{RiccioA}) and (\ref{BackHeat}), 
\begin{equation}
\frac{d}{dt}\,\mathcal{F}(h(t), {m(t)})\,=\,2\,\int_M\,\left|\mathrm{R}(h(t))\,+\,|\nabla m(t)|^2_{h(t)} \right|^2\,d\pi (t)\;.
\end{equation}
The minimization of  $\mathcal{F}(h(t), {m(t)})$ over all possible  (absolutely continuous) probability measures $d\pi $ provides Perelman's  geometric functional $\lambda(h)$  (see equation (\ref{Perelambda}) below for the explicit definition) which is monotonically non-decreasing along the  Ricci flow.  

\section{MONOTONICITY OF THE NASH ENTROPY}

 Not surprisingly, the situation described above is significantly more complex for the   RG--2 flow (\ref{out1}).  To begin with, if  we choose the gauge vector field ${\xi_{g(t)}}\,=\,\nabla_{g(t)}\psi(t)\,\equiv \,0$  for all $t\in [0, T_0]$,  then from the parabolicity requirement for the RG-2 flow, we have monotonicity  for a modified Nash entropy. 
\begin{theorem}
\label{ThB}
Let $T_0\,<\,T$ and, along the flow $[0, T_0]\,\ni t\,\longmapsto \left(g(t), d{\omega}(t);\,{\xi_{g(t)}}\,=\,0\right)$ solution of the RG--2 flow (\ref{out1}), define the extended Nash entropy functional 
\begin{eqnarray}
\label{NashE}
{N}\left(g(t), d{\omega}(t)\right)\,&:=&\,-\int_M\,\log\left(\frac{d\omega (t)}{d\mu_{g(t)}}\right)\,d\omega (t)\,
-\,n(n-1)\,\alpha_g^{\frac{n}{2}\,-\,1}\,t\nonumber\\
\\
&=&\,-\,\int_M\,\left(f(t)\,+\,\frac{n(n-1)\,t}{\alpha_g}\,\right)\,\,e^{\,-\,f(t)}d\mu_{g(t)}\;.\nonumber
\end{eqnarray} 
Then, as long as  $1\,+\,\alpha_g\,\mathcal{K}_{P}(t)\,>\,0$,\,$\forall \,P\in Gr_{(2)}(TM)$, we have 
\begin{eqnarray}
&&\frac{d}{d t}\,{N}\left(g(t), d{\omega}(t)\right)\nonumber\\
\label{entrop}\\
&&=\,\int_M\,\left[\mathrm{R}^{Per}(g(t))\,+\,\frac{\alpha_g}{4}\left|\mathrm{R}m(g(t))\right|^2_{g(t)}\,+\, 
\frac{n(n-1)}{\alpha_g}\right]\,e^{\,-\,f(t)}\,d\mu_{g(t)}\,\geq\,0\;.\nonumber
\end{eqnarray}
\end{theorem}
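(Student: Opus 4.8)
The plan is to adapt Perelman's derivation of the monotonicity of the Nash entropy along the Ricci flow, keeping track of the two features peculiar to the RG-$2$ setting: the quadratic curvature term $\tfrac{\alpha_g}{2}\mathrm{Rm}^2$ in (\ref{out1}) and the explicit linear-in-$t$ correction built into the entropy (\ref{NashE}). \emph{First} I would record the evolution of the potential $f$. Tracing (\ref{out1}) and using $g^{ij}\mathrm{Rm}^2_{ij}=|\mathrm{Rm}|^2_g$ gives $\ppt d\mu_{g(t)}=-\bigl(\mathrm{R}(t)+\tfrac{\alpha_g}{4}|\mathrm{Rm}(t)|^2_{g(t)}\bigr)\,d\mu_{g(t)}$; specializing the measure equation (\ref{out1b}) to the gauge $\xi_{g(t)}=0$ and invoking (\ref{Wlap}) gives $\ppt d\omega(t)=-\Delta_{g(t)}\,d\omega(t)=\Delta^{(\omega)}_{g(t)}f(t)\,d\omega(t)$; equating the latter with $\ppt(e^{-f}d\mu_g)=(-\ppt f)\,d\omega+e^{-f}\,\ppt d\mu_g$, and recalling that $\alpha_g$ is constant along the flow by (\ref{alfac}), yields the RG-$2$ analogue of Perelman's conjugate heat equation,
\begin{equation*}
\ppt f \;=\; -\Delta_g f + |\nabla f|^2_g - \mathrm{R} - \tfrac{\alpha_g}{4}|\mathrm{Rm}|^2_g .
\end{equation*}

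\emph{Next} I would differentiate $N(g(t),d\omega(t))$ under the integral sign, substitute the equation above, and integrate by parts, using the ordinary Green identity against $d\mu_g$ together with the weighted one against $d\omega$, for which the formal $L^2(M,d\omega)$-adjoint of $\nabla$ is $-\mathrm{div}^{(\omega)}$, so that $\int_M\phi\,\Delta^{(\omega)}_g\psi\,d\omega=-\int_M\langle\nabla\phi,\nabla\psi\rangle_g\,d\omega$ and in particular $\int_M\Delta^{(\omega)}_g f\,d\omega=0$. The terms in $\Delta_g f$ cancel; the surviving $f$-gradient contributions reassemble (via the second form of $\mathrm{R}^{Per}$ in (\ref{defPerlmanR})) into $\int_M\mathrm{R}^{Per}(g)\,d\omega=\int_M(\mathrm{R}+|\nabla f|^2_g)\,d\omega$; the curvature term contributes $\tfrac{\alpha_g}{4}\int_M|\mathrm{Rm}|^2_g\,d\omega$; and the derivative of the linear correction contributes exactly $n(n-1)\alpha_g^{\frac n2-1}=\int_M\tfrac{n(n-1)}{\alpha_g}\,d\omega$, since $\int_M d\omega=\alpha_g^{n/2}$. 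Together these are precisely the right-hand side of (\ref{entrop}); indeed the correction in (\ref{NashE}) is tuned so that its derivative supplies exactly the constant $n(n-1)/\alpha_g$ needed to dominate the (possibly negative) scalar curvature. This is the step where care is needed: although it is only sign-bookkeeping, one must be scrupulous about the orientation of the backward Fokker--Planck equation (\ref{out1b}), the sign in (\ref{Wlap}), and the fact that the two integrations by parts are taken against different measures, so that the curvature terms emerge with the signs in (\ref{entrop}) rather than their negatives.

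\emph{Finally}, the positivity. It suffices to prove the pointwise bound $\mathrm{R}+\tfrac{\alpha_g}{4}|\mathrm{Rm}|^2_g+\tfrac{n(n-1)}{\alpha_g}\ge 0$, since $\int_M\mathrm{R}^{Per}(g)\,d\omega=\int_M(\mathrm{R}+|\nabla f|^2_g)\,d\omega\ge\int_M\mathrm{R}\,d\omega$. In an orthonormal frame, writing $\mathcal{K}_{ij}$ for the sectional curvature of the $e_i$-$e_j$ plane, one has $\mathrm{R}=2\sum_{i<j}\mathcal{K}_{ij}$ and, keeping only the components of $\mathrm{Rm}$ all of whose indices lie in a two-element set $\{i,j\}$, $|\mathrm{Rm}|^2_g\ge 4\sum_{i<j}\mathcal{K}_{ij}^2$; hence
\begin{equation*}
\mathrm{R}+\tfrac{\alpha_g}{4}|\mathrm{Rm}|^2_g\;\ge\;\sum_{i<j}\bigl(2\mathcal{K}_{ij}+\alpha_g\mathcal{K}_{ij}^2\bigr)\;=\;\alpha_g\sum_{i<j}\Bigl(\mathcal{K}_{ij}+\tfrac{1}{\alpha_g}\Bigr)^2-\binom{n}{2}\,\tfrac{1}{\alpha_g}\;\ge\;-\tfrac{n(n-1)}{2\alpha_g},
\end{equation*}
which more than compensates $\tfrac{n(n-1)}{\alpha_g}$. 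Since the completing-the-square estimate holds for every real $\mathcal{K}_{ij}$ and every $\alpha_g>0$, the inequality in (\ref{entrop}) is in fact unconditional; the parabolicity hypothesis $1+\alpha_g\mathcal{K}_P(t)>0$, $P\in Gr_{(2)}(TM)$, enters only via Theorem \ref{ThA}, to guarantee that the flow, hence $N(g(t),d\omega(t))$, is defined and smooth on $[0,T_0]$, so that the manipulations of the first two steps are legitimate. In short, the result is one sign-sensitive computation plus one elementary quadratic inequality, and the only real obstacle is not to mishandle a sign in the middle step.
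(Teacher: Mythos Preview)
Your derivation of the formula in (\ref{entrop}) is correct and runs parallel to the paper's: both compute $\partial_t f$ from the measure equation and the trace of (\ref{out1}), substitute, and integrate by parts to arrive at $\int_M\bigl(\mathrm{R}+|\nabla f|^2+\tfrac{\alpha_g}{4}|\mathrm{Rm}|^2+\tfrac{n(n-1)}{\alpha_g}\bigr)e^{-f}\,d\mu_g$.

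Where you diverge from the paper is in the positivity step, and your route is in fact sharper. The paper simply drops the nonnegative terms $|\nabla f|^2$ and $\tfrac{\alpha_g}{4}|\mathrm{Rm}|^2$ and then writes $\mathrm{R}+\tfrac{n(n-1)}{\alpha_g}=\alpha_g^{-1}\sum_{a\neq b}\bigl(1+\alpha_g\,\mathcal{K}_{P(a,b)}\bigr)$, so the sectional-curvature hypothesis $1+\alpha_g\mathcal{K}_P(t)>0$ is used \emph{directly} to force each summand positive. You instead keep the $|\mathrm{Rm}|^2$ term, bound it below by $4\sum_{i<j}\mathcal{K}_{ij}^2$, and complete the square to obtain the unconditional pointwise estimate $\mathrm{R}+\tfrac{\alpha_g}{4}|\mathrm{Rm}|^2+\tfrac{n(n-1)}{\alpha_g}\ge \tfrac{n(n-1)}{2\alpha_g}>0$. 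This shows that the curvature hypothesis is needed only for the existence of the flow (Theorem~\ref{ThA}), not for the sign of the integrand; it also yields a strict quantitative lower bound rather than mere nonnegativity. The paper's argument is marginally shorter and makes the role of the parabolicity condition visible in the entropy estimate itself, but your argument proves more.
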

\begin{proof}
The gauge choice  $\xi_{g(t)}\,=\,0$ for all $t\in [0, T_0]$   uncouples the evolution of the measure $d\omega(t)$ from $\xi_{g(t)}$, and if we
compute, along (\ref{out1}) and (\ref{out1b}),  the derivative $\frac{d}{d t}\,{N}\left(g(t), d{\omega}(t)\right)$ of  the relative entropy functional defined by  (\ref{NashE}) , we get 
\begin{eqnarray}
\frac{d}{dt}\,N\left(g(t), d{\omega}(t)\right)\,&=&\,-\,\frac{d}{d t}\,\int_M\,f\,e^{-f}\,d\mu_g\,+\,
\frac{n(n-1)}{\alpha_g}\,\int_M\,e^{-f}\,d\mu_g\nonumber\\
\label{comp1}\\
&=&\,-\int_M\,\frac{\partial f}{\partial t}\,e^{-f}\,d\mu_g\,-\,\int_M\,f\,\frac{\partial }{\partial t}\,\left(e^{-f}\,d\mu_g\right)\,
+\,\frac{n(n-1)}{\alpha_g}\,\int_M\,e^{-f}\,d\mu_g\nonumber\;,
\end{eqnarray}
where we dropped all 
$t$--dependence, since notation wants to travel light.
\noindent From  (\ref{Wlap}), one recovers the standard relation
\begin{equation}
\frac{\partial f}{\partial t}\,=\,-\,\Delta_g f + |\nabla f|_g^2 \,+\,\frac{1}{2}g^{ab}\frac{\partial g_{ab}}{\partial t}\,=\,-\Delta_g^{({\omega})}\,f\,+\,\frac{1}{2}g^{ab}\frac{\partial g_{ab}}{\partial t}\;,
\end{equation} 
where  $\Delta_g^{({\omega})}$ is the $d{\omega}(t)$--weighted Laplacian on $(M, g(t),  d \omega(t))$.  We also need the identity (integration by parts)
\begin{eqnarray}
\int_M\,f\, \Delta_g \left(e^{\,-f} \right)\,d\mu_g\,&=&\,\int_M\,\Delta_g\,f  \,e^{\,-f}d\mu_g\,=\,
\int_M\,\left(\Delta_g^{({\omega})}\,f \,+\,  |\nabla f|_g^2 \right)\,e^{\,-f}d\mu_g\\
&=&\,
\int_M\,|\nabla f|_g^2 \,e^{\,-f}d\mu_g\;.\nonumber
\end{eqnarray}
Introducing these expressions in (\ref{comp1})  we  get
\begin{equation}
\label{}
\frac{d}{d t}\,N\left(g(t), d{\omega}(t), t\right)\,
=\,\int_M\,\left[-\,\frac{1}{2}g^{ab}\frac{\partial g_{ab}}{\partial t}\,+\, |\nabla f|_g^2   \,+\, \frac{n(n-1)}{\alpha_g}\right]\,e^{\,-f}d\mu_{g(t)}\;,
\end{equation}
where everything depends on $t$, and which along the RG--2 flow  (\ref{out1}) provides
\begin{equation}
\label{entrop2}
\frac{d}{d t}\,N\left(g(t), d{\omega}(t), t\right)\,
=\,\int_M\,\left[\mathrm{R}\,+\,\alpha_g|\mathrm{Rm|}^2_{g}\,+\,|\nabla f|_g^2\,+\, \frac{n(n-1)}{\alpha_g}\right]\,e^{\,-f}d\mu_{g(t)}\;.
\end{equation}
\\
\noindent
 Let us now observe that at any given point $x\in M$ we can rewrite the scalar curvature $\mathrm{R}(x, t)$ in terms of the sectional curvatures $\mathcal{K}_{P}(x, t)$ of $(M, g(t))$ as the $2$--planes $P$ vary in the Grassmannian $Gr(2)(T_x M)$. To this end, if we let $\left\{e_{(a)} \right\}_{a=1}^n$ denote an orthonormal basis for $T_x M$, and denote by $P(a,b)$ the $2$--plane in $Gr(2)(T_x M)$ generated by $e_{(a)}\wedge e_{(b)}$, \, with $a\not= b$, then
\begin{equation}
\mathrm{R}(x, t)\,=\,\sum_{a, b =1, a\not= b }^n\,\mathcal{K}_{P(a,b)}(x, t)
\end{equation}
Since $\sum_{a, b =1, a\not= b}^n\,1\,=\,n(n-1)$ we can write
\begin{equation}
\mathrm{R}(x, t)\,+\,\frac{n(n-1)}{\alpha_g}\,
=\,\sum_{P(a,b)}\,\frac{1\,+\,\alpha_g\,\mathcal{K}_{P(a,b)}(x, t)}{\alpha_g}\;.
\end{equation}
Hence, as long as $1\,+\,\alpha_g\,\mathcal{K}_{P(a,b)}(x, t)>\,0$,  we have 
\begin{equation}
\mathrm{R}(x, t)\,+\,\frac{n(n-1)}{\alpha_g}\,>\,0\;,
\end{equation}
 and the  theorem follows.
\end{proof}

\section{AN EXTENDED PERELMAN'S ENERGY FUNCTIONAL}
\label{sect5}
The monotonicity of the Nash entropy functional is a  rather weak  result since it requires that the curvature condition 
$1\,+\,\alpha_g\,\mathcal{K}_{P}(t)\,>\,0$,\,$\forall \,P\in Gr_{(2)}(TM)$, imposed on the initial metric $g$, holds along the evolution of the RG-2 flow, a property that is very difficult to establish.  If we direct our attention to  the behavior of  the Perelman's energy functional \cite{Perelman}
\begin{equation}
\label{orPer}
\mathcal{F}(g(t), {f(t)})\,:=\,\int_M\,\left[\mathrm{R}(g(t))\,+\,|\nabla f(t)|^2_{g(t)} \right]\,d\omega (t)\,=\,
\int_M\, \mathrm{R}^{Per}(g(t))\,d\omega (t)\;,
\end{equation}
the situation, hard to handle in  the standard RG-2 flow (\ref{RG2flow}),  improves considerably along the scale--invariant RG-2 flow $[0, T_0]\,\ni t\,\longmapsto \left(g(t), d{\omega}(t)\,;\,{\xi_{g(t)}}\right)$, defined by (\ref{out1}).  We can exploit the freedom in choosing the drift vector field ${\xi_{g(t)}}$ for controlling the vagaries of the $\mathrm{Rm}^2$ term and obtain monotonicity for a natural variant of  $\mathcal{F}(g(t), {f(t)})$.  We start by recalling the expression of  the pointwise evolution of  Perelman's modified scalar curvature  $\mathrm{R}^{Per}(g(t))$ which appears in (\ref{orPer}). This is indeed instrumental for the characterization of Perelman's $\mathcal{F}$--energy  for the Ricci flow, and plays a  basic role in the RG-2 flow case.\\
\\
\noindent
In full generality, let us consider  the generic (germ of ) curve of metrics $[0,1]\ni t\longrightarrow g(t)$, with tangent vector provided by a smooth ($t$--dependent) symmetric bilinear form $v\in C^{\infty }(M, \otimes ^2_{Sym}T^*M)$,
\begin{equation}
\label{vflow}
\frac{\partial }{\partial t }\,g_{jk}(t )=\,v_{jk}(t)\;.
\end{equation}
Along (\ref{vflow}), we have (for a detailed derivation see  \cite{flowers0}, Chapter 6,   Exercise 6.84, p. 274) 
\begin{eqnarray}
&&\frac{\partial }{\partial t}\,\mathrm{R}^{Per}(g(t))\,=\,\nabla ^j\nabla ^k v_{jk}+v_{jk}\mathrm{R}^{jk}- 2\nabla^jf\nabla^k v_{jk}\nonumber\\
\label{pointwise1}\\
&&+v_{jk}\nabla^j f\nabla^k f + 2\left(\triangle _g-\nabla_k f\nabla^k \right)\,\left(\frac{\partial f}{\partial t}-\frac{1}{2}\,tr_g(v) \right)\nonumber\\
\nonumber\\
&&- 2 v_{jk} (\mathrm{R}^{jk}+\nabla^j \nabla^k)f\;.\nonumber
\end{eqnarray}
It is useful to write the rather complicated expression (\ref{pointwise1}) in terms of the {\it weighted covariant derivative} $\nabla ^{(\omega)}$ associated with the measure $d\omega$ (see (\ref{Wlie})). We extend it to a generic tensor field $T$ over $M$ according to
\begin{equation}
\label{weight}
\nabla ^{(\omega)}\,T\,:=\,e^{f}\,\nabla \,\left( e^{-\,f}\,T  \right)\,=\,\nabla T\,-\,\nabla f\otimes T\;,   
\end{equation}
where $\nabla $ is the Levi-Civita connection on $(M, g, d\omega)$, (or when time-dependent,  $(M, g(t), d\omega(t))$).\, $\nabla ^{(\omega)}$ is a natural differential operator on the Riemannian manifold with density $(M, g,\,d\omega\,=\,e^{-f}d\mu_g)$. 
To rewrite (\ref{pointwise1}) in terms of  $\nabla ^{(\omega)}$, let us apply the easily proven relations
\begin{eqnarray}
&&\nabla _j^{(\omega)}\nabla _k^{(\omega)}\,v^{jk}\,:=\,e^f\nabla _j\left[e^{\,-f}e^f\,\nabla _k\left(e^{\,-f}v^{jk} \right) \right]=
e^f\nabla _j\nabla _k\left(e^{\,-f}v^{jk} \right)\nonumber\\
\\
&&=\,   \nabla _j\nabla _k v^{jk}- 2\nabla _j f\nabla_k v^{jk}+v^{jk}\nabla_j f\nabla_k f
-v^{jk} \nabla_j \nabla_k f \nonumber\;.
\end{eqnarray}
Then
\begin{equation}
 \nabla _j\nabla _k v^{jk}- 2\nabla _j f\nabla_k v^{jk}+v^{jk}\nabla_j f\nabla_k f \,
 =\,\nabla _j^{(\omega)}\nabla _k^{(\omega)}\,v^{jk}\,+\,v^{jk} \nabla_j \nabla_k f \;.
\end{equation}
By introducing this latter expression in (\ref{pointwise1}), and recalling that  $\triangle _g-\nabla_k f\nabla_k\,:=\,\triangle _g^{(\omega)}$, we 
eventually get
\begin{equation}
\label{Perlequation0}
\frac{\partial }{\partial t}\,\mathrm{R}^{Per}(g(t))\,=\,\nabla ^{(\omega)}_j\nabla ^{(\omega)}_k\,v^{jk}\,-\,\mathrm{R}^{BE}_{jk}\,v^{jk}+\,2\,\triangle _g^{(\omega)}\,\left(\frac{\partial f}{\partial t}-\frac{1}{2}\,tr_g(v) \right)\;,
\end{equation}
where
\begin{equation}
\label{BEM}
\mathrm{Ric}^{BE}(g)\,:=\,\mathrm{Ric}(g)\,+\,\nabla\nabla\,f\;,
\end{equation}
denotes the Bakry--Emery Ricci tensor  associated with $(M, g(t), d\omega(t))$.\\
\\
Having dispensed with these preliminary remarks, let us consider the scale--invariant  
RG-2 flow (\ref{out1}) for which, according to Theorem \ref{ThA}, we have short time existence on some interval $t\,\in\,[0, T)$.  \,Let $T_0\,<\,T$, \, and consider the corresponding  time--reversed flow $[0, T_0]\,\ni \eta\,\longmapsto\, g(\eta)$,\,
$\eta\,:=\, T_0\,-\,t$. Along the backward RG-2 flow  $[0, T_0]\,\ni \,\eta\,\longmapsto \,g(\eta)$ so defined  we choose the gradient\footnote{As usual, in what follows we adopt the convention that   $\nabla$,  when acting on a time--dependent vector or tensor field, denotes the covariant derivative with respect to $(M, g(t))$.}   vector field $\eta\,\longmapsto \,{\xi_{g(\eta)}}$ by requiring that  it evolves, starting from  a given initial 
condition  ${\xi_{\eta=0}\,:=\,\nabla\,\psi(\eta=0)}$  according to the non-linear parabolic  PDE
equation 
\begin{equation}
\label{xiFP}
\frac{\partial}{\partial \eta}\,{\xi_{g(\eta)}}\, =\,\triangle _{g(\eta)}\,{\xi_{g(\eta)}}\,-\,\xi_{g(\eta)}\ast \left(\mathrm{Ric}(\eta)+\frac{\alpha_{g}}{4}\,\mathrm{Rm}^2(g(\eta), \xi_{g(\eta)}) \right)\,-\,
\frac{\alpha_g^2}{64}\,\left|\mathrm{Rm}^2(g(\eta), \xi_{g(\eta)})\right|^2_{{g}(\eta)}\,\xi_{g(\eta)}\;,
\end{equation}
where we have introduced the drift--modified squared curvature:
\begin{equation}
\label{RMBE}
\alpha_g\,\mathrm{Rm}^2\left({g}(\eta),\, \xi_{g(\eta)}\right)\,:=\,\alpha_g\,
 \mathrm{Rm}^2({g}(\eta))\,-\,{2}\,\mathrm{L}_{\xi_{g(\eta)}}{g}(\eta)\;,
 \end{equation}
and where the components of the vector endomorphism $\xi_{g}\ast \left(\mathrm{Ric}+\frac{\alpha_{g}}{4}\,\mathrm{Rm}^2(g, \xi_{g}) \right)$ are defined by  $\xi^i_{g}g^{hk}\left(\mathrm{Ric}_{ih}+\frac{\alpha_{g}}{4}\,\mathrm{Rm}^2_{ih}(g, \xi_{g}) \right)$. Finally,  the term
${\alpha_g^2}\,\left|\mathrm{Rm}^2(g(t), \xi_{g(t)})\right|^2_{{g}(t)}$ denotes the squared norm of  $\alpha_g\,\mathrm{Rm}^2(g(t), \xi_{g(t)})$.\\
\\
For a given initial condition, (\ref{xiFP}) is a parabolic PDE  which admits a unique solution $\xi_{g(\eta)}$ for $\eta\,\in\,[0, T_0]$. Notice that along the original $t$ evolution, $[0, T_0]\,\ni\,t\,\longmapsto\, g(t)$ , of the RG-2 flow, (\ref{xiFP}) can be rewritten as the backward parabolic evolution for  $\xi_{g(t)}\,=\,\xi_{g(\eta=T_0-t)}$,\,$t\,\in\,[0, T_0]$ given by
\begin{equation}
\label{xiFPtime}
\frac{\partial}{\partial t}\,{\xi_{g(t)}}\, =\,-\,\triangle _{g(t)}\,{\xi_{g(t)}}\,+\,\xi_{g(t)}\ast \left(\mathrm{Ric}(t)+\frac{\alpha_{g}}{4}\,\mathrm{Rm}^2(g(t), \xi_{g(t)}) \right)\,+\,
\frac{\alpha_g^2}{64}\,\left|\mathrm{Rm}^2(g(t), \xi_{g(t)})\right|^2_{{g}(t)}\,\xi_{g(t)}\;.
\end{equation}
According to Theorem  \ref{ThA}, we can associate to the  time--reversed flows $[0, T_0]\,\ni \eta\,\longmapsto\, \left(g(\eta),\,{\xi_g}(\eta)\right)$,\, 
$\eta\,:=\, T_0\,-\,t$ so defined also the corresponding parabolic  equation  (\ref{out1b})
\begin{equation}
\frac{\partial}{\partial \eta}\, d\omega(\eta)\,=\,\triangle _{g(\eta)}\,d\omega(\eta)\,+\,\mathrm{div}^{\omega}{\xi_g}(\eta)\,d\omega(\eta)\;,\;\;\;\;\;
d\omega(\eta=0)\,=\,d\omega_{(0)}\;,
\end{equation}
whose solution defines the evolution  $\eta\,\longmapsto \,d\omega(\eta)$. This forward/backward parabolic see--saw game characterizes 
the flow $t\,\longmapsto \,\left(d\omega(t=T_0-\eta)\,=\,e^{-\,f(t)}\,d\mu_{g(t)}\,;\,{\xi_{g(t)}}\right)$ to which we can associate the vector field 
\begin{equation}
W(t)\,:=\,-\,\left(\nabla\,f(t) \,-\,{\xi_{g(t)}}\right), \, \;\, t\,\in[0,T_0]\;.
\end{equation}
The next step is to consider the action on (\ref{out1}), (\ref{out1b}), and (\ref{xiFP}) of  the family of diffeomorphisms $[0, T_0]\ni \, t\longmapsto \phi_t$, which solve the non--autonomous system of ODE
\begin{equation}
\label{Wdiff}
\frac{\partial}{\partial t}\,\phi_t(p)\,=\,W\left( \phi_t(p), t \right)\;,\;\;\;\phi_{t=0}\,=\,id_M\;.
\end{equation}
We prove the following results:

\begin{theorem}  
\label{ThBB}
If we denote by $\overline{g}(t)\,:=\,\phi_t^*(g(t))$,\; $d\,\overline{\omega}(t)\,:=\,\phi_t^*(d\omega(t))$, \; $\overline{\xi}_{g(t)}\,:=\,\phi_t^*({\xi_{g(t)}})$,  and 
 $\overline\nabla\,:=\,\nabla_{\overline{g}}$ the relevant pullbacks under the action of the one--parameter family of diffeomorphisms   
$\phi_t$ solving  (\ref{Wdiff}), then the corresponding modified  scale--invariant RG-2 flow associated to the action of  $\phi_t$ on (\ref{out1}), (\ref{out1b}) and  (\ref{xiFP}) is provided by
\begin{eqnarray} 
\frac{\partial }{\partial t }\,\overline{g}(t)\,&=&\,-\,
2\mathrm{Ric}^{BE}(\overline{g}(t ))\,-\,\frac{\alpha_g}{2}\,\mathrm{Rm}^2(\overline{g}(t), \overline{\xi}_{g(t)}) \;, \nonumber\\
 \nonumber\\
 \frac{\partial }{\partial t }\,d\overline{\omega}(t)\,&=&\,0\;,\label{DToutPer}\\
 \nonumber\\
 \ppt  \overline{\xi}_g(t)\,&=&\,-\,
\triangle _{\overline{g}(t)}^{(\omega)}\,{\overline{\xi}_g(t)}\,+\,\overline{\xi}_g(t)\ast \left(\mathrm{Ric}^{BE}(\overline{g}(t))+\frac{\alpha_{g}}{4}\,\mathrm{Rm}^2(\overline{g}(t), \, {\overline{\xi}_g(t)}) \right)\nonumber\\
&&\,+\, 
\frac{\alpha_g^2}{64}\,\left|\mathrm{Rm}^2(\overline{g}(t), {\overline{\xi}_g(t)})\right|^2_{\overline{g}(t)}\,\overline{\xi}_g(t) \;, \nonumber
\end{eqnarray}
and if along $t\,\longmapsto \,\left( M, \overline{g}(t), d\,\overline{\omega}(t),\,\overline{\xi}_{g(t)} \right)$,  we define the extended Perelman energy according to 
\begin{eqnarray}
\label{orPerextended0}
\mathcal{F}(\overline{g}(t), {\overline{f}(t)}, \overline{\xi}_g(t) )\,&:=&\,\int_M\,\left[\mathrm{R}(\overline{g}(t))\,+\,|\overline{\nabla}\, \overline{f}(t)|^2_{\overline{g}(t)}\,+\,|\overline{\nabla}\, \overline{\psi}(t)|^2_{\overline{g}(t)} \right]\,d\overline{\omega} (t)\\
&=&\,
\int_M\, \left(\mathrm{R}^{Per}(\overline{g}(t))\,+\,\left|\overline{\xi}_{g}(t)\right|^2\right)\,d\omega (t)\;,\nonumber
\end{eqnarray}
then we have

\begin{equation}
\label{PerRG2F} 
\frac{d}{dt}\,\mathcal{F}(\overline{g}(t), {\overline{f}(t)}, \overline{\xi}_g(t))
\,\geq\,\,2\,\int_M\,\left| \mathrm{Ric}^{BE}(\overline{g}(t))\,+\,\frac{\alpha_g}{8}\mathrm{Rm}^2(\overline{g}(t), \overline{\xi}_{g(t)})\right|^2_{\overline{g}(t)}\,d\overline{\omega} (t)\;.
\end{equation}
\end{theorem}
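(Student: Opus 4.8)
The plan is to run, for the coupled system \eqref{out1}--\eqref{out1b}--\eqref{xiFP}, the DeTurck-and-diffeomorphism device that yields Perelman's $\mathcal{F}$-monotonicity for the Ricci flow, taking advantage of the fact that the drift $\xi_g$ has been built into the picture precisely so that the $\mathrm{Rm}^2$-terms can be absorbed.

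\emph{Step 1 (the equations \eqref{DToutPer}).} Writing $\overline{T}(t)=\phi_t^*T(t)$ and using $\ppt\overline{T}=\phi_t^*\bigl(\ppt T+\mathrm{L}_{W}T\bigr)$ together with naturality of curvature and of the weighted operators under diffeomorphisms, I would first check the metric equation: $\mathrm{L}_{\nabla f}g=2\nabla\nabla f$ gives $\mathrm{L}_W g=-2\nabla\nabla f+\mathrm{L}_{\xi_g}g$, so $\ppt g+\mathrm{L}_W g=-2(\mathrm{Ric}+\nabla\nabla f)-\tfrac{\alpha_g}{2}\bigl(\mathrm{Rm}^2-\tfrac{2}{\alpha_g}\mathrm{L}_{\xi_g}g\bigr)=-2\mathrm{Ric}^{BE}(g)-\tfrac{\alpha_g}{2}\mathrm{Rm}^2(g,\xi_g)$ by \eqref{BEM} and \eqref{RMBE}. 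For the measure, \eqref{out1b} and \eqref{Wlap} give $\ppt d\omega=\bigl(\triangle^{(\omega)}_gf-\mathrm{div}^{(\omega)}\xi_g\bigr)d\omega$, while $\mathrm{L}_Wd\omega=\mathrm{div}^{(\omega)}W\,d\omega=\bigl(-\triangle^{(\omega)}_gf+\mathrm{div}^{(\omega)}\xi_g\bigr)d\omega$; these cancel, giving $\ppt d\overline{\omega}=0$ --- this is exactly what the choice $W=-(\nabla f-\xi_g)$ is engineered for. The third line of \eqref{DToutPer} follows in the same way from \eqref{xiFPtime}: the commutator $\mathrm{L}_W\xi_g=\nabla_{\xi_g}\nabla f-\nabla_{\nabla f}\xi_g=\xi_g\ast\nabla\nabla f-\nabla_{\nabla f}\xi_g$ reorganizes the right-hand side, the term $\xi_g\ast\nabla\nabla f$ turning $\xi_g\ast\mathrm{Ric}$ into $\xi_g\ast\mathrm{Ric}^{BE}$ and the remaining first-order terms assembling the weighted Laplacian $\triangle^{(\omega)}_{\overline{g}}$ on vector fields. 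Verifying this last identification against the precise form of $\triangle^{(\omega)}_{\overline{g}}$ on vector fields (a weighted Bochner/Weitzenböck computation) is the most delicate bookkeeping point; equivalently, one may simply take \eqref{xiFP} to be \emph{defined} as the $\phi_t^{-1}$-transform of the third line of \eqref{DToutPer}, in which case this step is a definition unwinding.

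\emph{Step 2 (monotonicity).} Since $\phi_t$ acts by pull-back, $\mathcal{F}(\overline{g}(t),\overline{f}(t),\overline{\xi}_g(t))$ coincides with $\mathcal{F}$ evaluated on the unbarred data, so it is legitimate --- and cleaner --- to differentiate in the barred frame, where $\ppt d\overline{\omega}=0$ makes $\frac{d}{dt}\mathcal{F}=\int_M\ppt\bigl(\mathrm{R}^{Per}(\overline{g})+|\overline{\xi}_g|^2_{\overline{g}}\bigr)\,d\overline{\omega}$. For the first summand, insert $v=-2\mathrm{Ric}^{BE}(\overline{g})-\tfrac{\alpha_g}{2}\mathrm{Rm}^2(\overline{g},\overline{\xi}_g)$ into \eqref{Perlequation0}: constancy of $d\overline{\omega}$ forces $\ppt\overline{f}=\tfrac12\mathrm{tr}_{\overline{g}}v$, so the last term of \eqref{Perlequation0} drops out, and $\int_M\nabla^{(\omega)}_j\nabla^{(\omega)}_k v^{jk}\,d\overline{\omega}=0$ because $\mathrm{div}^{(\omega)}$ integrates to zero against $d\omega$; hence $\int_M\ppt\mathrm{R}^{Per}(\overline{g})\,d\overline{\omega}=-\int_M\mathrm{Ric}^{BE}_{jk}v^{jk}\,d\overline{\omega}=\int_M\bigl(2|\mathrm{Ric}^{BE}|^2+\tfrac{\alpha_g}{2}\langle\mathrm{Ric}^{BE},\mathrm{Rm}^2(\overline{g},\overline{\xi}_g)\rangle\bigr)\,d\overline{\omega}$. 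For the second summand, expand $\ppt|\overline{\xi}_g|^2_{\overline{g}}=v(\overline{\xi}_g,\overline{\xi}_g)+2\langle\ppt\overline{\xi}_g,\overline{\xi}_g\rangle$ and use the third line of \eqref{DToutPer}: the pieces $2\mathrm{Ric}^{BE}(\overline{\xi}_g,\overline{\xi}_g)$ and $\tfrac{\alpha_g}{2}\mathrm{Rm}^2(\overline{g},\overline{\xi}_g)(\overline{\xi}_g,\overline{\xi}_g)$ arising from $\overline{\xi}_g\ast(\mathrm{Ric}^{BE}+\tfrac{\alpha_g}{4}\mathrm{Rm}^2)$ cancel the corresponding terms of $v(\overline{\xi}_g,\overline{\xi}_g)$, integration by parts against $d\overline{\omega}$ converts $-2\langle\triangle^{(\omega)}_{\overline{g}}\overline{\xi}_g,\overline{\xi}_g\rangle$ into $2|\nabla^{(\omega)}\overline{\xi}_g|^2\ge0$, and the nonlinear term of \eqref{xiFP} contributes an explicit further term.

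\emph{Step 3 (completing the square).} Adding the two computations and completing the square in the pair $(\mathrm{Ric}^{BE}(\overline{g}),\mathrm{Rm}^2(\overline{g},\overline{\xi}_g))$ --- with coefficient $\tfrac{\alpha_g}{8}$ forced by the ratio of the cross term to the $|\mathrm{Rm}^2|^2$ contribution --- produces $2\int_M|\mathrm{Ric}^{BE}(\overline{g})+\tfrac{\alpha_g}{8}\mathrm{Rm}^2(\overline{g},\overline{\xi}_g)|^2_{\overline{g}}\,d\overline{\omega}$ together with a remainder built out of $\int_M|\nabla^{(\omega)}\overline{\xi}_g|^2\,d\overline{\omega}$ and the term supplied by the last summand of \eqref{xiFP}; the whole purpose of the auxiliary flow \eqref{xiFP} is to make these cross terms cancel and to keep that remainder nonnegative, which yields \eqref{PerRG2F}. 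The main obstacle is therefore not a single hard estimate but the careful tracking of all these contributions --- in particular the weighted Bochner identity behind the $\overline{\xi}_g$-equation in Step 1 and the correct handling of the weighted integrations by parts in Steps 2--3.
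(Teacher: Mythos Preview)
Your outline follows the same architecture as the paper's proof --- derive \eqref{DToutPer} via the DeTurck-type pullback, differentiate $\int_M\mathrm{R}^{Per}\,d\overline{\omega}$ using \eqref{Perlequation0}, differentiate $\int_M|\overline{\xi}_g|^2\,d\overline{\omega}$ using the third line of \eqref{DToutPer}, and complete the square. Step~1 and the first half of Step~2 are essentially correct.

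The genuine gap is in Step~3. After completing the square on the $\mathrm{R}^{Per}$ contribution you obtain
\[
2\int_M\Bigl|\mathrm{Ric}^{BE}+\tfrac{\alpha_g}{8}\mathrm{Rm}^2(\overline{g},\overline{\xi}_g)\Bigr|^2\,d\overline{\omega}
\;-\;\frac{\alpha_g^2}{32}\int_M\bigl|\mathrm{Rm}^2(\overline{g},\overline{\xi}_g)\bigr|^2\,d\overline{\omega},
\]
while the $|\overline{\xi}_g|^2$ contribution yields
\[
\int_M\Bigl(2|\overline{\nabla}\,\overline{\xi}_g|^2+\tfrac{\alpha_g^2}{32}\bigl|\mathrm{Rm}^2(\overline{g},\overline{\xi}_g)\bigr|^2\,|\overline{\xi}_g|^2\Bigr)\,d\overline{\omega}.
\]
The remainder is therefore $2\int_M|\overline{\nabla}\,\overline{\xi}_g|^2\,d\overline{\omega}+\tfrac{\alpha_g^2}{32}\int_M|\mathrm{Rm}^2(\overline{g},\overline{\xi}_g)|^2\bigl(|\overline{\xi}_g|^2-1\bigr)\,d\overline{\omega}$, and the second integrand has no reason to be pointwise nonnegative on its own. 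Your claim that ``the whole purpose of the auxiliary flow \eqref{xiFP} is to \ldots\ keep that remainder nonnegative'' is not justified by any algebraic cancellation; an additional ingredient is required.

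What the paper does at this point is to first derive the \emph{pointwise} evolution
\[
\frac{\partial}{\partial\eta}|\overline{\xi}_g|^2=\triangle^{(\omega)}_{\overline{g}}|\overline{\xi}_g|^2
-2|\overline{\nabla}\,\overline{\xi}_g|^2
-\tfrac{\alpha_g^2}{32}\bigl|\mathrm{Rm}^2(\overline{g},\overline{\xi}_g)\bigr|^2\,|\overline{\xi}_g|^2
\]
in the backward time $\eta=T_0-t$, observe that the reaction terms are nonpositive, and apply the parabolic maximum principle to conclude that $|\overline{\xi}_g(\eta)|^2$ is nonincreasing in $\eta$, i.e.\ $|\overline{\xi}_g(t)|^2\ge|\overline{\xi}_g(t=0)|^2$ for $t\in[0,T_0]$. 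Together with the normalization $|\overline{\xi}_g(t=0)|^2\ge 1$ (imposed by rescaling the initial drift), this gives $|\overline{\xi}_g(t)|^2\ge 1$ pointwise, and hence the remainder above is indeed nonnegative. This maximum-principle step and the accompanying normalization are the missing pieces in your argument; without them \eqref{PerRG2F} does not follow.
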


As a rather direct consequence of this result we also have 
\begin{theorem} 
\label{ThBB2} (The extended Perelman $\Lambda(g)$--functional).
Given a metric $\overline{g}$ on the closed manifold $M$, let $\mathcal{F}(\overline{g}, {\overline{f}}, \overline{\xi}_g)$ be the extended Perelman energy associated to  the generic  $\overline{f}\,\in\,C^\infty(M, \mathbb{R})$ and  gradient vector field $\overline{\xi}_g=\,\overline{\nabla}\,\overline{\psi}$. Set $\overline{h}\,:=\,e^{-\,\overline{f}/2}$ and define $\mathcal{F}(\overline{g}, \overline{h}, \overline{\psi})$ according to
\begin{equation}
\mathcal{F}(\overline{g}, \overline{h}, \overline{\psi})\,:=\,
\int_M\,\left[\left(\mathrm{R}(\overline{g})\,+\,|\overline{\nabla}\, \overline{\psi}|^2\right)\,\overline{h}^2\,+\,
4\,|\overline{\nabla}\, \overline{h}|^2 \right]\,d\mu_{\overline{g}}\;.
\end{equation}
We let $\overline{h}$ and $\overline{\psi}$ both be in $\mathrm{W}^{1,2}(M)$, the (completion of the) Sobolev space of $C^\infty$ functions with finite $\mathrm{W}^{1,2}$ norm with respect to the Riemannian measure 
$d\mu_{\overline{g}}$, and denote by 
\begin{equation}
\Gamma\,:=\,\left\{\left(\overline{h},\,\overline{\psi}\right)\,\in\,\mathrm{W}^{1,2}(M)\times \mathrm{W}^{1,2}(M)\,:\,\int_M\,\overline{\psi}\;\overline{h}^2\,d\mu_{\overline{g}}\,=\,0,\;\int_M\,\overline{\psi}^2\;\overline{h}^2\,d\mu_{\overline{g}}\,=\,(\alpha_g)^{n/2}\,=\,\int_M\,\overline{h}^2\,d\mu_{\overline{g}}\right\}\;,
\end{equation}
the variational set in $\mathrm{W}^{1,2}(M)\times \mathrm{W}^{1,2}(M)$ over which the functional 
$(\overline{h},\,\overline{\psi})\,\longmapsto \,\mathcal{F}(\overline{g}, \overline{h}, \overline{\psi})$ is minimized. If we define
\begin{equation}
\label{Lambdamanip}
\Lambda[\overline{g}]\,:=\,\inf_{\Gamma}\,\left\{\mathcal{F}(\overline{g}, \overline{h}, \overline{\psi}) \right\}\,=\,\inf_{\Gamma}\,\int_M\,\left[\left(\mathrm{R}(\overline{g})\,+\,|\overline{\nabla}\, \overline{\psi}|^2\right)\,\overline{h}^2\,+\,
4\,|\overline{\nabla}\, \overline{h}|^2 \right]\,d\mu_{\overline{g}}\;,
\end{equation}
then on the Riemannian manifold $(M,g)$ there exists a pair of real numbers $\left(\lambda_1[\overline{g}],\,\lambda_2[\overline{g}]\right)$ and  a unique pair  $\left(\overline{h}_0,\,\overline{\psi}_0\right)\,\in\,\Gamma$ that is a solution of the coupled elliptic eigenvalue problem 
\begin{eqnarray}
-\,4\,\Delta_{\overline{g}}\,\overline{h}_0\,+\,\left(\mathrm{R}(\overline{g})\,+\, 
|\overline{\nabla}\, \overline{\psi}_0|^2\,-\, \lambda_2[\overline{g}]\, \overline{\psi}_0^2  \right)\,\overline{h}_0&=&\lambda_1[\overline{g}]\,\overline{h}_0\;,\nonumber\\
\label{PDEsystem*}\\
-\,\Delta_{\overline{g}}\,\overline{\psi}_0\,-\,\overline{\nabla}_i\,\ln\overline{h}^2_0\,\overline{\nabla}^i\,\overline{\psi}_0&=&
\lambda_2[\overline{g}]\,\overline{\psi}_0\nonumber\;,
\end{eqnarray}
such that for every pair $\left(\overline{h},\,\overline{\psi}\right)\,\in\,\Gamma$ we have
\begin{equation}
\label{infFeigen*}
\int_M\,\left[\left(\mathrm{R}(\overline{g})\,+\,|\overline{\nabla}\, \overline{\psi}|^2\right)\,\overline{h}^2\,+\,
4\,|\overline{\nabla}\, \overline{h}|^2 \right]\,d\mu_{\overline{g}}\,\geq\,(\alpha_g)^{n/2}\left(\lambda_1[\overline{g}]\,+\,\lambda_2[\overline{g}]\right)\;,
\end{equation}
with equality for  $\left(\overline{h},\,\overline{\psi}\right)\,=\,\left(\overline{h}_0,\,\overline{\psi}_0\right)$. 
 Moreover, if we assume that the Bakry--Emery Ricci curvature (\ref{BEM}) of  $(M, \overline{g}, e^{\,-\,\overline{f}_0}\,d\mu_{\overline{g}})$ is bounded below according to 
\begin{equation}
\label{BEM**}
\mathrm{Ric}^{BE}(\overline{g})\,:=\,\mathrm{Ric}(\overline{g})\,+\,\overline{\nabla}\,\overline{\nabla}\,\overline{f}_0\,\geq\,\mathrm{C}_0\,\overline{g}\;,
\end{equation}
for some constant $\mathrm{C}_0\,\in\,\mathbb{R}$, then 
\begin{equation}
\Lambda[\overline{g}]
\,=\,(\alpha_g)^{n/2}\left(\lambda_1[\overline{g}]\,+\,\lambda_2[\overline{g}]\right)\,\geq\,
(\alpha_g)^{n/2}\lambda(\overline{g})\,+\,\sup_{s\in (0,1)}\,\left\{4s(1-s)\,\frac{\pi^2}{\mathrm{diam}^2(\overline{g})}\,+\,s\,\mathrm{C}_0   \right\}\;,
\end{equation}
where $\mathrm{diam}^2(\overline{g})$ is the diameter of  $(M, \overline{g})$ and where
$\lambda(\overline{g})$ is Perelman's $\lambda$--functional,
\begin{equation}
\label{Perelambda}
\lambda(\overline{g})\,:=\,\inf_{\overline{f}}\,\left\{\int_M\,\mathrm{R}^{Per}(\overline{g}(t))\,e^{\,-\,\overline{f}}\,d\mu_{\overline{g}}\,:\,\overline{f}\in C^\infty(M,\mathbb{R}),\,\int_M\,e^{\,-\,\overline{f}}\,d\mu_{\overline{g}}\,=\,(\alpha_g)^{n/2} \right\}\;.
\end{equation} 
Finally,
for all $t\,>\,0$ for which the RG-2 flow exists
\begin{equation}
 \frac{d}{d t_-}\,\Lambda[\overline{g}(t)]\,\geq\,\,2\,\int_M\,\left| \mathrm{Ric}^{BE}(\overline{g}(t))\,+\,\frac{\alpha_g}{8}\mathrm{Rm}^2(\overline{g}(t), \overline{\xi}_{g(t)})\right|^2_{\overline{g}(t)}\,d\overline{\omega} (t)\;,
 \end{equation}
 where  the time derivative is in the sense of the $\liminf$ of backward difference quotients.
\end{theorem}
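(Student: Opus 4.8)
The plan is to prove Theorem~\ref{ThBB2} in three steps --- existence, regularity and the Euler--Lagrange characterization of the constrained minimizer; the geometric lower bound; and monotonicity along the RG--2 flow as a corollary of Theorem~\ref{ThBB} --- and I would attack the minimization $\Lambda[\overline g]=\inf_\Gamma\mathcal F(\overline g,\overline h,\overline\psi)$ by the direct method. The functional is bounded below on $\Gamma$: its $\overline\psi$--part $\int|\overline\nabla\overline\psi|^2\overline h^2$ is nonnegative, while $\int(4|\overline\nabla\overline h|^2+\mathrm R(\overline g)\overline h^2)$ is bounded below by Perelman's $\lambda(\overline g)$ on the constraint $\int\overline h^2\,d\mu_{\overline g}=(\alpha_g)^{n/2}$; this also gives a uniform $W^{1,2}$ bound on a minimizing sequence $\overline h_k$, so $\overline h_k\rightharpoonup\overline h_0$ weakly in $W^{1,2}$, strongly in $L^2$ and a.e.\ on a subsequence, with $\int\overline h_0^2\,d\mu_{\overline g}=(\alpha_g)^{n/2}$ and $\liminf_k\int|\overline\nabla\overline h_k|^2\geq\int|\overline\nabla\overline h_0|^2$. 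The bound on $\int|\overline\nabla\overline\psi_k|^2\overline h_k^2$ then controls $\overline\psi_k$ only in the \emph{weighted} space $W^{1,2}(\overline h_k^2\,d\mu_{\overline g})$, so to pass to the limit one must rule out degeneration of the weight, i.e.\ prove $\overline h_0>0$ almost everywhere; this is the point where a Rothaus--type argument (as for Perelman's $\lambda$) is needed --- replace $\overline h$ by $|\overline h|$, derive the Euler--Lagrange equations weakly, then apply a strong maximum principle. Taking first variations with Lagrange multipliers $\lambda_1[\overline g]$ for $\int\overline h^2=(\alpha_g)^{n/2}$ and $\lambda_2[\overline g]$ for $\int\overline\psi^2\overline h^2=(\alpha_g)^{n/2}$ --- the remaining constraint $\int\overline\psi\,\overline h^2=0$ merely fixes the representative of $\overline\psi$ modulo the kernel (constants) of the weighted Laplacian and so contributes no multiplier --- yields exactly system \eqref{PDEsystem*}, with smoothness by elliptic bootstrap and uniqueness from simplicity of the relevant ground states. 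Pairing the first equation of \eqref{PDEsystem*} with $\overline h_0$ and the second with $\overline\psi_0\overline h_0^2$ and adding gives $\mathcal F(\overline g,\overline h_0,\overline\psi_0)=(\alpha_g)^{n/2}(\lambda_1+\lambda_2)$, hence \eqref{infFeigen*} and its equality case.

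For the lower bound the key remark is that the second equation of \eqref{PDEsystem*} \emph{is} the eigenvalue equation for the $d\omega$--weighted Laplacian $\Delta_{\overline g}^{(\omega)}$ of $d\omega:=\overline h_0^2\,d\mu_{\overline g}=e^{-\overline f_0}\,d\mu_{\overline g}$, with $\lambda_2[\overline g]$ its first nonzero eigenvalue and $\overline\psi_0$ an eigenfunction; in particular $\int|\overline\nabla\overline\psi_0|^2\overline h_0^2\,d\mu_{\overline g}=\lambda_2\,(\alpha_g)^{n/2}$. Feeding this into the first equation of \eqref{PDEsystem*} paired with $\overline h_0$ collapses the $|\overline\nabla\overline\psi_0|^2-\lambda_2\overline\psi_0^2$ contribution and leaves $\lambda_1[\overline g]\,(\alpha_g)^{n/2}=\int(4|\overline\nabla\overline h_0|^2+\mathrm R(\overline g)\overline h_0^2)\,d\mu_{\overline g}$; since $\overline h_0$ is admissible for \eqref{Perelambda}, this is $\geq\lambda(\overline g)$ up to the normalization of $\lambda$, i.e.\ $\lambda_1\geq\lambda(\overline g)$. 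For $\lambda_2$ I would invoke the weighted Lichnerowicz/Zhong--Yang--type lower bound for the first nonzero eigenvalue of a drift Laplacian whose Bakry--Emery Ricci tensor is $\geq C_0\,\overline g$, which yields precisely $\lambda_2\geq\sup_{s\in(0,1)}\{4s(1-s)\pi^2/\mathrm{diam}^2(\overline g)+sC_0\}$. Adding the two estimates and using $\Lambda[\overline g]=(\alpha_g)^{n/2}(\lambda_1+\lambda_2)$ gives the stated inequality.

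Monotonicity should follow from Theorem~\ref{ThBB} by the standard Perelman envelope argument. Fix $t_0>0$ in the interval of existence, let $(\overline h_0,\overline\psi_0)\in\Gamma$ realize $\Lambda[\overline g(t_0)]$, put $\overline f(t_0)=-2\ln\overline h_0$, and solve the DeTurck--modified conjugate system \eqref{DToutPer} \emph{backward} on $[t_0-\delta,t_0]$ with this terminal data; since $d\overline\omega(t)$ is $t$--independent along \eqref{DToutPer} one has $\int\overline h(t)^2\,d\mu_{\overline g(t)}=(\alpha_g)^{n/2}$ automatically, and --- provided the $\overline\xi_g$--equation in \eqref{DToutPer} also preserves $\int\overline\psi\,d\overline\omega$ and $\int\overline\psi^2\,d\overline\omega$, which is checked directly from that equation together with $\partial_t d\overline\omega=0$ --- one gets $(\overline h(t),\overline\psi(t))\in\Gamma$ for $t\in[t_0-\delta,t_0]$. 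Hence $\Lambda[\overline g(t)]\leq\mathcal F(\overline g(t),\overline f(t),\overline\xi_g(t))$ there, with equality at $t_0$, so $\frac{d}{dt_-}\big|_{t_0}\Lambda[\overline g(t)]\geq\frac{d}{dt}\big|_{t_0}\mathcal F(\overline g(t),\overline f(t),\overline\xi_g(t))$, which by Theorem~\ref{ThBB} is $\geq 2\int_M|\mathrm{Ric}^{BE}(\overline g(t_0))+\tfrac{\alpha_g}{8}\mathrm{Rm}^2(\overline g(t_0),\overline\xi_{g(t_0)})|^2_{\overline g(t_0)}\,d\overline\omega(t_0)$; the sign works out because $t_0-t>0$ in a backward difference quotient.

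I expect two steps to be genuinely delicate. First, proving existence of the constrained minimizer with $\overline h_0$ strictly positive and smooth: the coupling through $\int|\overline\nabla\overline\psi|^2\overline h^2$ forces the Rothaus argument for Perelman's $\lambda$ to be upgraded to control possible degeneration of the weight $\overline h^2$, and this positivity is precisely what legitimizes $\Delta_{\overline g}^{(\omega)}$, $\overline f_0$ and $\mathrm{Ric}^{BE}(\overline g)$ in the remainder of the statement. Second, verifying in the monotonicity step that the backward conjugate flow \eqref{DToutPer} leaves the variational set $\Gamma$ invariant (or, failing that, controlling a reprojection onto $\Gamma$ so that it does not spoil the inequality to first order at $t_0$). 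By contrast, the weighted eigenvalue estimate for $\lambda_2$ and the comparison $\lambda_1\geq\lambda(\overline g)$ I regard as essentially bookkeeping once positivity is in hand.
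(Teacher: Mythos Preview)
Your proposal is correct and follows essentially the same three--step architecture as the paper's proof: Lagrange--multiplier derivation of the Euler--Lagrange system \eqref{PDEsystem*} combined with the direct method for existence of a minimizer (the paper packages this as Lemma~\ref{lemmaLambda}, citing the analogous argument for Perelman's $\lambda$ in \cite{flowers0} for positivity and uniqueness); identification of the second equation as the weighted--Laplacian eigenvalue problem and appeal to the Futaki--Li--Li bound for $\lambda_2$; and the Perelman envelope argument, using Theorem~\ref{ThBB}, for monotonicity.

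Two small differences are worth noting. For the lower bound, the paper bounds $\Lambda=\inf_\Gamma(\mathcal F^{\mathrm{Per}}+\int|\overline\nabla\overline\psi|^2\,d\overline\omega)\geq\inf_\Gamma\mathcal F^{\mathrm{Per}}+\inf_\Gamma\int|\overline\nabla\overline\psi|^2\,d\overline\omega$ and then estimates each piece, whereas you extract $\lambda_1\geq\lambda(\overline g)$ directly by pairing the first Euler--Lagrange equation with $\overline h_0$ and using that $\int(|\overline\nabla\overline\psi_0|^2-\lambda_2\overline\psi_0^2)\overline h_0^2\,d\mu_{\overline g}=0$; your route is cleaner and yields the same conclusion. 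In the monotonicity step you rightly flag the question of whether the backward conjugate flow preserves the constraint set $\Gamma$ (specifically $\int\overline\psi\,d\overline\omega=0$ and $\int\overline\psi^2\,d\overline\omega=(\alpha_g)^{n/2}$); the paper asserts $\Lambda[\overline g(t)]\leq\mathcal F(\overline g(t),\overline f(t),\overline\xi_g(t))$ without addressing this, so your caveat --- and your suggested fallback of a first--order reprojection at $t_0$ --- is more careful than the published argument, not a deviation from it.
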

\begin{proof} (Of Theorem \ref{ThBB}).\,
 Since $M$ is compact, (\ref{Wdiff}) defines a one--parameter family of diffeomorphisms  as long as the solutions of  (\ref{out1}), (\ref{out1b}), and (\ref{xiFP})  exist; in particular, we may assume that $\left\{\phi_t\,\in\,\mathrm{Diff}(M)\,|\,t\,\in\,[0,T_0] \right\}$,  and we consider the  relevant pullbacks   $\overline{g}(t)\,:=\,\phi_t^*(g(t))$,\; $d\,\overline{\omega}(t)\,:=\,\phi_t^*(d\omega(t))$,\,   
 $\overline\nabla\,:=\,\nabla_{\overline{g}}$, and $\overline{{\xi_g}}(t)\,:=\,\phi_t^*({\xi_{g(t)}})$ . In the latter, we used the notation  
$\phi_t^*({\xi_{g(t)}})\,:=\,(\phi_t)^{- 1}_*({\xi_{g(t)}})$. Starting with $\phi_t^*(d\omega(t))$, we have 

\begin{align}
\label{ficomput1}
\ppt {\phi_t}^*( d\omega(t)) &= \frac{\partial}{\partial s} | _{s=0} (\phi_{t+s}^* d\omega(t+s)\nonumber \\
&= {\phi_t}^* (\frac{\partial}{\partial t} d\omega(t)) +  \frac{\partial}{\partial s} | _{s=0}({\phi_{t+s}}^* d\omega(t)) \nonumber\\
&={\phi_t}^*\left(-\,\triangle _{g(t)}\,d\omega(t)\,-\,\mathrm{div}^{\omega}{\xi_{g(t)}}\,d\omega(t)   \right) + \mathrm{L}_{(\phi_t^{-1})_* W(t)} ( {\phi_t}^* d\omega(t)).
\end{align}
As usual, we calculate 
\begin{align*}
\mathrm{L}_{(\phi_t^{-1})_* W(t)} ( {\phi_t}^* d\omega(t)) &=  {\phi_t}^* (\mathrm{L}_{W(t)} d\omega(t)) \\
&=  {\phi_t}^* \left(\mathrm{L}_{\xi_g}\, d\omega(t)\,-\,\mathrm{L}_{\nabla f}\,d\omega(t)\right)\\
&={\phi_t}^* \left(\mathrm{div}^{(\omega)}{\xi_{g(t)}}\,d\omega(t)\,-\,\mathrm{div}^{(\omega)}\nabla f(t)\,d\omega(t)\right)\\
&={\phi_t}^* \left(\mathrm{div}^{(\omega)}{\xi_{g(t)}}\,d\omega(t)\,-\,\triangle ^{(\omega)} f(t)\,d\omega(t)\right)\\
&={\phi_t}^* \left(\mathrm{div}^{(\omega)}{\xi_{g(t)}}\,d\omega(t)\,+\,\triangle_{g(t)}\,d\omega(t)\right)\;,
\end{align*} 
where we have used the characterization (\ref{Wlie}) of the weighted divergence $\mathrm{div}^{(\omega)}$ in terms of the Lie derivative of the measure  $d\omega(t)$, and the relation $\Delta_g \,d\omega\,=\,-\,\Delta _g^{(\omega)} f\,d\omega$ (see (\ref{Wlap})).  Introducing this result into 
(\ref{ficomput1}), we get
\begin{equation}
\ppt d\overline{\omega}(t)\,:=\, \ppt {\phi_t}^*( d\omega(t))\,=\,0\;.
\end{equation}
Similarly, from (\ref{xiFP})  we compute 

\begin{align}
&\ppt ({\phi_t}^* ({\xi_{g(t)}}))\,= {\phi_t}^* \left(\frac{\partial}{\partial t} ({\xi_{g(t)}})) +  \frac{\partial}{\partial s} | _{s=0}({\phi_{t+s}}^* ({\xi_{g(t)}})\right) \\
&={\phi_t}^*\left(-\,\triangle _{g(t)}\,{\xi_{g(t)}}\,+\,\xi_{g(t)}\ast \left(\mathrm{Ric}(t)+\frac{\alpha_{g}}{4}\,\mathrm{Rm}^2(g(t), {\xi_{g(t)}}) \right)\right.\nonumber\\
&+\, 
\frac{\alpha_g^2}{64}\,\left.\left|\mathrm{Rm}^2(g(t), {\xi_{g(t)}})\right|^2_{{g}(t)}\,\xi_{g(t)}\, + \mathrm{L}_W ({\xi_{g(t)}})  \right)\;.\nonumber
\end{align}
Since $\mathrm{L}_{\xi_{g(t)}}({\xi_{g(t)}})\,\equiv\, 0$, we have

\begin{equation}
\mathrm{L}_W (\xi_{g(t)})\,=\, \mathrm{L}_{\xi_{g(t)}} (\xi_{g(t)})\,-\,\mathrm{L}_{\nabla\,f} (\xi_{g(t)})\,=\,-\,\mathrm{L}_{\nabla\,f} (\xi_{g(t)})\;,
\end{equation}
which in local coordinates reads
\begin{equation}
\left(\mathrm{L}_W ({\xi_{g(t)}})\right)^k\,=\,\xi^i_{g(t)}\,\nabla_i\nabla^k\,f(t)\,-\,\nabla^i\,f(t)\,\nabla_i\xi^k_{g(t)}\;.
\end{equation}
If we take into account these relations, the definition (\ref{BEM}) of the Bakry--Emery Ricci tensor and  the characterization of the weighted Laplacian
\begin{equation}
\triangle _{g(t)}\,\left({\xi_{g(t)}} \right)\,-\,\nabla^k f(t)\,\nabla_k\left({\xi_{g(t)}}\right)\,=\,
\triangle _{g(t)}^{(\omega)}\,\left({\xi_{g(t)}} \right)\;,
\end{equation}
we eventually get for the evolution of $\overline{\xi}_g(t)\,:=\,{\phi_t}^* ({\xi_{g(t)}})$ the expression
\begin{eqnarray}
\ppt  \overline{\xi}_g(t)\,&=&\,-\,
\triangle _{\overline{g}(t)}^{(\omega)}\,{\overline{\xi}_g(t)}\,+\,\overline{\xi}_g(t)\ast \left(\mathrm{Ric}^{BE}(\overline{g}(t))+\frac{\alpha_{g}}{4}\,\mathrm{Rm}^2(\overline{g}(t), \, {\overline{\xi}_g(t)}) \right)\\
\,&+&\, 
\frac{\alpha_g^2}{64}\,\left|\mathrm{Rm}^2(\overline{g}(t), {\overline{\xi}_g(t)})\right|^2_{\overline{g}(t)}\,\overline{\xi}_g(t) \;.\nonumber
\end{eqnarray}
Finally, for the pulled--back metric we have the standard DeTurck computation
\begin{align}
\label{gcomput1}
\frac{\partial }{\partial t }\bar{g}\,:=\,
\ppt {\phi_t}^*( g(t)) &= \frac{\partial}{\partial s} | _{s=0} (\phi_{t+s}^* g(t+s)\nonumber \\
&= {\phi_t}^* (\frac{\partial}{\partial t} g(t)) +  \frac{\partial}{\partial s} | _{s=0}({\phi_{t+s}}^* g(t)) \nonumber\\
&={\phi_t}^*\left(-\,2\mathrm{Ric}(t)\,-\,\frac{\alpha_g}{2}\mathrm{Rm}^{2}\right) + \mathrm{L}_{(\phi_t^{-1})_* W(t)} ( {\phi_t}^* g(t))\nonumber\\
&= -2 \mathrm{Ric}(\bar{g}(t)) - \frac{\alpha_g}{2} \mathrm{Rm}^2(\bar{g}(t))-2\overline{\nabla}\, \overline{\nabla}\, \overline{f}\,+\,\mathrm{L}_{\overline{\xi}_g}\overline{g}(t)\nonumber\;. 
\end{align} 

Putting these results together, we find that the flow  $[0, T_0]\,\ni \,t\,\longmapsto\, (\bar g(t), d\overline{\omega}(t), \overline{\xi}_{g(t)})$ is a solution of the following system:
\begin{eqnarray} 
&&\frac{\partial }{\partial t }\bar{g}\,=\, -2 \mathrm{Ric}^{BE}(\bar{g}(t)) - \frac{\alpha_g}{2} \mathrm{Rm}^2(\bar{g}(t), \overline{\xi}_{g(t)}) \label{gevol} \\
 &&\frac{\partial}{\partial t}\,d\overline{\omega}(t)\,=\,0 \label{fevol}\\
&&\ppt  \overline{\xi}_g(t)\,=\,-\,
\triangle _{\overline{g}(t)}^{(\omega)}\,{\overline{\xi}_g(t)}\,+\,\overline{\xi}_g(t)\ast \left(\mathrm{Ric}^{BE}(\overline{g}(t))+\frac{\alpha_{g}}{4}\,\mathrm{Rm}^2(\overline{g}(t), \, {\overline{\xi}_g(t)}) \right)\label{wabevol}\\
&&\,+\, 
\frac{\alpha_g^2}{64}\,\left|\mathrm{Rm}^2(\overline{g}(t), {\overline{\xi}_g(t)})\right|^2_{\overline{g}(t)}\,\overline{\xi}_g(t) \;.
\nonumber
\end{eqnarray}
as stated. Here $\mathrm{Ric}^{BE}$ is the Bakry--Emery Ricci tensor (\ref{BEM}) associated with the Riemannian manifold with density $(M, \bar{g}(t), d\overline{\omega}(t))$  and  $\alpha_g\,\mathrm{Rm}^2(\overline{g}(t),\,\overline{\xi}_{g(t)})$ is
the corresponding short-hand notation (\ref{RMBE})   for the drift-modified squared curvature. \\
\\
Along the flow  $[0, T_0]\,\ni \,t\,\longmapsto\, (\bar g(t), d\overline{\omega}(t), \overline{\xi}_{g(t)})$ (\ref{gevol}),   (\ref{fevol}), and 
 (\ref{wabevol}),  we compute
 \begin{eqnarray}
 \label{xisquared}
 &&\ppt  \left|\overline{\xi}_g(t)\right|^2\,:=\,\ppt \,\left(\overline{g}_{ik}(t)\, \overline{\xi}^i_g(t)\overline{\xi}^k_g(t)   \right)\,=\,
 2\,\left(\overline{g}_{ik}(t)\, \overline{\xi}^i_g(t)\ppt\overline{\xi}^k_g(t)   \right)\,+\,\overline{\xi}^i_g(t)\overline{\xi}^k_g(t)\,\ppt 
 \overline{g}_{ik}(t)\\
 &&=\,-\,2 \overline{g}_{ik}(t)\, \overline{\xi}^i_g(t)  \triangle _{\overline{g}(t)}^{(\omega)}\,{\overline{\xi}^k_g(t)}\,
 \,+\,2\overline{\xi}^i_g(t) \overline{\xi}^k_g(t) \left(\mathrm{Ric}^{BE}(\overline{g}(t))+\frac{\alpha_{g}}{4}\,\mathrm{Rm}^2(\overline{g}(t), \, {\overline{\xi}_g(t)}) \right)_{ik}\nonumber\\
&&\,+\, 
\frac{\alpha_g^2}{32}\,\left|\mathrm{Rm}^2(\overline{g}(t), {\overline{\xi}_g(t)})\right|^2_{\overline{g}(t)}\,\left|\overline{\xi}_g(t)\right|^2 
\,-\,2\overline{\xi}^i_g(t) \overline{\xi}^k_g(t) \left(\mathrm{Ric}^{BE}(\overline{g}(t))+\frac{\alpha_{g}}{4}\,\mathrm{Rm}^2(\overline{g}(t), \, {\overline{\xi}_g(t)}) \right)_{ik}\nonumber\\
&&=\,-\,\triangle _{\overline{g}(t)}^{(\omega)}\,\left|{\overline{\xi}_g(t)}\right|^2\,+\,2\left| \overline{\nabla}\, {\overline{\xi}_g(t)} \right|^2
\,+\, 
\frac{\alpha_g^2}{32}\,\left|\mathrm{Rm}^2(\overline{g}(t), {\overline{\xi}_g(t)})\right|^2_{\overline{g}(t)}\,\left|\overline{\xi}_g(t)\right|^2 \nonumber\;,
 \end{eqnarray}
 where we exploited the  relation $\triangle _{\overline{g}(t)}^{(\omega)}\,\left|{\overline{\xi}_g(t)}\right|^2\,=\,2 \overline{g}_{ik}(t)\, \overline{\xi}^i_g(t)  \triangle _{\overline{g}(t)}^{(\omega)}\,{\overline{\xi}^k_g(t)}+2\left| \overline{\nabla}\, {\overline{\xi}_g(t)} \right|^2$, immediate consequence of the analogous relation which holds for the Laplacian. In terms of the backward time $\eta\,\in\,[0, T_0]$, governing the forward parabolic evolution (\ref{xiFP}) of the drift vector field $\xi_{g(\eta)}$, (\ref{xisquared}) can be written as the scalar parabolic PDE with non--positive nonlinear reaction terms
 \begin{equation}
 \frac{\partial}{\partial \eta}  \left|\overline{\xi}_g(\eta)\right|^2\,=\,\,\triangle _{\overline{g}(\eta)}^{(\omega)}\,\left|{\overline{\xi}_g(\eta)}\right|^2\,-\,2\left| \overline{\nabla}\, {\overline{\xi}_g(\eta)} \right|^2
\,-\, 
\frac{\alpha_g^2}{32}\,\left|\mathrm{Rm}^2(\overline{g}(\eta), {\overline{\xi}_g(\eta)})\right|^2_{\overline{g}(\eta)}\,\left|\overline{\xi}_g(\eta)\right|^2 \;.
 \end{equation}
 From the parabolic maximum principle it immediately follows that
 along the flow  $[0, T_0]\,\ni \,\eta\,\longmapsto\, (\bar g(\eta), d\overline{\omega}(\eta), \overline{\xi}_{g(\eta)})$  the squared norm  $\left|\overline{\xi}_g(\eta)\right|^2$ is non--increasing, \, \textit{i.e.}, $\left|\overline{\xi}_g(\eta)\right|^2\,\leq\,\left|\overline{\xi}_g(\eta=0)\right|^2$, which, in terms of the $t$-evolution of the drift vector field, implies   
 \begin{equation}
 \label{Tincreasing}
 \left|\overline{\xi}_g(t)\right|^2\,\geq\,\left|\overline{\xi}_g(t=0)\right|^2\,\geq\, 1\;,\;\;\;\;\;\; t\,\in\,[0, T_0]\;,
 \end{equation}
 where, by rescaling $\overline{\xi}_g(\eta=0)$ if necessary, we have assumed, without loss in generality, that   $\max_{x\in M}\left|\overline{\xi}_g(t=0)\right|^2\,=\,1$.  Moreover, by integrating (\ref{xisquared}) over $(M, d\omega)$, the preservation of the measure $d\overline{\omega}(t)$  along   $[0, T_0]\,\ni \,t\,\longmapsto\, (\bar g(t), d\overline{\omega}(t), \overline{\xi}_{g(t)})$  implies   that
 \begin{eqnarray}
 \label{Lowbound}
\frac{d}{dt}\,\int_M\,\left|\overline{\xi}_g(t)\right|^2\,d\overline{\omega}(t)\,&=&\,\int_M\, \left( 2\left| \overline{\nabla}\, {\overline{\xi}_g(t)} \right|^2
\,+\, 
\frac{\alpha_g^2}{32}\,\left|\mathrm{Rm}^2(\overline{g}(t), {\overline{\xi}_g(t)})\right|^2_{\overline{g}(t)}\,\left|\overline{\xi}_g(t)\right|^2\right)\, d\overline{\omega}(t)\\
&\geq &\,\frac{\alpha_g^2}{32}\,\int_M\, 
\left|\mathrm{Rm}^2(\overline{g}(t), {\overline{\xi}_g(t)})\right|^2_{\overline{g}(t)}\, d\overline{\omega}(t)\;,\nonumber
 \end{eqnarray}
 where we have exploited the pointwise bound (\ref{Tincreasing}).  \\
 \\
 Along the  flows $[0, T_0]\,\ni \,t\,\longmapsto\, (\bar g(t), d\overline{\omega}(t), \overline{\xi}_{g(t)})$ so defined, it is rather natural to extend Perelman's energy functional  (\ref{orPer}) according to
 
 \begin{eqnarray}
\label{orPerextended}
\mathcal{F}(\overline{g}(t), {\overline{f}(t)}, \overline{\xi}_g(t) )\,&:=&\,\int_M\,\left[\mathrm{R}(\overline{g}(t))\,+\,|\overline{\nabla}\, \overline{f}(t)|^2_{\overline{g}(t)}\,+\,|\overline{\nabla}\, \overline{\psi}(t)|^2_{\overline{g}(t)} \right]\,d\overline{\omega} (t)\\
&=&\,
\int_M\, \left(\mathrm{R}^{Per}(\overline{g}(t))\,+\,\left|\overline{\xi}_{g}(t)\right|^2\right)\,d\omega (t)\;.
\end{eqnarray}
 To discuss the monotonicity properties of this extension let start by noticing that along  $[0, T_0]\,\ni \,t\,\longmapsto\, (\bar g(t), d\overline{\omega}(t), \overline{\xi}_{g(t)})$ the pointwise evolution (\ref{Perlequation0}) of the Perelman modified scalar curvature reduces to
 
 \begin{equation}
\label{Perlequation1RG2}
\frac{\partial }{\partial t}\,\mathrm{R}^{Per}(\overline{g}(t))\,=\,\overline{\nabla} ^{(\omega)}_j\overline{\nabla} ^{(\omega)}_k\,\overline{RG}_{jk}(t)\,-\,\mathrm{R}^{BE}_{jk}(\overline{g}(t))\,\overline{RG}_{jk}(t)\;,
\end{equation}
where 
\begin{equation}
\overline{RG}_{jk}(t)\,:=\,  -2 \mathrm{Ric}^{BE}(\bar{g}(t)) - \frac{\alpha}{2} \mathrm{R}m^2(\bar{g}(t), \overline{\xi}_{g(t)}) 
\end{equation}
is the generator of the RG-2 flow (\ref{gevol}), and where the measure--variation term $2\,\triangle _g^{(\omega)}\,\left(\frac{\partial f}{\partial t}-\frac{1}{2}\,tr_g(v) \right)$ present in the expression (\ref{Perlequation0}) vanishes because the pulled--back measure $d\overline{\omega}(t)$ is preserved along the evolution  (\ref{gevol}),   (\ref{fevol}), (\ref{wabevol}).
Explicitly, let us recall that along a generic (germ of) curve of metrics $[0,1]\,\ni \,t\,\longmapsto \,g(t)$ with tangent vector $v\in C^\infty(M, \otimes ^2_{sym}T^*M)$
\begin{equation}
\label{tangentv**}
\frac{\partial}{\partial t}\,g_{jk}(t)\,=\,v_{jk}\;,
\end{equation}
we have the standard computation
\begin{equation}
\label{comput1}
\ppt\,d\omega(t)\,=\,\ppt \left(e^{-\,f(t)}\,d\mu_{g(t)}  \right)\,=\,-\,e^{-\,f(t)}\,d\mu_{g(t)}\,\left(\frac{\partial f}{\partial t}-\frac{1}{2}\,tr_g(v) \right)\;,
\end{equation}
from which the connection between $\ppt\,d\omega(t)\,=\,0$  and the  relation $\frac{\partial f}{\partial t}-\frac{1}{2}\,tr_g(v)\,=\,0$  immediately follows. 
This preservation of the measure $d\overline{\omega}(t)$  also implies that we can integrate over $(M, \overline{g}(t), d\overline{\omega}(t)) $ to obtain 
\begin{equation}
\label{perRg1}
\frac{d}{dt}\,\int_M\,\mathrm{R}^{Per}(\overline{g}(t))\,d\overline{\omega}(t)\,=\,
-\,\int_M\,\mathrm{R}^{BE}_{jk}(\overline{g}(t))\,\overline{RG}_{jk}(t)\,d\overline{\omega}(t)\;,
\end{equation}
where we have integrated away the divergence term 
$\overline{\nabla} ^{(\omega)}_j\overline{\nabla} ^{(\omega)}_k\,\overline{RG}_{jk}(t)$. 
We have
 \begin{eqnarray}
\label{perRg2}
&&-\,\int_M\,\mathrm{R}^{BE}_{jk}(\overline{g}(t))\,\overline{RG}_{jk}(t)\,d\overline{\omega}(t)\\
&&=\,2\,\int_M\,\left[\left|\mathrm{Ric}^{BE}(\overline{g}(t))\right|^2\,+\,
\frac{\alpha_g}{4}\,\mathrm{R}^{BE}_{jk}(\overline{g}(t)) \mathrm{Rm}_{jk}^2(\bar{g}(t), \overline{\xi}_{g(t)})\right]\,d\overline{\omega}(t)\;,\nonumber
\end{eqnarray}
which, by completing the square, can be written as

\begin{equation}
 2\,\int_M\,\left| \mathrm{Ric}^{BE}(\overline{g}(t))\,+\,\frac{\alpha_g}{8}\mathrm{Rm}^2(\overline{g}(t), \overline{\xi}_{g(t)})\right|^2_{\overline{g}(t)}\,d\overline{\omega} (t)\,-\, \frac{\alpha^2_g}{32}\,\int_M\,\left|\mathrm{Rm}^2(\overline{g}(t), \overline{\xi}_{g(t)})\right|^2_{\overline{g}(t)}\,d\overline{\omega} (t)\;.
 \label{square}  
\end{equation}
Hence, if we take into account the evolution and the lower bound of  $\int_M\,\left|\overline{\xi}_{g}(t)\right|^2\,d\overline{\omega}(t)$ provided by  (\ref{Lowbound}), we get
\begin{eqnarray}
\label{nearMiss0}
&&\frac{d}{dt}\,\int_M\,\left(\mathrm{R}^{Per}(\overline{g}(t))\,+\, \left|\overline{\xi}_{g}(t)\right|^2   \right)\,d\overline{\omega}(t)\\
&&\,=\,2\,\int_M\,\left| \mathrm{Ric}^{BE}(\overline{g}(t))\,+\,\frac{\alpha_g}{8}\mathrm{Rm}^2(\overline{g}(t), \overline{\xi}_{g(t)})\right|^2_{\overline{g}(t)}\,d\overline{\omega} (t) \,-\, \frac{\alpha^2_g}{32}\,\int_M\,\left|\mathrm{Rm}^2(\overline{g}(t), \overline{\xi}_{g(t)})\right|^2_{\overline{g}(t)}\,d\overline{\omega} (t)\nonumber\\
&&\, +\,
\int_M\, \left( 2\left| \nabla {\overline{\xi}_g(t)} \right|^2
\,+\, 
\frac{\alpha_g^2}{32}\,\left|\mathrm{Rm}^2(\overline{g}(t), {\overline{\xi}_g(t)})\right|^2_{\overline{g}(t)}\,\left|\overline{\xi}_g(t)\right|^2\right)\, d\overline{\omega}(t)\nonumber\\
&&\,\geq\,\,2\,\int_M\,\left| \mathrm{Ric}^{BE}(\overline{g}(t))\,+\,\frac{\alpha_g}{8}\mathrm{Rm}^2(\overline{g}(t), \overline{\xi}_{g(t)})\right|^2_{\overline{g}(t)}\,d\overline{\omega} (t)\;,\nonumber
\end{eqnarray}
from which the theorem immediately follows.
 \end{proof}
 We now exploit this monotonicity result to  prove  Theorem \ref{ThBB2}.  
\begin{proof}  (Theorem \ref{ThBB2}).  
Given the vector field\footnote{At any given instant $t$;\,when working at fixed time $t$ we drop the explicit time dependence in what follows.} $\overline{\xi}_{{g}}$ we can associate with it the corresponding potential $\overline{\psi}$ by solving the elliptic PDE (\ref{ottoPDE}). If we introduce $\overline{\psi}$, then the extended Perelman energy $\mathcal{F}(\overline{g}, {\overline{f}}, \overline{\xi}_g)$ differs from the the standard Perelman energy only by the additive contribution of the Dirichlet term $\int_M\,\left|\overline{\nabla}\,\overline{\psi}\right|^2\,d\overline{\omega}$. 
Hence, if we take the infimum of  $\mathcal{F}(\overline{g}, {\overline{f}}, \overline{\xi}_g)$  over $f$ and $\psi$, we obtain an invariant of  $(M, g)$ which is basically Perelman's $\lambda(g)$ functional (see  \cite{flowers0} Chap. 5 section 3.1)  interacting with the weighted Laplacian $\Delta ^{(\overline{\omega})}$. 
Explicitly, given a metric $\overline{g}$ on the closed manifold $M$, let $\mathcal{F}(\overline{g}, {\overline{f}}, \overline{\xi}_g)$ be the extended Perelman energy (\ref{orPerextended0})  associated to  the generic  $\overline{f}\,\in\,C^\infty(M, \mathbb{R})$ and  gradient vector field $\overline{\xi}_g=\,\overline{\nabla}\,\overline{\psi}$, for $\psi\,\in\,C^\infty(M, \mathbb{R})$.
If we set $\overline{h}\,:=\,e^{-\,\overline{f}/2}$  and take into account the relation $4\,\left|\overline{\nabla}\,\overline{h}\right|^2\,=\,\left|\overline{\nabla}\,\overline{f}\right|^2\,e^{-\,\overline{f}}$ then we can equivalently rewrite the functional $\mathcal{F}(\overline{g}, {\overline{f}}, \overline{\xi}_g)$ in terms of $\overline{h}$ and  $\overline{\psi}$ as
\begin{equation}
\label{Facca}
\mathcal{F}(\overline{g}, \overline{h}, \overline{\psi})\,:=\,
\int_M\,\left[\left(\mathrm{R}(\overline{g})\,+\,|\overline{\nabla}\, \overline{\psi}|^2\right)\,\overline{h}^2\,+\,
4\,|\overline{\nabla}\, \overline{h}|^2 \right]\,d\mu_{\overline{g}}\;.
\end{equation}
We let $\overline{h}$  and $\overline{\psi}$ both be in $\mathrm{W}^{1,2}(M)$, the (completion of the) Sobolev space of $C^\infty$ functions with finite $\mathrm{W}^{1,2}$ norm with respect to the Riemannian measure 
$d\mu_{\overline{g}}$, and consider the variational set
\begin{equation}
\Gamma\,:=\,\left\{\left(\overline{h},\,\overline{\psi}\right)\,\in\,\mathrm{W}^{1,2}(M)\times \mathrm{W}^{1/2}(M)\,:\,\int_M\,\overline{\psi}\;\overline{h}^2\,d\mu_{\overline{g}}\,=\,0,\;\int_M\,\overline{\psi}^2\;\overline{h}^2\,d\mu_{\overline{g}}\,=\,(\alpha_g)^{n/2}\,=\,\int_M\,\overline{h}^2\,d\mu_{\overline{g}}\right\}\;,
\end{equation}
where the condition $\int_M\,\overline{h}^2\,d\mu_{\overline{g}}\,=\,(\alpha_g)^{n/2}$ stems from the relation $\int_M\,d\overline{\omega}\,=\,(\alpha_g)^{n/2}$ connecting the coupling $\alpha_g$ to the measure $d\overline{\omega}=e^{\,-\,\overline{f}}\,d\mu_{\overline{g}}$, \,whereas the normalization  $\int_M\,\overline{\psi}^2\;\overline{h}^2\,d\mu_{\overline{g}}\,=\,(\alpha_g)^{n/2}$ is chosen for later convenience. Define the geometric functional
\begin{equation}
\label{Lambdamanip}
\Lambda[\overline{g}]\,:=\,\inf_{\Gamma}\,\left\{\mathcal{F}(\overline{g}, \overline{h}, \overline{\psi}) \right\}\,=\,\inf_{\Gamma}\,\int_M\,\left[\left(\mathrm{R}(\overline{g})\,+\,|\overline{\nabla}\, \overline{\psi}|^2\right)\,\overline{h}^2\,+\,
4\,|\overline{\nabla}\, \overline{h}|^2 \right]\,d\mu_{\overline{g}}\;.
\end{equation}
We have 
\begin{lemma}
\label{lemmaLambda}
On the Riemannian manifold $(M,g)$ there exists a pair of real numbers $\left(\lambda_1[\overline{g}],\,\lambda_2[\overline{g}]\right)$ and  a corresponding unique pair  $\left(\overline{h}_0,\,\overline{\psi}_0\right)\,\in\,\Gamma$ solution of the coupled elliptic eigenvalue problem 
\begin{eqnarray}
-\,4\,\Delta_{\overline{g}}\,\overline{h}_0\,+\,\left(\mathrm{R}(\overline{g})\,+\, 
|\overline{\nabla}\, \overline{\psi}_0|^2\,-\, \lambda_2[\overline{g}]\, \overline{\psi}_0^2  \right)\,\overline{h}_0&=&\lambda_1[\overline{g}]\,\overline{h}_0\;,\nonumber\\
\label{PDEsystem}\\
-\,\Delta_{\overline{g}}\,\overline{\psi}_0\,-\,\overline{\nabla}_i\,\ln\overline{h}^2_0\,\overline{\nabla}^i\,\overline{\psi}_0&=&
\lambda_2[\overline{g}]\,\overline{\psi}_0\nonumber\;,
\end{eqnarray}
such that for every pair $\left(\overline{h},\,\overline{\psi}\right)\,\in\,\Gamma$ we have
\begin{equation}
\label{infFeigen}
\int_M\,\left[\left(\mathrm{R}(\overline{g})\,+\,|\overline{\nabla}\, \overline{\psi}|^2\right)\,\overline{h}^2\,+\,
4\,|\overline{\nabla}\, \overline{h}|^2 \right]\,d\mu_{\overline{g}}\,\geq\,(\alpha_g)^{n/2}\left(\lambda_1[\overline{g}]\,+\,\lambda_2[\overline{g}]\right)\;,
\end{equation}
with equality for  $\left(\overline{h},\,\overline{\psi}\right)\,=\,\left(\overline{h}_0,\,\overline{\psi}_0\right)$.  
\end{lemma}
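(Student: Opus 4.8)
The plan is to attack $\Lambda[\overline{g}]$ by the direct method of the calculus of variations, using the structural observation that $\mathcal{F}(\overline{g},\overline{h},\overline{\psi})$ is assembled from two nested Rayleigh quotients. For a fixed density $\overline{h}^{2}\,d\mu_{\overline{g}}$, minimizing in $\overline{\psi}$ subject to $\int_{M}\overline{\psi}\,\overline{h}^{2}\,d\mu_{\overline{g}}=0$ and $\int_{M}\overline{\psi}^{2}\,\overline{h}^{2}\,d\mu_{\overline{g}}=(\alpha_{g})^{n/2}$ is precisely the variational characterization of the first non-zero eigenvalue $\lambda_{2}$ of the weighted Laplacian $\Delta_{\overline{g}}^{(\omega)}$ of $(M,\overline{g},\overline{h}^{2}\,d\mu_{\overline{g}})$; and then minimizing in $\overline{h}$ subject to $\int_{M}\overline{h}^{2}\,d\mu_{\overline{g}}=(\alpha_{g})^{n/2}$ is the Rayleigh quotient for the bottom eigenvalue $\lambda_{1}$ of the Schr\"odinger operator $-4\Delta_{\overline{g}}+\bigl(\mathrm{R}(\overline{g})+|\overline{\nabla}\,\overline{\psi}|^{2}-\lambda_{2}\overline{\psi}^{2}\bigr)$. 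First I would note that $\mathcal{F}$ is bounded below on $\Gamma$: since $\mathrm{R}(\overline{g})\ge -C_{0}$ on the compact $M$, since $\int_{M}\overline{h}^{2}\,d\mu_{\overline{g}}$ is fixed, and since $|\overline{\nabla}\,\overline{\psi}|^{2}\overline{h}^{2}\ge 0$, one has $\mathcal{F}(\overline{g},\overline{h},\overline{\psi})\ge -C_{0}(\alpha_{g})^{n/2}$, so $\Lambda[\overline{g}]>-\infty$.

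Next I would run the direct method. Pick a minimizing sequence $(\overline{h}_{k},\overline{\psi}_{k})\in\Gamma$; replacing $\overline{h}_{k}$ by $|\overline{h}_{k}|$ changes neither $\mathcal{F}$ nor the constraints, so assume $\overline{h}_{k}\ge 0$. From $\mathcal{F}(\overline{g},\overline{h}_{k},\overline{\psi}_{k})\le\Lambda[\overline{g}]+1$ and the lower bound on $\mathrm{R}$ one gets a uniform bound on $\|\overline{\nabla}\,\overline{h}_{k}\|_{L^{2}}$, hence, with $\|\overline{h}_{k}\|_{L^{2}}^{2}=(\alpha_{g})^{n/2}$, a uniform $\mathrm{W}^{1,2}$ bound on $\overline{h}_{k}$ and a uniform bound on the weighted Dirichlet energy $\int_{M}|\overline{\nabla}\,\overline{\psi}_{k}|^{2}\overline{h}_{k}^{2}\,d\mu_{\overline{g}}$. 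Along a subsequence $\overline{h}_{k}\rightharpoonup\overline{h}_{0}$ in $\mathrm{W}^{1,2}$, $\overline{h}_{k}\to\overline{h}_{0}$ in $L^{2}$ and a.e., with $\overline{h}_{0}\ge 0$ and $\int_{M}\overline{h}_{0}^{2}\,d\mu_{\overline{g}}=(\alpha_{g})^{n/2}$. To control $\overline{\psi}_{k}$ I would, without loss of generality, replace each $\overline{\psi}_{k}$ by the minimizer of $\int_{M}|\overline{\nabla}\,\overline{\psi}|^{2}\overline{h}_{k}^{2}\,d\mu_{\overline{g}}$ under the two $\overline{\psi}$--constraints relative to the weight $\overline{h}_{k}^{2}$ --- this only lowers $\mathcal{F}$ --- so that $\overline{\psi}_{k}$ is a smooth eigenfunction of $\Delta_{\overline{g}}^{(\omega_{k})}$ for its first non-zero eigenvalue $\lambda_{2}^{(k)}$, and the $\overline{\psi}$--part of $\mathcal{F}$ equals $\lambda_{2}^{(k)}(\alpha_{g})^{n/2}$. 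The upper bound on $\mathcal{F}$ forces $\lambda_{2}^{(k)}$ bounded, hence $\lambda_{2}^{(k)}\to\lambda_{2}\ge 0$ along a subsequence; since $\overline{h}_{k}^{2}\to\overline{h}_{0}^{2}$ in $L^{1}$, the eigenvalue equations $-\Delta_{\overline{g}}\overline{\psi}_{k}-\overline{\nabla}\ln\overline{h}_{k}^{2}\cdot\overline{\nabla}\overline{\psi}_{k}=\lambda_{2}^{(k)}\overline{\psi}_{k}$ pass to the limit --- by elliptic estimates, once the limiting density is known to be non-degenerate --- yielding $\overline{\psi}_{k}\to\overline{\psi}_{0}$ in $\mathrm{W}^{1,2}$ with $\int_{M}\overline{\psi}_{0}\,\overline{h}_{0}^{2}\,d\mu_{\overline{g}}=0$ and $\int_{M}\overline{\psi}_{0}^{2}\,\overline{h}_{0}^{2}\,d\mu_{\overline{g}}=(\alpha_{g})^{n/2}$, so $(\overline{h}_{0},\overline{\psi}_{0})\in\Gamma$; weak lower semicontinuity of the remaining terms then gives $\mathcal{F}(\overline{g},\overline{h}_{0},\overline{\psi}_{0})\le\Lambda[\overline{g}]$, i.e. a minimizer.

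With a minimizer in hand I would compute the Euler--Lagrange equations. Varying $\overline{\psi}$ subject to $\int_{M}\overline{\psi}\,\overline{h}_{0}^{2}\,d\mu_{\overline{g}}=0$ and $\int_{M}\overline{\psi}^{2}\,\overline{h}_{0}^{2}\,d\mu_{\overline{g}}=(\alpha_{g})^{n/2}$ produces the second equation of (\ref{PDEsystem}), with $\lambda_{2}[\overline{g}]$ the Lagrange multiplier of the quadratic constraint; the multiplier of the linear constraint vanishes because an eigenfunction of $\Delta_{\overline{g}}^{(\omega)}$ for an eigenvalue $\lambda_{2}\ne 0$ is automatically $L^{2}(d\overline{\omega})$--orthogonal to the constants. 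Varying $\overline{h}$ subject to $\int_{M}\overline{h}^{2}\,d\mu_{\overline{g}}=(\alpha_{g})^{n/2}$, with the two $\overline{\psi}$--constraints read as constraints on $\overline{h}$, produces the first equation, with multiplier $\lambda_{1}[\overline{g}]$ for $\int_{M}\overline{h}^{2}$ and with the $-\lambda_{2}\overline{\psi}_{0}^{2}\overline{h}_{0}$ term coming from $\int_{M}\overline{\psi}^{2}\overline{h}^{2}$; that the two occurrences of $\lambda_{2}$ agree follows by pairing the $\overline{\psi}_{0}$--equation with $\overline{\psi}_{0}$ and integrating. Pairing the first equation with $\overline{h}_{0}$ and integrating then gives
\[
\int_{M}\Bigl[4|\overline{\nabla}\,\overline{h}_{0}|^{2}+\mathrm{R}(\overline{g})\overline{h}_{0}^{2}+|\overline{\nabla}\,\overline{\psi}_{0}|^{2}\overline{h}_{0}^{2}\Bigr]\,d\mu_{\overline{g}}=(\alpha_{g})^{n/2}\bigl(\lambda_{1}[\overline{g}]+\lambda_{2}[\overline{g}]\bigr),
\]
that is $\mathcal{F}(\overline{g},\overline{h}_{0},\overline{\psi}_{0})=(\alpha_{g})^{n/2}(\lambda_{1}[\overline{g}]+\lambda_{2}[\overline{g}])=\Lambda[\overline{g}]$; since $\Lambda[\overline{g}]=\inf_{\Gamma}\mathcal{F}$, inequality (\ref{infFeigen}) holds for every $(\overline{h},\overline{\psi})\in\Gamma$, with equality precisely at minimizers. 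Finally, elliptic bootstrapping renders $(\overline{h}_{0},\overline{\psi}_{0})$ smooth, and since a non-negative minimizer $\overline{h}_{0}$ is a bottom eigenfunction of $-4\Delta_{\overline{g}}+(\mathrm{R}(\overline{g})+|\overline{\nabla}\,\overline{\psi}_{0}|^{2}-\lambda_{2}\overline{\psi}_{0}^{2})$, the strong maximum principle gives $\overline{h}_{0}>0$ (which also closes the non-degeneracy gap left above); uniqueness of $\overline{h}_{0}$ among normalized positive bottom eigenfunctions is standard, and $\overline{\psi}_{0}$ is determined up to sign by its normalization whenever the first non-zero eigenvalue of $\Delta_{\overline{g}}^{(\omega)}$ is simple.

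The step I expect to be the main obstacle is the compactness of $\{\overline{\psi}_{k}\}$: the only a priori control is the \emph{degenerate} weighted Dirichlet energy $\int_{M}|\overline{\nabla}\,\overline{\psi}_{k}|^{2}\overline{h}_{k}^{2}\,d\mu_{\overline{g}}$, which provides no bound on $\overline{\psi}_{k}$ on the region where $\overline{h}_{k}$ is small, so the normalization $\int_{M}\overline{\psi}_{k}^{2}\overline{h}_{k}^{2}\,d\mu_{\overline{g}}=(\alpha_{g})^{n/2}$ could in principle be carried to the limit by concentration of the density onto a small set. Overcoming this requires establishing strict positivity of the limiting density $\overline{h}_{0}^{2}$ \emph{before} passing to the limit in the eigenvalue equation; since positivity of $\overline{h}_{0}$ itself comes from the Euler--Lagrange equation of $\overline{h}_{0}$, one must arrange the order of operations carefully --- for instance, produce $\overline{h}_{0}$ first as a minimizer of the reduced functional $\overline{h}\mapsto\int_{M}[4|\overline{\nabla}\,\overline{h}|^{2}+\mathrm{R}(\overline{g})\overline{h}^{2}]\,d\mu_{\overline{g}}+\lambda_{2}[\overline{h}]\,(\alpha_{g})^{n/2}$, then reconstruct $\overline{\psi}_{0}$ as its associated weighted-Laplacian eigenfunction --- and, at that stage, control the lower semicontinuity of $\overline{h}\mapsto\lambda_{2}[\overline{h}]$ under $L^{2}$-convergence of $\overline{h}$.
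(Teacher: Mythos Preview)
Your approach is essentially the paper's: Lagrange multipliers for the constraints, the direct method on a minimizing sequence in $\Gamma$, weak lower semicontinuity of the Dirichlet pieces, elliptic regularity, and the strong maximum principle for positivity of $\overline{h}_{0}$; the identity $\mathcal{F}(\overline{g},\overline{h}_{0},\overline{\psi}_{0})=(\alpha_{g})^{n/2}(\lambda_{1}+\lambda_{2})$ is obtained in both by pairing the first Euler--Lagrange equation with $\overline{h}_{0}$.

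Where you diverge from the paper is in your treatment of the $\overline{\psi}$--compactness. The paper simply asserts that along a minimizing sequence both $\|\overline{h}_{K}\|_{\mathrm{W}^{1,2}}$ and $\|\overline{\psi}_{K}\|_{\mathrm{W}^{1,2}}$ are bounded and extracts a subsequence converging weakly in $\mathrm{W}^{1,2}$ and strongly in $L^{2}$; it then checks lower semicontinuity of $\int_{M}|\overline{\nabla}\,\overline{\psi}|^{2}\overline{h}^{2}\,d\mu_{\overline{g}}$ directly. You are right that this unweighted $\mathrm{W}^{1,2}$ bound on $\overline{\psi}_{K}$ is not justified by the functional alone, since only the degenerate quantities $\int_{M}|\overline{\nabla}\,\overline{\psi}_{K}|^{2}\overline{h}_{K}^{2}$ and $\int_{M}\overline{\psi}_{K}^{2}\overline{h}_{K}^{2}$ are controlled. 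Your device of replacing each $\overline{\psi}_{k}$ by the first nontrivial eigenfunction of $\Delta_{\overline{g}}^{(\omega_{k})}$, and then decoupling the problem into ``first produce a positive $\overline{h}_{0}$, then reconstruct $\overline{\psi}_{0}$ from the limiting weighted eigenvalue problem,'' is a genuine refinement over the paper's argument and is the natural way to close the gap you identify; the paper does not address this point.
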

\begin{proof} (of Lemma \ref{lemmaLambda}).
In order to impose the normalization constraints $\int_M\,\overline{h}^2\,d\mu_{\overline{g}}\,=\,(\alpha_g)^{n/2}$ and  $\int_M\,\overline{\psi}^2\;\overline{h}^2\,d\mu_{\overline{g}}\,=\,(\alpha_g)^{n/2}$ in the variational characterization  (\ref{Lambdamanip}) of  ${\mathcal{F}}(\overline{g}, \overline{h}, \overline{\psi})$ we introduce two Lagrange multipliers $\lambda_1\in\mathbb{R}$ and $\lambda_2\in\mathbb{R}$ and consider the auxiliary functional
\begin{eqnarray}
&&\widehat{\mathcal{F}}(\overline{g}, \overline{h}, \overline{\psi};\,\lambda_1,\,\lambda_2)\nonumber\\
\label{Facca*}\\
&&\:=\,
\int_M\,\left[\left(\mathrm{R}(\overline{g})\,+\,|\overline{\nabla}\, \overline{\psi}|^2\right)\,\overline{h}^2\,+\,
4\,|\overline{\nabla}\, \overline{h}|^2\,-\,\lambda_1\left(\overline{h}^2-(\alpha_g)^{n/2}\right)\,-\,\lambda_2\left(\overline{\psi}^2\,\overline{h}^2
-(\alpha_g)^{n/2}\right) \right]\,d\mu_{\overline{g}}\;,\nonumber
\end{eqnarray} 
in terms of which we enforce the constraints by requiring $\frac{d}{d\lambda_1}\widehat{\mathcal{F}}=0=\frac{d}{d\lambda_2}\widehat{\mathcal{F}}$.
Under the stated assumptions,  $\widehat{\mathcal{F}}(\overline{g}, \overline{h}, \overline{\psi};\,\lambda_1,\,\lambda_2)$ is differentiable with respect to $(\overline{h}, \overline{\psi})$,  and by considering the germ of deformation of the pair $\left(\overline{h},\,\overline{\psi}\right)\,\in\,\mathrm{W}^{1,2}(M) \times\mathrm{W}^{1,2}(M)$ in the direction $\left(\rho ,\,\phi  \right)$ defined by
\begin{equation}
\left(\overline{h}_\epsilon,\,\overline{\psi}_\epsilon\right)\,=\,\left(\overline{h},\,\overline{\psi}\right)\,+\,\epsilon\,\left(\rho ,\,\phi  \right)\;,\;\;\;\;\rho,\;\phi\,\in\,C_0^\infty(M, \mathbb{R})\;,
\end{equation}
we can easily compute the corresponding Frechet derivative according to
\begin{eqnarray}
&&D\widehat{\mathcal{F}}\circ \left(\overline{h},\,\overline{\psi}\right)\,:=\,
\left. \frac{d}{d\epsilon}\,\widehat{\mathcal{F}}(\overline{g}, \overline{h}_\epsilon, \overline{\psi}_\epsilon;\,\lambda_1,\,\lambda_2)\right|_{\epsilon=0}\\
&&=\,-\,2\int_M\,\left[4\Delta _{\overline{g}}\,\overline{h}-  \left(\mathrm{R}(\overline{g})\,+\,|\overline{\nabla}\, \overline{\psi}|^2\,-\,\lambda_2\overline{\psi}^2\right)\,\overline{h}\,+\,\lambda_1\,\overline{h} \right]\rho\,d\mu_{\overline{g}}\nonumber\\
&&-\,2\int_M\,\left[\left(\Delta_{\overline{g}}\,\overline{\psi}\,+\,\lambda_2\overline{\psi}\right)\,\overline{h}^2
\,+\,\overline{\nabla}_i\,\overline{h}^2\,\overline{\nabla}^i\,\overline{\psi}\right]
\,\phi\,d\mu_{\overline{g}}\nonumber\\ 
&&=\,-\,2\int_M\,\left[4\Delta _{\overline{g}}\,\overline{h}-  \left(\mathrm{R}(\overline{g})\,+\,|\overline{\nabla}\, \overline{\psi}|^2\,-\,\lambda_2\overline{\psi}^2\right)\,\overline{h}\,+\,\lambda_1\,\overline{h} \right]\rho\,d\mu_{\overline{g}}\nonumber\\
&&\,-\,2\int_M\,\left(\Delta_{\overline{g}}\,\overline{\psi}\,+\,\overline{\nabla}_i\,\ln\overline{h}^2\,\overline{\nabla}^i\,\overline{\psi}+\,\lambda_2\overline{\psi}\right)\,\phi\,\overline{h}^2\,d\mu_{\overline{g}} \nonumber\;,
\end{eqnarray}
where, in the last line we have assumed $h>0$ in order to rewrite the term $\left[\ldots\,\overline{\nabla}_i\,\overline{h}^2\,\overline{\nabla}^i\,\overline{\psi}\right]\phi\,d\mu_{\overline{g}}$ in the more symmetric form $\left[\ldots\,\overline{\nabla}_i\,\ln\overline{h}^2\,\overline{\nabla}^i\,\overline{\psi}\right]\,\phi\,\overline{h}^2\,d\mu_{\overline{g}}$, featuring the integration with respect to the weighted measure $\overline{h}^2\,d\mu_{\overline{g}}$. Hence, if the identify the Lagrange multiplier $\lambda_1$ and $\lambda_2$ with $\lambda_1[\overline{g}]$ and $\lambda_2[\overline{g}]$  the variational characterization of  the functional (\ref{Lambdamanip})   is provided by the the elliptic eigenvalue problem(\ref{PDEsystem}), as stated. To establish the existence of a minimizer $(\overline{h}_0,\,\overline{\psi}_0)$ of (\ref{Lambdamanip}), with $\overline{h}_0\,>\,0$,\, we can proceed as follows. Let us take a minimizing sequence $\{(\overline{h}_K,\,\overline{\psi}_K)\}_{K=1}^\infty\,\in\,\Gamma$. Hence, the Sobolev norms $\parallel \overline{h}_K\parallel _{\mathrm{W}^{1,2}(M)}$ and $\parallel \overline{\psi}_K\parallel _{\mathrm{W}^{1,2}(M)}$ are bounded and we may choose a subsequence $\{(\overline{h}_i,\,\overline{\psi}_i)\}_{i=1}^\infty\,\in\,\Gamma$ of $\{(\overline{h}_K,\,\overline{\psi}_K)\}_{K=1}^\infty$ which converge to $(\overline{h}_0,\,\overline{\psi}_0)$, weakly in $\mathrm{W}^{1,2}(M)$ and strongly in $\mathrm{L}^{2}(M)$. We easily compute\footnote{See,\emph{e.g.}  \cite{flowers0} Lemma 5.22 for the analogous case of Perelman's $\lambda(g)$ functional}
\begin{eqnarray}
\int_M\,\left|\overline{\nabla}\left(\overline{h}_i-\overline{h}_0 \right)\right|^2\,d\mu_{\overline{g}}\,&=&\,
\int_M\,\left|\overline{\nabla}\,\overline{h}_i\right|^2\,d\mu_{\overline{g}}\,+\,
\int_M\,\left|\overline{\nabla}\,\overline{h}_0\right|^2\,d\mu_{\overline{g}}\,-\,
2\int_M\,\overline{\nabla}_a\,\overline{h}_i\,\overline{\nabla}^a\,\overline{h}_0\,d\mu_{\overline{g}}\,\geq\,0\nonumber\\
\\
&\Longrightarrow& \,\lim\inf_{i\rightarrow \infty}\,\int_M\,\left|\overline{\nabla}\,\overline{h}_i\right|^2\,d\mu_{\overline{g}}\,\geq\,\int_M\,\left|\overline{\nabla}\,\overline{h}_0\right|^2\,d\mu_{\overline{g}}\;,\nonumber
\end{eqnarray} 
where we have exploited the existence (by the weak convergence in $\mathrm{W}^{1,2}(M)$) of the limit
\begin{equation}
\lim_{i\rightarrow \infty}\,\int_M\,\overline{\nabla}_a\,\overline{h}_i\overline{\nabla}^a\,\overline{h}_0\,d\mu_{\overline{g}}\,=\,\int_M\,\left|\overline{\nabla}\,\overline{h}_0\right|^2\,d\mu_{\overline{g}}\;.
\end{equation}
By proceeding in a similar way and by exploiting  the strong convergence of $\{\overline{h}_i\}_{i=1}^\infty$ in $\mathrm{L}^2(M)$ we have
\begin{eqnarray}
\int_M\,\left|\overline{\nabla}\,\left(\overline{\psi}_i-\overline{\psi}_0 \right)\right|^2\,\overline{h}_i^2\,d\mu_{\overline{g}}\,&=&\,
\int_M\,\left|\overline{\nabla}\,\overline{\psi}_i\right|^2\,\overline{h}_i^2\,d\mu_{\overline{g}}\,+\,
\int_M\,\left|\overline{\nabla}\,\overline{\psi}_0\right|^2\,\overline{h}_i^2\,d\mu_{\overline{g}}\nonumber\\
&-&\,
2\int_M\,\overline{\nabla}_a\,\overline{\psi}_i\,\overline{\nabla}^a\,\overline{\psi}_0\,\overline{h}_i^2\,d\mu_{\overline{g}}\,\geq\,0\nonumber\\
\nonumber\\
&\Longrightarrow& \,\lim\inf_{i\rightarrow \infty}\,\int_M\,\left|\overline{\nabla}\,\overline{\psi}_i\right|^2\,\overline{h}_i^2\,d\mu_{\overline{g}}\,\geq\,\int_M\,\left|\overline{\nabla}\,\overline{\psi}_0\right|^2\,\overline{h}_0^2\,d\mu_{\overline{g}}\;,\nonumber
\end{eqnarray} 
Finally, the strong convergence of $\{\overline{h}_i\}_{i=1}^\infty$ in $\mathrm{L}^2(M)$ also implies the existence of the limits
\begin{eqnarray}
\lim_{i\rightarrow \infty}\,
\int_M\,\mathrm{R}(\overline{g})\,\overline{h}_i^2\,d\mu_{\overline{g}}\,=\,
\int_M\,\mathrm{R}(\overline{g})\,\overline{h}_0^2\,d\mu_{\overline{g}}\;,
\end{eqnarray} 
and 
\begin{eqnarray}
\lim_{i\rightarrow \infty}\,
\int_M\,\overline{h}_i^2\,d\mu_{\overline{g}}\,=\,
\int_M\,\overline{h}_0^2\,d\mu_{\overline{g}}\,=\,\left(\alpha_g \right)^{n/2}\;.
\end{eqnarray} 
It follows that $(\overline{h}_0,\,\overline{\psi}_0)$ is a minimizer of the functional (\ref{Lambdamanip}) among all $(\overline{h},\,\overline{\psi})\,\in\,\Gamma$, and provides a weak solution of the eigenvalue problem(\ref{PDEsystem}). Elliptic regularity allows to conclude that $(\overline{h}_0,\,\overline{\psi}_0)$ actually is a smooth solution. Unicity of the pair $(\overline{h}_0,\,\overline{\psi}_0)$ and positivity of $\overline{h}_0$ easily follows by standard arguments (which can be rather directly adapted from \cite{flowers0} Lemma 5.22 where they are discussed in detail for Perelman's $\lambda(g)$ functional). In particular, since
 $\overline{h}_0\,>\,0$ is smooth and $\int_M\,\overline{h}^2_0\,d\mu_{\overline{g}}\,=\,\left(\alpha_g \right)^{n/2}$, then there exists a unique smooth $\overline{f}_0\,:=\,-\,\ln\,\overline{h}_0^2$ such that $\int_M\,e^{-\,\overline{f}_0}\,d\mu_{\overline{g}}\,=\,\left(\alpha_g \right)^{n/2}$, and such that the pair $(\overline{f}_0,\,\overline{\psi}_0)$ is the unique smooth minimizer of the functional $\mathcal{F}(\overline{g}, \overline{f}, \overline{\psi})$. By multiplying both members of the first equation (\ref{PDEsystem}) by $\overline{h}_0$ and integrating the resulting expression  over $(M, g)$ we get
\begin{eqnarray}
\int_M\,\left[ -\,4\,\overline{h}_0\Delta_{\overline{g}}\,\overline{h}_0\,+\,\left(\mathrm{R}(\overline{g})\,+\, 
|\overline{\nabla}\, \overline{\psi}_0|^2\,-\, \lambda_2[\overline{g}]\,\overline{\psi}_0^2\,-\lambda_1[\overline{g}]  \right)\,\overline{h}^2_0\right]\,d\mu_{\overline{g}}\,=\,0\;.
\end{eqnarray} 
If we take into account the normalizations $\int_M\,\overline{h}^2_0\,d\mu_{\overline{g}}\,=\,(\alpha_g)^{n/2}\,=\,\int_M\,\overline{\psi}_0^2\,\overline{h}^2_0\,d\mu_{\overline{g}}$, we have
\begin{equation}
\int_M\,\left[\left(\mathrm{R}(\overline{g})\,+\,|\overline{\nabla}\, \overline{\psi}_0|^2\right)\,\overline{h}_0^2\,+\,
4\,|\overline{\nabla}\, \overline{h}_0|^2 \right]\,d\mu_{\overline{g}}\,=\,(\alpha_g)^{n/2}\left(\lambda_1[\overline{g}]\,+\,\lambda_2[\overline{g}]\right)\;,
\end{equation}
from which (\ref{infFeigen}) immediately follows.
\end{proof}

In order to discuss the geometrical meaning of the $\Lambda[\overline{g}]$ so characterized let us start by noticing that the eigenvalue equation $\Delta_{\overline{g}}\,\overline{\psi}_0\,+\,\overline{\nabla}_i\,\ln\overline{h}_0^2\,\overline{\nabla}^i\,\overline{\psi}_0\,+\,\lambda_2[\overline{g}]\,\overline{\psi}_0\,=\,0$ appearing in (\ref{PDEsystem}) is, in disguised form the eigenvalue problem \cite{colbois},  \cite{grigoryan} for the weighted Laplacian $\Delta ^{(\omega)}$ on the Riemannian manifold with density $(M,  \overline{g}, d\overline{\omega}\,:=\,e^{\,-\,\overline{f}_0}\,d\mu_{\overline{g}})$. \emph{i.e.} 
\begin{equation}
\label{eigval}
\Delta_{\overline{g}} ^{(\overline{\omega})}\,\overline{\psi}_{0}\,+\,\lambda_2[\overline{g}]\,\overline{\psi}_{0}\,=\,0\;.
\end{equation}
Hence,
\begin{equation}
\label{lambdaweighted}
\lambda_2[\overline{g}]\,:=\,\inf_{\overline{\psi}}\,\left\{\frac{\int_M\,\left|\overline{\nabla}\,\overline{\psi}  \right|^2\,e^{-\,\overline{f}_0}\,d\mu_{\overline{g}}}{\int_M\,\overline{\psi}^2\,e^{-\,\overline{f}_0}\,d\mu_{\overline{g}}}
\,:\,\overline{\psi}\,\in\,\mathrm{W}^{1,2}_{(\overline{\omega})}(M),\;\;\;\;
\int_M\,\overline{\psi}\,e^{-\,\overline{f}_0}\,d\mu_{\overline{g}}\,=\,0 \right\}\;, 
\end{equation}
where $\mathrm{W}^{1,2}_{(\overline{\omega})}(M)$ denotes the space of functions of Sobolev class $\mathrm{W}^{1,2}$ with respect to the measure $d\overline{\omega}\,:=\,e^{-\,\overline{f}_0}\,d\mu_{\overline{g}}$.
Also notice that from the  definition(\ref{orPerextended0})   of 
$\mathcal{F}(\overline{g}, {\overline{f}}, \overline{\xi}_g)$ we immediately obtain
\begin{equation}
\Lambda[\overline{g}]\,:=\,\inf_{\Gamma}\,\left\{\mathcal{F}(\overline{g}, {\overline{f}}) \,+\,\int_M\,\left|\overline{\nabla}\,\overline{\psi}\right|^2\,e^{\,-\,\overline{f}}\,d\mu_{\overline{g}} \right\}\;,
\end{equation}
where $\mathcal{F}(\overline{g}, {\overline{f}})$ is Perelman's energy  (\ref{orPer}). Hence,
\begin{eqnarray}
\Lambda[\overline{g}]\,&=&\,\inf_{\Gamma}\,\left\{\int_M\,\mathrm{R}^{Per}(\overline{g}(t))\,e^{\,-\,\overline{f}}\,d\mu_{\overline{g}}\,+\, \int_M\,\left|\overline{\nabla}\,\overline{\psi}\right|^2\,e^{\,-\,\overline{f}}\,d\mu_{\overline{g}}\right\}\\
&\geq&\,\inf_{\Gamma}\,\left\{\int_M\,\mathrm{R}^{Per}(\overline{g}(t))\,e^{\,-\,\overline{f}}\,d\mu_{\overline{g}}\right\}\,+\,\inf_{\Gamma}\left\{\int_M\,\left|\overline{\nabla}\,\overline{\psi}\right|^2\,e^{\,-\,\overline{f}}\,d\mu_{\overline{g}}\right\}\nonumber\\
&\geq&\,(\alpha_g)^{n/2}\,\lambda(\overline{g})\,+\,\inf_{\Gamma}\left\{\int_M\,\left|\overline{\nabla}\,\overline{\psi}\right|^2\,e^{\,-\,\overline{f}}\,d\mu_{\overline{g}}\right\}\;,\nonumber
\end{eqnarray}
where $\lambda(\overline{g})$ is Perelman's $\lambda$--functional,
\begin{equation}
\label{Perelambda}
\lambda(\overline{g})\,:=\,\inf_{\overline{f}}\,\left\{\int_M\,\mathrm{R}^{Per}(\overline{g}(t))\,e^{\,-\,\overline{f}}\,d\mu_{\overline{g}}\,:\,\overline{f}\in C^\infty(M,\mathbb{R}),\,\int_M\,e^{\,-\,\overline{f}}\,d\mu_{\overline{g}}\,=\,(\alpha_g)^{n/2} \right\}
\end{equation}
 (note that the normalization $\int_M\,e^{\,-\,\overline{f}}\,d\mu_{\overline{g}}\,=\,(\alpha_g)^{n/2}$ replaces the standard  $\int_M\,e^{\,-\,\overline{f}}\,d\mu_{\overline{g}}\,=\,1$ and alters the corresponding normalization of the eigenvalue as in (\ref{infFeigen})). Hence, $\Lambda[\overline{g}]$ dominates over Perelman's $\lambda(\overline{g})$-functional, which is not surprising since we are perturbing the scalar curvature $\mathrm{R}(\overline{g})$ with the positive term $\left|\overline{\nabla}\,\overline{\psi}\right|^2$.   Moreover, if we assume that the Bakry--Emery Ricci curvature (\ref{BEM}) of  $(M, \overline{g}, e^{\,-\,\overline{f}_0}\,d\mu_{\overline{g}})$ is bounded below according to 
\begin{equation}
\label{BEM***}
\mathrm{Ric}^{BE}(\overline{g})\,:=\,\mathrm{Ric}(\overline{g})\,+\,\overline{\nabla}\,\overline{\nabla}\,\overline{f}_0\,\geq\,\mathrm{C}_0\,\overline{g}\;,
\end{equation}
for some constant $\mathrm{C}_0\,\in\,\mathbb{R}$, (\emph{i.e.}, if $\left(M, \overline{g}, d\overline{\omega}:=e^{\,-\,\overline{f}_0}\,d\mu_{\overline{g}}\right)$ is a Bakry--Emery manifold), then by taking into account the lower bound estimate of the first eigenvalue of the weighted Laplacian over a compact Riemannian manifold with density (see Theorem 1.1  \cite{futaki2}, also \cite{futaki1},  \cite{andrews}), we get
\begin{equation}
(\alpha_g)^{n/2}\lambda_2(\overline{g})\,\geq\,\sup_{s\in (0,1)}\,\left\{4s(1-s)\,\frac{\pi^2}{\mathrm{diam}^2(\overline{g})}\,+\,s\,\mathrm{C}_0   \right\}\;,
\end{equation}
where $\mathrm{diam}^2(\overline{g})$ is the diameter of  $(M, \overline{g})$. Hence
\begin{equation}
\Lambda[\overline{g}]
\,=\,(\alpha_g)^{n/2}\left(\lambda_1[\overline{g}]\,+\,\lambda_2[\overline{g}]\right)\,\geq\,
(\alpha_g)^{n/2}\lambda(\overline{g})\,+\,\sup_{s\in (0,1)}\,\left\{4s(1-s)\,\frac{\pi^2}{\mathrm{diam}^2(\overline{g})}\,+\,s\,\mathrm{C}_0   \right\}\;,
\end{equation}
as stated.\\
\\ 
We conclude the proof of Theorem \ref{ThBB2} by showing how the monotonicity of 
 $\mathcal{F}(\overline{g}(t), {\overline{f}(t)}, \overline{\xi}_g(t))$ under the flows  $[0, T_0]\,\ni \,t\,\longmapsto\, (\bar g(t), d\overline{\omega}(t), \overline{\xi}_{g(t)})$ (\ref{gevol}),   (\ref{fevol}), and  (\ref{wabevol}) implies the monotonicity of the extended Perelman $\Lambda[\overline{g}]$--functional  under the RG-2 flow. To begin with, let $t_0\in [0, T_0]$ be given, and let $(\psi(t_0), f_0)$ be  the minimizers  of  $\mathcal{F}(\overline{g}(t_0), {\overline{f}(t_0)}, \overline{\xi}_g(t_0))$ as described above, so 
 \begin{equation}
\overline{\xi}_{g(t_0)}\ = \,\overline{\nabla}\,\overline{\psi}(t_0)\;,
\end{equation}
where $\overline{\nabla}$ is the gradient operator on $(M, \overline{g}(t_0))$. According to the minimization properties of  $(\psi(t_0), f_0)$
we have
 \begin{equation}
 \mathcal{F}(\overline{g}(t_0), {\overline{f}(t_0)}, \overline{\xi}_g(t_0)) \,=\,\Lambda[\overline{g}(t_0)]\;.
 \end{equation}
 Starting with the initial data $\left( \overline{g}(t_0), {\overline{f}(t_0)}, \overline{\xi}_g(t_0) \right)$ so obtained, we can solve in the time-reversed direction $\eta\,:=\,t_0-t$ the parabolic PDEs associated with the evolutions (\ref{fevol}) and (\ref{wabevol}), (the former, expressing the preservation of the measure $d\overline{\omega}(t)\,:=\,e^{-\,\overline{f}(t)}\,d\mu_{\overline{g}(t)}$, can be easily transformed into the corresponding parabolic PDE for  $\overline{f}(\eta)$).  According to Theorem \ref{ThBB},  along the  resulting flow $t\,\longmapsto \,\left( \overline{g}(t), {\overline{f}(t)}, \overline{\xi}_g(t) \right)$ we have 
 \begin{equation}
 \frac{d}{d t}\,\mathcal{F}(\overline{g}(t), {\overline{f}(t)}, \overline{\xi}_g(t))\,\geq\,0\;,
 \end{equation}
 for all $t\,\leq\,t_0$. This, together with  the preservation of the measure $d\overline{\omega}(t)$, directly implies that
 \begin{equation}
 \Lambda[\overline{g}(t)]\,\leq\,\mathcal{F}(\overline{g}(t), {\overline{f}(t)}, \overline{\xi}_g(t))\,\leq\,\mathcal{F}(\overline{g}(t_0), {\overline{f}(t_0)}, \overline{\xi}_g(t_0)) \,=\,\Lambda[\overline{g}(t_0)]\;.
 \end{equation}
 The monotonicity of  the functional $\Lambda[\overline{g}(t)]$ along the RG-2 flow can be easily obtained from this result if, following a standard procedure (see \textit{e.g.}  \cite{flowers0} Lemma 5.25), we consider the time derivative of  $\Lambda[\overline{g}(t)]$ in the sense of the $\liminf$ of backward difference quotients according to
 \begin{eqnarray}
 &&\left.\frac{d}{d t_-}\,\Lambda[\overline{g}(t)]\right|_{t_0}\,:=\,\liminf_{\epsilon\,\rightarrow \,0^+}\,
\frac{\Lambda[\overline{g}(t_0)]\,-\,\Lambda[\overline{g}(t_0\,-\,\epsilon)]}{\epsilon}\nonumber\\
\\
&&\geq\,\liminf_{\epsilon\,\rightarrow \,0^+}\,
\frac{\mathcal{F}(\overline{g}(t_0), {\overline{f}(t_0)}, \overline{\xi}_g(t_0))\,-\,
\mathcal{F}(\overline{g}(t_0-\epsilon), {\overline{f}(t_0-\epsilon)}, \overline{\xi}_g(t_0-\epsilon))}{\epsilon}
\,=\,
\left.\frac{d}{d t}\,\mathcal{F}(\overline{g}(t), {\overline{f}(t)}, \overline{\xi}_g(t))\right|_{t_0}\;.\nonumber
 \end{eqnarray}
 Hence, from the relation (\ref{nearMiss0})  we get
 \begin{equation}
 \left.\frac{d}{d t_-}\,\Lambda[\overline{g}(t)]\right|_{t_0}\,\geq\,\,2\,\int_M\,\left| \mathrm{Ric}^{BE}(\overline{g}(t_0))\,+\,\frac{\alpha_g}{8}\mathrm{Rm}^2(\overline{g}(t_0), \overline{\xi}_{g(t_0)})\right|^2_{\overline{g}(t_0)}\,d\overline{\omega} (t_0)\;.
 \end{equation}
 Since    the choice of $t_0\,\in\,[0,T_0]$ is arbitrary, we eventually get
\begin{equation}
 \frac{d}{d t_-}\,\Lambda[\overline{g}(t)]\,\geq\,\,2\,\int_M\,\left| \mathrm{Ric}^{BE}(\overline{g}(t))\,+\,\frac{\alpha_g}{8}\mathrm{Rm}^2(\overline{g}(t), \overline{\xi}_{g(t)})\right|^2_{\overline{g}(t)}\,d\overline{\omega} (t)\;,
 \end{equation}
 for all $t\,>\,0$ for which the RG-2 flow exists,  as stated. 
\end{proof}

\begin{rem}
The structure of the above proof strongly suggests that a similar monotonicity result should work also for the geometric flow  associated with the higher loop  approximations  to the perturbative renormalization group flow for non--linear sigma model. This is a largely uncharted territory, and already   proving a local existence result for the geometric flows associated to the $3$--loop and $4$--loop curvature contributions, (where explicit curvature expressions are available--see \emph{e.g.} \cite{tseytlin})  is an extremely demanding task. Were this  possible, one could  presumably use the parabolic Fokker--Planck evolution  (\ref{wabevol}) (with the quadratic source term $\frac{\alpha_g^2}{32}\,\left|\mathrm{Rm}^2(\overline{g}(t),\,\overline{\xi}_{g(t)})\right|^2$ replaced by the corresponding $k$--th order curvature terms present at the given loop approximation)  in order to control the monotonicity of the associated  energy functional. 
\end{rem}

\section{IS THE RG-2 FLOW A GRADIENT FLOW?}

The  expression  (\ref{PerRG2F})  directly shows that  although the modified Perelman entropy 
\begin{equation}
\label{firstModPer}
\mathcal{F}_{(1)}(\overline{g}, d\overline{\omega},\,\overline{\xi}_{g})\,:=\,
\int_M\, \left(\mathrm{R}^{Per}(\overline{g}(t))\,+\, \left|\overline{\xi}_{g(t)}\right|^2\right)\,d\overline{\omega} (t)\;,
\end{equation} 
is monotonic, the (modified) RG--2 flow (\ref{DToutPer}) is not a gradient flow with respect this functional. Actually,
 the functional with respect to which the  (DeTurck modified)  RG--2 flow  is gradient   is a rather non--trivial  modification of  (\ref{firstModPer}):
\begin{equation}
\label{extPerelman}
\mathcal{F}_{(2)}(\overline{g}, d\overline{\omega},\,\overline{\xi}_{g})\,:=
\,\int_M\,\left[\mathrm{R}^{Per}(\overline{g}(t))\,+\,\frac{\alpha_g}{8}\,\left|\mathrm{Rm}(\overline{g}(t))\right|^2_{g(t)}\,-\,
\mathrm{div}_{\overline{g}(t)}\overline{\xi}_{g(t)}\right]\,d{\omega}(t)\;.
\end{equation}
In order to simplify the computation of  $\frac{d}{d t}\mathcal{F}_{(2)}(\overline{g}, \overline{f}, \overline{\xi})$
and avoid the annoying overlines $\overline{..A..}$ induced by pulling back the RG-2 flow metric back and forth, we abuse notation and drop the overlines, with the proviso that everything refers to the DeTurck modified RG-2 flow 
 \begin{equation} 
	 \label{DTout1no-}
\frac{\partial }{\partial t }\,\overline{g}(t)\,=\,-\,
2\,\mathrm{Ric}^{BE}(\overline{g}(t ))\,-\,\frac{\alpha_g}{2}\,\mathrm{Rm}^2(\overline{g}(t), \overline{\xi}_{\overline{g}(t)}) \;. 
\end{equation}
 (See (\ref{DToutPer}). Obviously the pull--back in (\ref{DTout1no-}) is not generated by the same $f$ and $\xi_g$ featuring in (\ref{DToutPer})).  
 
 To begin, 
we  remark that along a generic (germ of) curve of metrics $[0,1]\,\ni \,t\,\longmapsto \,g(t)$ with tangent vector $v\in C^\infty(M, \otimes ^2_{sym}T^*M)$
\begin{equation}
\label{tangentv}
\frac{\partial}{\partial t}\,g_{jk}(t)\,=\,v_{jk}\;,
\end{equation}
we have
\begin{equation}
\label{rm2der}
\frac{\partial}{\partial t}\,|\mathrm{Rm}(t)|^2\,=\,-\,4\,\mathrm{R}_{ijkl}\nabla^i \nabla^l v^{jk}\,-\,
2\,\mathrm{R}m^2_{jk}\,v^{jk}\;.
\end{equation} 
We have, from  (\ref{rm2der}) and  (\ref{Perlequation0}),  the pointwise evolution 
\begin{eqnarray}
\label{Perlequation1}
&&\frac{\partial }{\partial t}\,\left[\mathrm{R}^{Per}(g(t))\,+\,\frac{\alpha_g}{8}\,|\mathrm{R}m(t)|^2\right]=\,\nabla ^{(\omega)}_j\nabla ^{(\omega)}_k\,v^{jk}\,-\,\frac{\alpha_g}{2}\,\mathrm{R}_{ijkl}\nabla^i \nabla^l v^{jk}\nonumber\\
\\
&&-\,\left(\mathrm{R}^{BE}_{jk}\, +\,\frac{\alpha_g}{4}\, \mathrm{R}m^2_{jk}  \right)\,v^{jk}\,
+\,2\,\triangle _g^{(\omega)}\,\left(\frac{\partial f}{\partial t}-\frac{1}{2}\,tr_g(v) \right)\nonumber\;.
\end{eqnarray}
Let us recall that along (\ref{tangentv}) we have the relation (\ref{comput1}), hence, 
if  in line with the preservation of the pull-back measure $d\,\overline{\omega}(\beta)$   we assume  the measure preserving condition $\ppt\,d\omega(t)\,=\,0$,  and take into account the integration by parts formula 
\begin{equation}
\int_M\,\mathrm{R}_{ijkl}\nabla^i \nabla^l v^{jk}(t)\,d\omega\,=\,
\int_M\,\nabla^l_{(\omega)}\nabla^i_{(\omega)}\mathrm{R}_{ijkl}\, v^{jk}(t)\,d\omega\;,
\end{equation} 
then we easily get  from (\ref{Perlequation1})

\begin{eqnarray}
&&\frac{d}{dt}\mathcal{F}_{(2)}({g}, {f}\, {\xi})\,=\,-\,\int_M\,\left(\mathrm{R}_{jk}^{BE}({g}(t))\,+\,
\frac{\alpha_g}{4}\,\mathrm{Rm}_{jk}^2({g}(t), \xi_{g(t)})
 \right)\,v^{jk}\,d{\omega}(t)\nonumber\\
 \label{longcomp}\\
 &&-\,\frac{1}{2}\int_M\,g^{ab}(t)\left[\frac{\partial}{\partial t}\mathrm{L}_\xi g_{ab}\,+\,{\alpha_g}\,
{\nabla}_a^{\,(\omega)}{\nabla}_{\,(\omega)}^i\,\mathrm{R}_{ijkb}({g}(t))\,v^{jk}
\right]\,d{\omega}(t)\nonumber\;.
\end{eqnarray}
Since  $\int_M\,g^{ab}\,\Delta_{g(t)}^{(\omega)}\,\mathrm{L}_{\xi(t)} g_{ab}(t)\,d\omega(t)\,=\,0$, we can conveniently rewrite
this expression as 
\begin{eqnarray}
&&\frac{d}{dt}\mathcal{F}_{(2)}({g}, {f},\, {\xi}_g)\,=\,-\,\int_M\,\left(\mathrm{R}_{jk}^{BE}({g}(t))\,+\,
\frac{\alpha_g}{4}\,\mathrm{Rm}_{jk}^2({g}(t), \xi_{g(t)}) 
 \right)\,v^{jk}\,d{\omega}(t)\nonumber\\
 \label{longcomp}\\
 &&-\,\frac{1}{2}\int_M\,g^{ab}(t)\left[\frac{\partial}{\partial t}\mathrm{L}_\xi g_{ab}\,
-\,\Delta_{g(t)}^{(\omega)}\,\mathrm{L}_{\xi(t)} g_{ab}(t)\,
+\,{\alpha_g}\,
{\nabla}_a^{\,(\omega)}{\nabla}_{\,(\omega)}^i\,\mathrm{R}_{ijkb}({g}(t))\,v^{jk}
\right]\,d{\omega}(t)\nonumber\;,
\end{eqnarray}
which directly implies the  following result, where we have set $RG\,:=\,-\,2\mathrm{Ric}^{BE}({g})\,-\,\frac{\alpha_g}{2}\,\mathrm{Rm}^2({g}, \xi_g)$. 
\begin{theorem}[Entropy]  
\label{Entropy}
The coupled DeTurck RG-2 flow $[0, T_0]\,\ni\,t\,\longmapsto \, \left(g(t),\,d\omega(t),\,\xi_{g(t)}\right)$ solution of 
\begin{eqnarray} 
	 \label{DTcoupled}
\frac{\partial }{\partial t }\,{g}_{ij}(t)\,&=&\,-\,
2\mathrm{Ric}^{BE}_{ij}({g}(t ))\,-\,\frac{\alpha_g}{2}\,\mathrm{Rm}^2_{ij}({g}(t), \xi_{g(t)}) \;, \nonumber\\
\frac{\partial}{\partial t}\,d\omega(t)\,&=&\,0\;,\\
\frac{\partial}{\partial t}\mathrm{L}_\xi g_{ab}\,&=&
\,\Delta_{g(t)}^{(\omega)}\,\mathrm{L}_{\xi(t)} g_{ab}(t)\,
-\,{\alpha_g}\,
{\nabla}_a^{\,(\omega)}{\nabla}_{\,(\omega)}^i\,\mathrm{R}_{ijkb}({g}(t))\,RG^{jk}\nonumber\;,
\end{eqnarray}
is the gradient flow of the functional $\mathcal{F}_{(2)}({g}, {f}\, {\xi})$.
\end{theorem}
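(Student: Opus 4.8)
The plan is to obtain the statement directly from the first‑variation identity \eqref{longcomp}, which already records $\tfrac{d}{dt}\mathcal{F}_{(2)}(g,f,\xi)$ along an arbitrary curve of metrics with tangent $v=\partial_t g$, under the single hypothesis $\partial_t d\omega(t)=0$. All of the genuinely computational work has been done before the statement: the pointwise evolution \eqref{Perlequation1} of $\mathrm{R}^{Per}+\tfrac{\alpha_g}{8}|\mathrm{Rm}|^2$, the integration by parts moving $\mathrm{R}_{ijkl}\nabla^i\nabla^l v^{jk}$ onto $\nabla^l_{(\omega)}\nabla^i_{(\omega)}\mathrm{R}_{ijkl}\,v^{jk}$, and the insertion of the weighted‑Laplacian term "for free" using $\int_M g^{ab}\Delta^{(\omega)}_{g(t)}\mathrm{L}_{\xi}g_{ab}\,d\omega=0$. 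So I would simply substitute the three equations of \eqref{DTcoupled} into \eqref{longcomp}, check that the non‑$g$ contributions cancel by design, and identify what survives as $\|\partial_t g\|^2$ in the natural $L^2(d\omega)$ metric, which simultaneously gives monotonicity and exhibits the gradient‑flow structure.

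Carrying this out: the second equation of \eqref{DTcoupled}, $\partial_t d\omega(t)=0$, is precisely the hypothesis under which \eqref{longcomp} was derived. The third equation is, by construction, the unique prescription of $\partial_t\mathrm{L}_{\xi}g_{ab}$ that makes the bracket $[\,\partial_t\mathrm{L}_{\xi}g_{ab}-\Delta^{(\omega)}_{g(t)}\mathrm{L}_{\xi}g_{ab}+\alpha_g\nabla^{(\omega)}_a\nabla^i_{(\omega)}\mathrm{R}_{ijkb}\,RG^{jk}\,]$ appearing in \eqref{longcomp} vanish identically once $v^{jk}=RG^{jk}$; hence the entire second integral of \eqref{longcomp} drops out, leaving $\tfrac{d}{dt}\mathcal{F}_{(2)}=-\int_M(\mathrm{R}^{BE}_{jk}+\tfrac{\alpha_g}{4}\mathrm{Rm}^2_{jk})\,v^{jk}\,d\omega$. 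Since $-(\mathrm{R}^{BE}_{jk}+\tfrac{\alpha_g}{4}\mathrm{Rm}^2_{jk})=\tfrac12 RG_{jk}$ by the definition of $RG$, and $v=RG$ along the first equation of \eqref{DTcoupled}, this equals $\tfrac12\int_M|RG|^2_{g(t)}\,d\omega(t)\ge 0$. Thus $\mathcal{F}_{(2)}$ is non‑decreasing, and — with the $L^2(d\omega)$ inner product on metric variations and the measure slaved by $\partial_t d\omega=0$, exactly as for Perelman's $\mathcal{F}$ on the Ricci flow — the pair $(\partial_t g,\partial_t\mathrm{L}_{\xi}g)$ prescribed by \eqref{DTcoupled} is (up to the conventional factor absorbed into the normalisation of the metric) the gradient of $\mathcal{F}_{(2)}$. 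The structural point worth spelling out is that the curvature‑divergence term $\alpha_g\nabla^{(\omega)}_a\nabla^i_{(\omega)}\mathrm{R}_{ijkb}$, forced into the first variation by the $\tfrac{\alpha_g}{8}|\mathrm{Rm}|^2$ summand of $\mathcal{F}_{(2)}$, is not left in the $g$‑equation — which would spoil its RG‑2 form — but is reshuffled into the $\mathrm{L}_{\xi}g$‑equation by means of the $-\mathrm{div}_{g}\xi$ summand of $\mathcal{F}_{(2)}$; this reshuffling is precisely the nontrivial modification of $\mathcal{F}_{(1)}$ advertised in the text.

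Two further items deserve a remark rather than a computation. First, I would note that \eqref{DTcoupled} is a well‑posed coupled system on a short time interval: the first equation is the weakly‑parabolic DeTurck‑modified RG‑2 flow governed by Theorem~\ref{ThA} under $1+\alpha_g\mathcal{K}_{P}(M,g)>0$, and, its solution in hand, the third equation is a linear heat‑type equation for the symmetric $2$‑tensor $\mathrm{L}_{\xi}g$ with a prescribed curvature source, hence uniquely solvable, the passage back to $\xi=\overline\nabla\overline\psi$ being afforded by \eqref{ottoPDE}. Second, it is worth emphasising that \eqref{DTcoupled} is genuinely distinct from the system of Theorem~\ref{ThBB}: there the auxiliary field obeyed the reaction--diffusion equation \eqref{wabevol} and the (merely monotone) functional $\mathcal{F}_{(1)}$ was not a gradient potential, whereas here the auxiliary field obeys a plain heat equation with a curvature source and $\mathcal{F}_{(2)}$ becomes the gradient potential. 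I expect the only real obstacle to be conceptual — pinning down the configuration‑space inner product and the precise sense in which "gradient flow" is meant; the analytic content is already carried by \eqref{longcomp}.
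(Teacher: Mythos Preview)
Your proposal is correct and follows exactly the paper's approach: the paper simply states that \eqref{longcomp} ``directly implies'' the theorem, and what you have written is precisely the spelling-out of that implication --- substituting \eqref{DTcoupled} into \eqref{longcomp}, observing that the third equation is engineered to annihilate the second integral once $v=RG$, and reading off $\tfrac{d}{dt}\mathcal{F}_{(2)}=\tfrac12\int_M|RG|^2\,d\omega$. Your additional remarks on well-posedness and on the contrast with the system of Theorem~\ref{ThBB} go beyond what the paper provides, but they are accurate and helpful context.
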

Since the term ${\nabla}_a^{\,(\omega)}{\nabla}_{\,(\omega)}^i\,\mathrm{R}_{ijkb}$ in  (\ref{DTcoupled}) gives rise to such a strong non--linear coupling among the $g(t)$ and $\xi_g(t)$ evolution, it would seem difficult to explicitly characterize a diffeomorphism   that pulls back the solution of  (\ref{DTcoupled}) to a standard RG-2 flow.

To better understand the geometric nature of  this latter  remark,  let us introduce Hamilton's Harnack quadric ( see \cite{flowers}, p. 32) 
\begin{eqnarray}
\label{RHarnack}
H_{\nabla f}\,&=&\, e^{f}\,\left(div\circ div\,+\,\mathrm{R}ic\,+\,\nabla \nabla \,f\right)_{1,4}\,\left(e^{\,-f}\,\mathrm{R}m\right)\\
&=&\,e^{f}\,\left(\nabla ^l\,\nabla ^i\,+\,\mathrm{R}^{li}\,+\,\nabla^l\nabla^i\,f\right)
\left(e^{\,-f}\,\mathrm{R}_{ijkl}\right)\;,\nonumber
\end{eqnarray}
where the subscript $(\ldots)_{1,4}$ denote the components of the Riemann tensor on which the operator between brackets is acting. Since
\begin{equation}
e^{f}\,\nabla ^l\,\nabla ^i\,\left(e^{\,-f}\,\mathrm{R}_{ijkl}\right)\,=\,
e^{f}\,\nabla ^l\,\left[e^{\,-f}\,e^{\,f}\,\nabla ^i\,\left(e^{\,-f}\,\mathrm{R}_{ijkl}\right)\right]
=\nabla _{(\omega)}^l\,\nabla _{(\omega)}^i\,\mathrm{R}_{ijkl}\;,
\end{equation}
we can equivalently write (\ref{RHarnack}) as 
\begin{equation}
\label{Harn}
\nabla_{(\omega)} ^l\,\nabla_{(\omega)} ^i\,\mathrm{R}_{ijkl}\,=\,\left(H_{\nabla f}\right)_{jk}\,
-\,(\mathrm{R}^{BE})^{li}\,\mathrm{R}_{ijkl}\;.
\end{equation}
Note that a long but straightforward computation   provides 
\begin{equation}
\label{nicewrite2}
\nabla_{(\omega)}^l \nabla_{(\omega)}^i \mathrm{R}_{ijkl}\,=\,\triangle ^{(\omega)} \mathrm{R}^{BE}_{kj}\,-\,g^{ab}\mathrm{R}^{BE}_{ka}\mathrm{R}^{BE}_{bj}\,+\,\mathrm{R}_{ijkl}(\mathrm{R}^{BE})^{il}\,-\,\frac{1}{2}\,L_{X}g_{kj}\;,
\end{equation}
where $X_h\,:=\,\frac{1}{2}\,\nabla_h\,\mathrm{R}^{Per}$. In particular  we can rewrite the Harnack quadric (\ref{Harn}) as
\begin{equation}
\label{Harnii}
\left(H_{\nabla f}\right)_{jk}\,=\,
\triangle ^{(\omega)} \mathrm{R}^{BE}_{kj}\,-\,g^{ab}\mathrm{R}^{BE}_{ka}\mathrm{R}^{BE}_{bj}\,+\,2\mathrm{R}_{ijkl}(\mathrm{R}^{BE})^{il}\,-\,\frac{1}{2}\,\nabla_j\nabla_k\,\mathrm{R}^{Per}\;,
\end{equation}
which, for $f=0$, reduces to the standard expression
\begin{equation}
\label{MHarnii}
M_{jk}\,:=\,
\triangle \mathrm{R}_{kj}\,-\,\mathrm{R}_{k}^l\mathrm{R}_{lj}\,+\,2\mathrm{R}_{ijkl}\mathrm{R}^{il}\,-\,\frac{1}{2}\,\nabla_j\nabla_k\,\mathrm{R}\;,
\end{equation}
featuring (up to the term $(2t)^{-1}\,\mathrm{R}_{kj}$) in the analysis of Hamilton's Harnack inequality. Note also that
if $(M,g,\,d\omega)$ is a Ricci soliton, \emph{i.e.} if
\\
\begin{equation}
\mathrm{R}^{BE}_{kl}\,:=\mathrm{R}_{kl}\,+\,\nabla_k \nabla_l\,f\, =\,\frac{\epsilon }{2}\,g_{kl}\;, \;\;\;\;\;\epsilon \in\,\mathbb{R}\;,
\end{equation}
we have
\begin{equation}
\nabla_{(\omega)}^l \nabla_{(\omega)}^i \mathrm{R}_{ijkl}\,=\,0\;.
\end{equation}
Indeed,  for a Ricci soliton one easily computes
\begin{eqnarray}
&&\triangle ^{(\omega)} \mathrm{R}^{BE}_{kj}\,-\,g^{ab}\mathrm{R}^{BE}_{ka}\mathrm{R}^{BE}_{bj}\,+\,\mathrm{R}_{ijkl}(\mathrm{R}^{BE})^{il}\nonumber\\
&&=\,-\,\frac{\epsilon^2}{4}\,g_{kj}\,+\,\frac{\epsilon}{2}\,\mathrm{R}_{kj}\,=\,-\,\frac{\epsilon^2}{4}\,g_{kj}\,+\,\frac{\epsilon}{2}\,\left(\frac{\epsilon}{2}\,g_{kj}\,-\,\nabla_k\nabla_j\,f\right)\nonumber\\
&&=\,-\,\frac{\epsilon}{2}\,\nabla_k\nabla_j\,f\;.
\end{eqnarray}
On the other hand, for $\mathrm{R}^{BE}_{kl}\,=\,\frac{\epsilon }{2}\,g_{kl}$ one has
\begin{equation}
X_j\,:=\,\nabla^l_{(\omega)}\,{R}^{BE}_{lj}\,=\,-\,\frac{\epsilon}{2}\,\nabla_j\,f\;,
\end{equation}
so that 
\begin{equation}
\frac{1}{2}\,L_X\,g_{kj}\,=\,-\,\frac{\epsilon}{2}\,\nabla_k\nabla_j\,f\;,
\end{equation}
and 
\begin{eqnarray}
&&\nabla-{(\omega)}^l \nabla_{(\omega)}^i \mathrm{R}_{ijkl}\,=\,
\left[\triangle ^{(\omega)} \mathrm{R}^{BE}_{kj}-\,g^{ab}\mathrm{R}^{BE}_{ka}\mathrm{R}^{BE}_{bj}\right.\\
&&\left.+\,\mathrm{R}_{ijkl}(\mathrm{R}^{BE})^{il}\,
-\,\frac{1}{2}\,L_X\,g_{kj}\,\right]_{\mathrm{R}ic^{BE}\,=\,\frac{\epsilon }{2}\,g}=0\;.\nonumber
\end{eqnarray} 

From these remarks it directly follows that it is the \textit{extended} Harnack  term $\nabla_{(\omega)}^l \nabla_{(\omega)}^i \mathrm{R}_{ijkl}$ that makes the gradient flow nature of the RG--2 flow so complex. This is quite manifest if we set $\xi_g\,=\,0$ in (\ref{longcomp})  to get  
\begin{equation}
\frac{d}{dt}\mathcal{F}_{(2)}({g}, {f})\,=\,-\,\int_M\,\left(\mathrm{R}_{jk}^{BE}({g}(t))\,+\,
\frac{\alpha_g}{4}\,\mathrm{Rm}_{jk}^2({g}(t)) \,-\,\frac{\alpha_g}{2}\,
{\nabla}^l_{\,(\omega)}{\nabla}_{\,(\omega)}^i\,\mathrm{R}_{ijkl}({g}(t))
 \right)\,v^{jk}\,d{\omega}(t)\;,
 \label{longcompbis}
\end{equation}
which is monotonic  along Ricci solitons, and also shows in a rather direct way that it  is the (fourth-order) flow
\begin{equation}
\frac{\partial}{\partial t}\,g_{jk}(t)\,=\,-\,2 \mathrm{R}_{jk}^{BE}({g}(t))\,-\,
\frac{\alpha_g}{2}\,\mathrm{Rm}_{jk}^2({g}(t)) \,+\,{\alpha_g}\,
{\nabla}^l_{\,(\omega)}{\nabla}_{\,(\omega)}^i\,\mathrm{R}_{ijkl}({g}(t))
\end{equation}
that is formally  the gradient flow of the functional  $\mathcal{F}_{(2)}({g}, {f},\,\xi_g)$ for $\xi_g\,=\,0$. It is only by taming the Harnack term ${\nabla}^l_{\,(\omega)}{\nabla}_{\,(\omega)}^i\,\mathrm{R}_{ijkl}({g}(t))$ by introducing the  evolution of   $\xi_g$ provided by the inhomogeneous heat equation
\begin{equation}
\frac{\partial}{\partial t}\mathrm{L}_\xi g_{ab}\,=
\,\Delta_{g(t)}^{(\omega)}\,\mathrm{L}_{\xi(t)} g_{ab}(t)\,
-\,{\alpha_g}\,
{\nabla}_a^{\,(\omega)}{\nabla}_{\,(\omega)}^i\,\mathrm{R}_{ijkb}({g}(t))\,RG_{jk}\;,
\end{equation}
(see (\ref{DTcoupled}))
that one can make manifest the gradient-like nature of the RG-2 flow with respect to $\mathcal{F}_{(2)}({g}, {f},\,\xi_g)$.

\end{document}